\makeatletter\@addtoreset{equation}{section}\makeatother
\makeatletter\@addtoreset{figure}{section}\makeatother
\makeatletter\@addtoreset{table}{section}\makeatother
\newtheorem{theorem}{Theorem}[section]
\newtheorem{prop}[theorem]{Proposition}
\newtheorem{lemma}[theorem]{Lemma}
\newtheorem{claim}[theorem]{Claim}
\newcommand{\R}{{\mathbb R}}
\newcommand{\Z}{{\mathbb Z}}
\newcommand{\N}{{\mathbb N}}
\newcommand{\T}{{\mathbb T}}
\newcommand{\phy}{\varphi}
\newcommand{\trsp}{\raisebox{.6ex}{${\scriptstyle t}$}}
\newcommand{\op}[1]{\!\!\mathop{\rm ~#1}\nolimits}
\newcommand{\DD}{\!\mathop{\rm d\!}\nolimits}
\newcommand{\scriptop}[1]{\!\!\mathop{\mbox{\rm \scriptsize ~#1}}\nolimits}
\newenvironment{remark}{\par\medskip\noindent{\bf
Remark~\thetheorem~~}}{\unskip\nobreak\hfill\hbox{ $\oslash$}\par\bigskip}
\newenvironment{definition}{\refstepcounter{theorem}\par\medskip\noindent{\bf
Definition~\thetheorem~~}}{\unskip\nobreak\hfill\hbox{ $\oslash$}\par\bigskip}
\newcommand{\got}[1]{\mathfrak{#1}}
\title{
Semitoric integrable systems on symplectic $4$\--manifolds}
\author{Alvaro Pelayo\thanks{Partially supported by an NSF Postdoctoral Fellowship} \,\,
and San V\~u Ng\d oc}
\date{}
\begin{document}
\maketitle

\begin{abstract}
   Let $(M,\, \omega)$ be a symplectic $4$\--manifold.
 A semitoric integrable system on $(M,\, \omega)$
 is a pair of smooth functions $J,\,H \in \op{C}^{\infty}(M,\,\R)$ 
  for which
   $J$ generates a Hamiltonian $S^1$\--action
   and the Poisson brackets $\{J,\,H\}$ vanish. 
We shall introduce new global 
  symplectic invariants for these systems; some of
  these invariants
  encode topological or geometric aspects, while others encode
  analytical information about the singularities and
  how they stand with respect to the system.
  Our goal is to prove that a semitoric system is
  completely determined by the invariants we introduce. 
   \end{abstract}


\section{Introduction}

Atiyah \cite[Th.~1]{atiyah} and Guillemin\--Sternberg \cite{gs}
proved that the image $\mu(M)$ under the momentum map 
$
\mu:=(\mu_1,\ldots,\mu_n) \colon M \to \R^n
$
of a Hamiltonian action of an $n$\--dimensional torus
on a compact connected symplectic manifold $(M, \, \omega)$
is a convex polytope, called the \emph{momentum polytope}.  Delzant
\cite{delzant} showed that if the dimension of the torus is half
the dimension of $M$, the momentum polytope, which in this
case is called {\em Delzant polytope}, determines the
isomorphism type of $M$. 
Moreover, he showed that $M$ is 
a toric variety.
These  theorems establish remarkable and deep connections
bewteen Hamiltonian dynamics, symplectic geometry, K\"ahler manifolds
and toric varieties in algebraic geometry. Through the analysis of the
quantization of such systems, one may also mention important
links with the representation theory of Lie groups and Lie algebras,
semiclassics, and microlocal analysis.

Nevertheless, at least from the viewpoint of symplectic geometry, the
situation described by the momentum polytope is very
rigid. There are at least three natural directions for further
mathematical exploration~: (i) replacing the manifold $M$ by an orbifold;
(ii) allowing more general actions than Hamiltonian ones,
(iii) replacing the torus $T$ by a non\--abelian and/or non-compact Lie group $G$.

Following (i) Lerman\--Tolman generalized Delzant's classification to
orbifolds in \cite[Th.~7.4, 8.1]{lermantolman}.
Regarding (i) Pelayo generalized Delzant's result to
the case when $M$ is $4$\--dimensional and $T$ acts symplectically, but
not necessarily Hamiltonianly \cite[Th.~8.2.1]{pelayo}. 
This result relies on the
generalization of Delzant's theorem for symplectic torus actions with
coisotropic principal orbits by Duistermaat\--Pelayo
earlier \cite[Th.~9.4,\,9.6]{DuPe}, and for
symplectic torus actions with symplectic principal orbits 
by Pelayo \cite[Th.~7.4.1]{pelayo}.
Regarding (iii), results for non\--abelian compact Lie groups $G$ are relatively
complete, see Kirwan \cite{kirwan}, Lerman\--Meinrenken\--Tolman\--Woodward \cite{lermanmtw},
Sjamaar \cite{sjamaar} and Guillemin\--Sjamaar \cite{guis}.
When $T$ is replaced by a non\--compact group $G$ the theory
is hard; even in the proper and Hamiltonian case,
the symplectic local normal form
for a proper action requires extensive work, see Marle \cite{marle}
and Guillemin\--Sternberg \cite[Sec. 41]{gsst}; in the non\--Hamiltonian
symplectic case this normal form is recent work of
Benoist \cite[Prop. 1.9]{benoist} and 
Ortega\--Ratiu \cite{ortegaratiu}.

The seemingly most simple non\--compact case to study is that of a
Hamiltonian action of the abelian group $\R^n$ on a $2n$\--dimensional
symplectic manifold. But of course, this is nothing less than the goal of the
theory of integrable systems. The role of the momentum map is in this case
played by a map of the form
$
F:=(f_1,\,\dots,\,f_n) \colon M \to \R^n, 
$
where $f_i \colon M\to\R$ is smooth, the Poisson
brackets $\{f_i,\,f_j\}$ identically vanish on $M$, and the
differentials $\DD{}f_1,\,\dots,\,\DD{}f_n$ are almost\--everywhere
linearly independent.
In this article
we study the case of an
integrable system $f_1:=J,\, f_2:=H$, where $M$ is $4$\--dimensional
and the 
component $J$ generates a Hamiltonian $S^1$\--action: these
are called semitoric.
Semitoric systems form an important class of integrable
systems, commonly found in simple physical models. Indeed, a semitoric
system can be viewed as a Hamiltonian system in the presence of an $S^1$
symmetry~\cite{sadovski-zhilinski}. 
One of the incentives for this work
is that it is much simpler to understand the integrable system on its
whole rather than writing a theory of Hamiltonian systems on
Hamiltonian $S^1$\--manifolds. 

It is well established in the integrable systems community that the
most simple and natural object, which tells much about the structure
of the integrable system under study, is the so\--called
bifurcation diagram. This is nothing but the image in $\R^2$ of $F=(J,\,H)$ or,
more precisely, the set of critical values of $F$. In this article, we
are going to show that the arrangement of such critical values is
indeed important, but other crucial ingredients are needed to understand
$F$, which are
more subtle and cannot be detected from the bifurcation diagram itself.
Our goal is
to construct a collection of
new global symplectic invariants for semitoric integrable
systems which completely determine a semitoric system up 
to isomorphisms. 
We will build on a number of  remarkable results by other authors in
 integrable systems,
  including Arnold, Atiyah, Dufour\--Molino, Eliasson, Duistermaat, Guillemin\--Sternberg, Miranda\--Zung and  V\~u Ng\d oc, to which we shall make references throughout the text, and to whom
  this paper owes much credit.

The paper is structured as follows; 
in Section~\ref{sec:semitoric} we define semitoric systems, explain the conditions
which appear in the definition and announce our main result;
in sections~\ref{sec:analytic},\ref{sec:polytope} and~\ref{sec:geometric} we construct the
new symplectic invariants. Specifically, in Section~\ref{sec:analytic} we study the  analytical invariants, in
Section~\ref{sec:polytope} we study the combinatorial invariants, and in Section~\ref{sec:geometric} we study the geometric
invariants. In Section~\ref{sec:main} we state the aforementioned
theorem, which we prove in Section~\ref{sec:proof}. The paper concludes with a short 
appendix, Section \ref{sec:miranda}, in which we prove
a very slight modification of a result of Miranda\--Zung which
we need earlier.

\section{Semitoric systems}
\label{sec:semitoric}
First we introduce the precise definition of semitoric integrable system.

\begin{definition} \label{semitoricdef} Let $(M, \, \omega)$ be a
  connected symplectic $4$\--dimensional manifold.  A {\em
    semitoric integrable system} on $M$ is an integrable system
  $J, \, H \in \op{C}^{\infty}(M,\, \R)$ for which
  \begin{itemize}
  \item[(1)] the component $J$ is a proper
    momentum map for a Hamiltonian circle action on $M$;
  \item[(2)] the map $F:=(J,\,H):M\to\R^2$ has only non-degenerate
    singularities in the sense of Williamson, without real-hyperbolic
    blocks.
  \end{itemize}
  We also use the terminology {\em $4$\--dimensional semitoric integrable system} to
  refer to the triple $(M,\, \omega,\, (J,\,H))$.
\end{definition}

We recall that the first point in Definition \ref{semitoricdef}
means that the preimage by $J$ of a
compact set is compact in $M$ (which is of course automatic if $M$ is
compact), and the second point means that, whenever $m$ is a critical
point of $F$, there exists a 2 by 2 matrix $B$ such that, if we denote
$\tilde{F}=B\circ F$, one of the following happens, in some local
symplectic coordinates near $m$~:
\begin{itemize}
\item[(1)] $\tilde{F}(x,\,y,\,\xi,\,\eta)=(\eta + \mathcal{O}(\eta^2),\,x^2+\xi^2
  + \mathcal{O}((x,\,\xi)^3))$
\item[(2)] $\op{d}^2_m\tilde{F}(x,\,y,\,\xi,\,\eta)=(x^2+\xi^2,\,y^2+\eta^2)$
\item[(3)] $\op{d}^2_m\tilde{F}(x,\,y,\,\xi,\,\eta)=(x\xi+y\eta,\,x\eta-y\xi)$
\end{itemize}
The first case is called a transversally --- or codimension 1 ---
{\em elliptic singularity}; the second case is an {\em elliptic\--elliptic
singularity}; the last case is a {\em focus\--focus singularity}.

In \cite{vungoc}, V\~u Ng\d{o}c proved a version of the 
Atiyah\--Guillemin\--Sternberg
theorem:
to a $4$\--dimensional semitoric integrable system
one may meaningfully associate a family of convex polygons which
generalizes the momentum polygon that one has in the presence of a
Hamiltonian $2$\--torus action. 
If two such systems are isomorphic, then these two families of polygons
are equal. 

In view of this, a natural goal is to try to understand
whether a 
semitoric integrable system on a symplectic $4$\--manifold could possibly be determined
by this family of polygons; as it turns out this is one of five invariants we associate to such a system.
Precisely, the invariants are the following: 
(i) {\em the number of singularities
    invariant}: an integer counting the number of isolated singularities;
(ii)
  {\em the singularity type invariant}: which classifies
  locally the type of singularity; 
 (iii)
  {\em the polygon invariant}: a family of weighted rational convex
  polygons (generalizing
  the Delzant polygon and which may be viewed as a bifurcation diagram);
(iv)
{\em the volume invariant}: numbers
  measuring volumes of certain submanifolds at the singularities;
(v)
{\em the twisting index invariant}: 
  integers measuring how twisted the system is around singularities.
Our goal in this paper is to prove an integrable system
is completely determined, up to isomorphisms, by these invariants.  In
other words, we shall prove that:
\begin{center}
  \emph{$(M,\, \omega_1,\,(J_1,\,H_1))$ and $(M,\, \omega_2,\, (J_2,\,H_2))$ are isomorphic $\iff$ they
    have the same invariants (i)--(v).}
\end{center}
Here the word \emph{isomorphism} is used in the sense that there
exists a symplectomorphism
$$
\varphi \colon M_1 \to M_2,\,\,\,\,\,\,\, \textup{such that}\,\,\,\,\,
\varphi^*(J_2,\,H_2)=(J_1,\,f(J_1,\,H_1)).
$$
for some smooth function $f$ (see Theorem~\ref{mainthm}).

One could say that (i) and (ii) are analytical invariants, (iii) is a
combinatorial/group\--theoretic invariant, and (iv), (v) are geometric
invariants.  

\section{Analytic invariants of a semitoric system}
\label{sec:analytic}

We describe invariants of a semitoric system encoding analytic
information about the singularities. 
Throughout this section
$(M,\, \omega,\, (J,\,H))$ is a $4$\--dimensional
semitoric integrable system.

\subsection{Cardinality of singular set invariant}
\label{mfsection}

It is clear from the definition that a semitoric integrable system has
only two types of singularities: elliptic (of codimension $0$ or $1$) and
focus\--focus. This can easily be inferred from the bifurcation diagram. 
In fact, V\~u Ng\d oc proves in~\cite[Prop.~2.9, Th.~3.4, Cor.~5.10]{vungoc}
the following statement~: 

\begin{prop} \label{integer:prop}
The semitoric system $(M,\, \omega,\, (J,\,H))$ admits a finite
number $m_f$ of focus\--focus critical values $c_1,\dots,c_{m_f}$, and,
denoting by $B=F(M)$ the image of $F$, where $F=(J,\,H)$:
\begin{itemize}
\item[(a)]
the set of regular values of $F$ is 
$
B_{r}=\op{Int}{B}\setminus\{c_1,\dots,c_{m_f}\};
$
\item[(b)]
the boundary of $B$ consists of all images of elliptic
singularities;
\item[(c)]
the fibers of $F$ are connected.
\end{itemize}
\end{prop}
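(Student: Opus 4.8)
The plan is to treat the three assertions (a), (b), (c) as essentially consequences of the singularity classification in Definition~\ref{semitoricdef} together with the properness of $J$, and to cite V\~u Ng\d oc \cite{vungoc} for the harder connectivity statement. First I would establish finiteness of the focus\--focus values. Since $F=(J,H)$ has only non\--degenerate singularities without hyperbolic blocks, the critical points of $F$ are of elliptic\--elliptic, transversally elliptic, or focus\--focus type; focus\--focus points are isolated in $M$ by the Eliasson normal form (a focus\--focus singularity is a single point in its fiber up to the local model $(x\xi+y\eta,\,x\eta-y\xi)$), and each focus\--focus fiber contains finitely many of them. The key point is that the focus\--focus \emph{values} lie in a fixed horizontal strip: indeed $J$ restricted to a focus\--focus fiber is critical exactly at the singular points, and properness of $J$ forces the set of critical values of $J$ itself to be discrete (the $S^1$\--action has finitely many fixed surfaces/points on each compact piece $J^{-1}([a,b])$); combined with the fact that focus\--focus values project to interior critical values of $J$ one gets that only finitely many $c_i$ occur. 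I would phrase this carefully, since it is the one genuinely global input.

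For (a), the inclusion $B_r \subseteq \op{Int} B$ is clear because near a boundary point of $B$ every fiber meets an elliptic singularity (this is really the content of (b)); conversely, an interior value is regular iff its fiber contains no critical point, and the only interior critical values are the focus\--focus ones, because elliptic\--elliptic values are vertices of $B$ hence on the boundary and transversally elliptic values lie on the boundary curve as well. So (a) and (b) are two sides of the same dichotomy: \emph{every elliptic singularity maps to $\partial B$, and every interior critical value is focus\--focus}. The proof of ``elliptic $\Rightarrow$ boundary'' is local: in the normal form $(\eta+\mathcal O(\eta^2),\,x^2+\xi^2+\mathcal O((x,\xi)^3))$ for a transversally elliptic point, the image of a neighborhood is locally a half\--plane $\{\,\text{second coord}\ge 0\,\}$ in the $\tilde F$ coordinates, so the point is on the boundary; similarly an elliptic\--elliptic point maps to a corner. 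The reverse inclusion ``$\partial B \subseteq$ elliptic images'' uses that over a regular value or a focus\--focus value the fiber is a compact orbit (torus, or pinched torus) whose image under $F$ fills a full neighborhood, so such values are interior. Compactness of fibers here is exactly where properness of $J$ is used again: a fiber of $F$ is contained in a fiber of $J$, which is compact.

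For (c), connectedness of the fibers, I would simply invoke V\~u Ng\d oc \cite[Th.~3.4]{vungoc}, whose argument runs: over the simply connected set of regular values $B_r$ one has a singular affine structure and an action\--angle description, regular fibers are $2$\--tori, and connectedness propagates from a neighborhood of an elliptic value (where the fiber is a point or a circle, trivially connected) across $B_r$ by a Morse\--theoretic/monodromy argument using that $J$ is a proper $S^1$\--momentum map, so its fibers and hence the $F$\--fibers inside them are connected; at a focus\--focus value the fiber is a pinched torus, still connected. The main obstacle in writing this cleanly is (c): giving a self\--contained proof would essentially reproduce the Atiyah\--Guillemin\--Sternberg connectedness machinery adapted to the non\--compact semitoric setting, so I expect to lean on the cited theorem and only sketch why properness of $J$ makes the argument go through; the finiteness claim for $m_f$ is the second most delicate point and deserves an explicit, if short, argument along the lines above.
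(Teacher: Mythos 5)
The paper does not actually prove Proposition~\ref{integer:prop}: it is stated as a result of V\~u Ng\d oc and justified solely by the citation \cite[Prop.~2.9, Th.~3.4, Cor.~5.10]{vungoc}. There is therefore no in-paper argument to compare against; what you have done is sketch a reconstruction of what lies behind that citation, and you yourself fall back on \cite[Th.~3.4]{vungoc} for connectivity (c). That is a legitimate reading of the situation, and your treatment of (a) and (b) via Eliasson normal forms and openness of the local models (half-plane for transversally elliptic, corner for elliptic--elliptic, open map $\bar z w$ for focus--focus) is the standard argument and is essentially correct.

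The one place where your sketch genuinely falls short is the finiteness of $m_f$. Properness of $J$ plus isolation of focus--focus points gives only \emph{local} finiteness: on each compact slab $J^{-1}([a,b])$ there are finitely many fixed points of the $S^1$-action and hence finitely many focus--focus values with $J$-coordinate in $[a,b]$. Since $M$ is not assumed compact and $J(M)$ may be all of $\R$, this does not by itself bound the total number of focus--focus values; one could a priori have a sequence $c_i$ with $J(c_i)\to\infty$. Your phrase ``one gets that only finitely many $c_i$ occur'' therefore overstates what the local argument yields. To close the gap you would either need a compactness/boundedness hypothesis on $J(M)$, or a global input from the structure theory in \cite{vungoc} (e.g.\ the assertion there that the set of critical values of $J$, equivalently the image of the $S^1$-fixed-point set, is finite). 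You correctly flag that this is ``the one genuinely global input''; the honest move, paralleling what the paper does, is to cite V\~u Ng\d oc for this step as well rather than suggest it follows from properness alone.
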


Of course $m_f$ is an invariant of the singular foliation induced by
$F$, where $m_f$ and $F$ are as in Proposition \ref{integer:prop}.
Since this foliation is preserved by isomorphism, we have the
following result.

\begin{lemma} \label{i)}
Let $(M_1, \, \omega_1, (J_1,\,H_1)),\, (M_2, \, \omega_2, (J_2,\,H_2))$ be
isomorphic
$4$\--dimensional semitoric integrable systems and let $m_f^i$ be the number
of focus\--focus points of $(M_i, \, \omega_i, (J_i,\,H_i))$, where $i \in \{1,\,2\}$. Then $m^1_f=m^2_f$.
\end{lemma}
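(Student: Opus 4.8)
The plan is to show that the integer $m_f$ is invariant under the isomorphism relation defined in Section~\ref{sec:semitoric}, by arguing that focus\--focus critical values are intrinsically characterized by the singular foliation, which an isomorphism preserves. Recall that an isomorphism between $(M_1,\omega_1,(J_1,H_1))$ and $(M_2,\omega_2,(J_2,H_2))$ is a symplectomorphism $\varphi\colon M_1\to M_2$ with $\varphi^*(J_2,H_2)=(J_1,f(J_1,H_1))$ for some smooth $f$; in particular $\varphi$ carries the (unparametrized) singular Lagrangian foliation $\mathcal{F}_1$ defined by the level sets of $F_1=(J_1,H_1)$ onto the corresponding foliation $\mathcal{F}_2$ of $F_2=(J_2,H_2)$, and it sends critical points of $F_1$ to critical points of $F_2$. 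Moreover, since the change of coordinates $(j,h)\mapsto(j,f(j,h))$ on the base $\R^2$ is a local diffeomorphism of $B_1$ onto $B_2$ that respects the $J$\--direction, it maps each Williamson type to itself: elliptic\--elliptic points have compact (point) fibers, transversally elliptic points lie over the boundary by Proposition~\ref{integer:prop}(b), and focus\--focus points are exactly the isolated singularities of the foliation lying in the interior of $B$, with a pinched\--torus (multi\--pinched) singular fiber. Thus the focus\--focus points are distinguished from the others by a foliation\--theoretic property.

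Concretely, I would proceed in the following steps. First, record that $\varphi$ maps the set $\mathrm{Crit}(F_1)$ bijectively onto $\mathrm{Crit}(F_2)$: this follows from $F_1=g\circ(\varphi^*(J_2,H_2))$ where... more precisely $(J_1,H_1)=(J_1,f(J_1,H_1))$ composed with the inverse base change, so the critical points coincide because a local diffeomorphism on the base does not alter where the differential of the map drops rank. Second, observe that $\varphi$ intertwines the singular foliations, hence sends a focus\--focus critical point of $F_1$ (a point with a local normal form of type~(3), equivalently an isolated critical point in $\mathrm{Int}(B_1)$ whose fiber is singular and which is not elliptic\--elliptic) to a critical point of $F_2$ with the same local topological and foliated structure, which by the trichotomy of Definition~\ref{semitoricdef} and Proposition~\ref{integer:prop} must again be focus\--focus. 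Third, conclude that $\varphi$ induces a bijection between the focus\--focus points of the two systems, so $m_f^1=m_f^2$. By Proposition~\ref{integer:prop} the number of focus\--focus \emph{critical values} equals the number of focus\--focus \emph{points} (each focus\--focus fiber contains exactly one such point, as the systems have simple focus\--focus singularities in the sense used there), so the count is well defined either way.

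The only mildly delicate point — and the one I would be most careful about — is verifying that the base change $(j,h)\mapsto(j,f(j,h))$ genuinely preserves the Williamson type of each singularity, i.e.\ that applying a matrix $B$ to $\tilde F=B\circ F$ commutes appropriately with the allowed reparametrization so that a focus\--focus block cannot be turned into an elliptic block or vice versa. This is immediate from the local normal forms: the reparametrization fixes the first coordinate (it is $J$ in both systems) and acts as an orientation\--preserving-or-not diffeomorphism in the second, and none of these operations changes the rank\--drop pattern or the sign signature encoded in the quadratic parts $\mathrm{d}^2_m\tilde F$ that distinguish cases (1)--(3). Alternatively, and more cleanly, one can bypass the normal form entirely by noting that focus\--focus points are precisely the isolated points of $\mathrm{Crit}(F)$ lying over $\mathrm{Int}(B)$ (combining (a) and (b) of Proposition~\ref{integer:prop} with the fact that elliptic\--elliptic points are the isolated ones on $\partial B$), a characterization manifestly preserved by $\varphi$ together with the base diffeomorphism. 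I expect this observation to make the whole argument short; there is no real obstacle beyond bookkeeping.
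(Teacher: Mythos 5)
Your proposal is correct and follows the same line as the paper, which simply observes that $m_f$ is an invariant of the singular foliation and that the foliation is preserved by isomorphism; you have merely spelled out the bookkeeping (critical points correspond under $\varphi$, the base reparametrization $(j,h)\mapsto(j,f(j,h))$ preserves Williamson type, and focus\--focus values are intrinsically the isolated singular values in $\mathrm{Int}(B)$ by Proposition~\ref{integer:prop}). No gap; your expansion is a faithful unpacking of the paper's one\--line justification.
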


One may argue that $m_f$ is a combinatorial invariant, since it is an integer;
we have put it in this section because we
need it for the construction of the true analytic invariant of the system,
defined in Section \ref{focussection}: the singularity type invariant.

\subsection{Singularity type invariant}
\label{focussection}

Let $F$, $m_f$ and $c_1,\ldots,c_{m_f}$ be as in Proposition \ref{integer:prop}
We consider here the preimage by $F$ of a focus\--focus critical value
$c_i$, where $i \in \{1,\,\ldots,\,m_f\}$.
Throughout, we will assume that the critical fiber
$$
\mathcal{F}_m:=F^{-1}(c_i)
$$
 contains only one critical point $m$. According to
Zung~\cite{ntz1}, this is a genericity assumption.

\begin{figure}[htb]
\begin{center}
\includegraphics{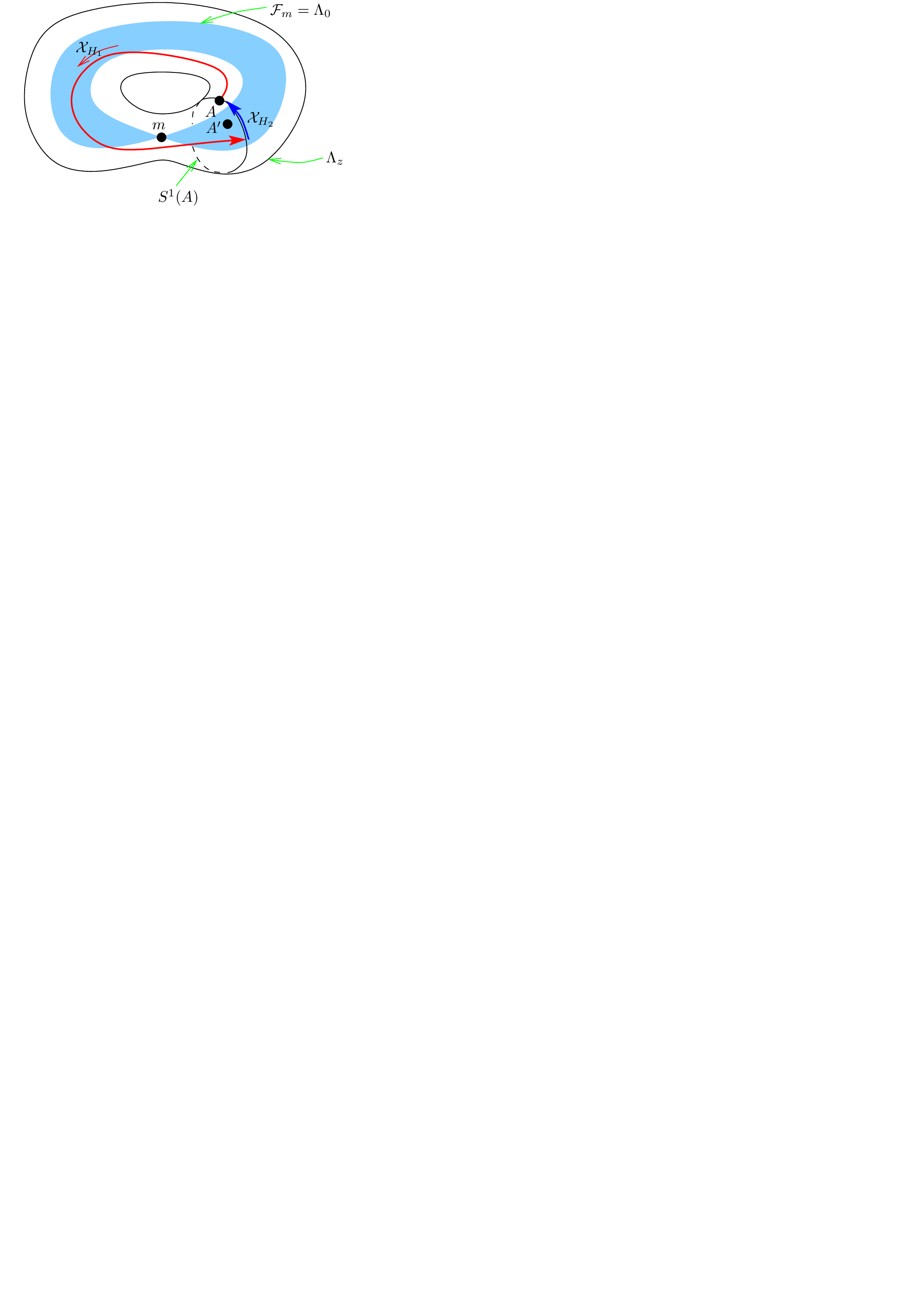}
\caption{The singular foliation $\mathcal{F}$ associated to $F$ near the singular leaf $\mathcal{F}_m$,
where $S^1(A)$ denotes the $S^1$\--orbit for the $S^1$\--action generated by $H_2$.}
\end{center}
\label{AF4}
\end{figure}

Let $\mathcal{F}$ denote the associated singular foliation.
Eliasson's theorem \cite{eliasson-these} describes a neighborhood $U$
of a focus\--focus point $m$ in a singular foliation of focus\--focus
type: there exist symplectic coordinates $(x,\, y,\, \xi,\,\eta)$ in
$U$ in which the map $(q_1,\,q_2)$, given by
\begin{equation}
  q_1=x\xi+y\eta,\qquad q_2=x\eta-y\xi, \label{equ:cartan}
\end{equation}
is a momentum map for the foliation $\mathcal{F}$; here
the critical point $m$ corresponds to coordinates $(0,\,0,\,0,\,0)$.
Let us fix a point
$A'\in \mathcal{F}_m\cap (U\setminus\{m\})$, let $\Sigma$ denote a small
2\--dimensional surface transversal to $\mathcal{F}$ at the point $A'$,
and let $\Omega$ be the open neighborhood of the leaf $\mathcal{F}_m$
which consists of the leaves which intersect the surface $\Sigma$.

Since the Liouville foliation in a small neighborhood of $\Sigma$ is
regular for both momentum maps $F$ and $q=(q_1,\,q_2)$, there must be
a local diffeomorphism $\varphi$ of $\R^2$ such that $q=\varphi \circ
{F}$, and hence we can define a global momentum map $\Phi=\varphi
\circ{F}$ for the foliation, which agrees with $q$ on $U$.
Write $\Phi:=(H_1,\,H_2)$ and $\Lambda_z:=\Phi^{-1}(z)$.
Note that
$
\Lambda_0=\mathcal{F}_m.
$ 
It follows from~(\ref{equ:cartan}) that near $m$
the $H_2$\--orbits must be periodic of primitive period $2\pi$ for any
point in a (non-trivial) trajectory of $\mathcal{X}_{H_1}$.

Suppose that $A \in\Lambda_z$ for some
regular value $z$.
We define $\tau_1(z)$, which is a strictly positive number, as the
time it takes the Hamiltonian flow associated to $H_1$ leaving from
$A$ to meet the Hamiltonian flow associated to $H_2$ which passes
through $A$, and let $\tau_2(z)\in\R/2\pi\Z$ the time that it takes
to go from this intersection point back to $A$, hence closing the
trajectory. Write $z=(z_1,\,z_2)=z_1+\op{i}z_2$, and let $\op{ln} z$
for a fixed determination of the logarithmic function on the complex
plane. We moreover define the following two functions:
\begin{equation}
  \left\{
    \begin{array}{ccl}
      \sigma_1(z) & = & \tau_1(z)+\Re(\op{ln} z) \\
      \sigma_2(z) & = & \tau_2(z)-\Im(\op{ln} z),
    \end{array}
  \right.
  \label{equ:sigma}
\end{equation}
where $\Re$ and $\Im$ respectively stand for the real
an imaginary parts of a complex number.
In his article \cite[Prop.\,3.1]{vungoc0}, V\~u Ng\d oc proved that
$\sigma_1$ and $\sigma_2$ extend to smooth and single\--valued
functions in a neighbourhood of $0$ and that the differential 1\--form
\begin{eqnarray} \label{sigma}
\sigma:=\sigma_1\, \DD{}z_1+\sigma_2\, \DD{}z_2
\end{eqnarray}
is closed.
Notice that if follows from the smoothness of $\sigma_2$
that one may choose the lift of $\tau_2$ to $\R$ such that
$\sigma_2(0)\in[0,\,2\pi)$. This is the convention used throughout.

\begin{definition}\cite[Def.~3.1]{vungoc0} \label{ffinv}
  Let $S_i$ be the unique smooth function defined around
  $0\in\R^2$ such that
  \begin{eqnarray} \label{xx} \left\{ \begin{array} {rl}
        \DD{}S_i=\sigma \,\,\\
        S_i(0)=0
      \end{array} \right.,
  \end{eqnarray}
  where $\sigma$ is the one\--form given by (\ref{sigma}).
  The Taylor expansion of $S$ at $(0,\,0)$ is denoted by $(S)^\infty$.
  We say that $(S_i)^\infty$ is the {\em Taylor series invariant
  of $(M,\, \omega,\, (J,\,H))$ at the focus\--focus point $c_i$},
  where $i \in \{1,\,\ldots,\,m_f\}$.
\end{definition}

The Taylor expansion $(S)^{\infty}$ is a formal power series in two
variables with vanishing constant term.  

\begin{lemma} \label{ii)}
Let $(M_1, \, \omega_1, (J_1,\,H_1)),\, (M_2, \, \omega_2, (J_2,\,H_2))$ be
isomorphic $4$\--dimensional
semitoric integrable systems and let $((S^j_i)^{\infty})_{i=1}^{m^i_f}$ be the tuple of Taylor
series invariants at the focus\--focus critical points of $(M_j, \, \omega_j, (J_j,\,H_j))$, where
$j \in \{1,\,2\}$. Then 
the tuple $((S^1_i)^{\infty})_{i=1}^{m^1_f}$ is equal to the tuple $((S^2_i)^{\infty})_{i=1}^{m^2_f}$.
\end{lemma}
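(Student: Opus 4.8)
The plan is to show that every ingredient entering V\~u Ng\d oc's construction of $(S_i)^\infty$ is natural under a semitoric isomorphism, and then to invoke the fact --- already contained in \cite{vungoc0} --- that $(S_i)^\infty$ does not depend on the auxiliary choices made in Definition \ref{ffinv}. First I would record that an isomorphism $\varphi\colon M_1\to M_2$, that is, a symplectomorphism with $\varphi^*(J_2,H_2)=(J_1,f(J_1,H_1))$, intertwines all the relevant structures: since $(j,h)\mapsto(j,f(j,h))$ is a local diffeomorphism near each regular value of $F_1$, the pair $(J_1,f(J_1,H_1))$ and $F_1=(J_1,H_1)$ induce the same singular Liouville foliation, so $\varphi$ maps the foliation $\mathcal{F}^1$ of $F_1$ onto the foliation $\mathcal{F}^2$ of $F_2$, sending focus\--focus leaves to focus\--focus leaves, and it intertwines the Hamiltonian $S^1$\--actions generated by $J_1$ and $J_2$ because $\varphi^*J_2=J_1$. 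In particular $\varphi$ induces a bijection between the focus\--focus leaves of the two systems (reproving Lemma \ref{i)}), and it suffices to fix a focus\--focus point $m^1$ of $M_1$, put $m^2:=\varphi(m^1)$, and prove equality of the Taylor series invariants at $m^1$ and $m^2$.

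Next I would transport the defining data from $M_2$ to $M_1$. Choose on the $M_2$ side all ingredients of Definition \ref{ffinv} at $m^2$: Eliasson symplectic coordinates near $m^2$ with associated momentum map $q^2$ of the Cartan form (\ref{equ:cartan}); a global momentum map $\Phi^2=(H_1^2,H_2^2)$ for $\mathcal{F}^2$ agreeing with $q^2$ near $m^2$; a transversal point $A'$; a determination of $\op{ln}$; and the normalization $\sigma_2(0)\in[0,2\pi)$. Pulling everything back along the symplectomorphism $\varphi$, the functions $\varphi^*q^2$ again have the form (\ref{equ:cartan}) in the symplectic coordinates obtained from the Eliasson ones by $\varphi$, hence form a valid Eliasson momentum map near $m^1$; and $\Phi^1:=(H_1^1,H_2^1):=\Phi^2\circ\varphi$ is a global momentum map for $\mathcal{F}^1$ agreeing with $\varphi^*q^2$ near $m^1$. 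Because $\varphi^*J_2=J_1$, any compatibility of $\Phi^2$ with the $S^1$\--action generated by $J_2$ is also transported to $\Phi^1$. Thus $\Phi^1$, the point $\varphi(A')$, the same determination of $\op{ln}$, and the induced lift of $\tau_2$ constitute an admissible choice of data for computing $(S_i^1)^\infty$ at $m^1$ in the sense of \cite{vungoc0}.

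It then remains to compare the two computations. Since $H_i^1=H_i^2\circ\varphi$ and $\varphi$ is a symplectomorphism, $\varphi_*\mathcal{X}_{H_i^1}=\mathcal{X}_{H_i^2}$, so $\varphi$ conjugates the Hamiltonian flows of $H_1^1,H_2^1$ to those of $H_1^2,H_2^2$ while preserving the time parameter; moreover $\varphi$ carries the regular leaf $\Lambda_z^1=(\Phi^1)^{-1}(z)$ onto $\Lambda_z^2=(\Phi^2)^{-1}(z)$ for every $z$ near $0$, and $A\in\Lambda_z^1$ gives $\varphi(A)\in\Lambda_z^2$. Applying $\varphi$ to the $\mathcal{X}_{H_1^1}$\--trajectory issued from $A$ and to the $\mathcal{X}_{H_2^1}$\--trajectory through $A$ shows that the first return times are unchanged: $\tau_1^1(z)=\tau_1^2(z)$ and $\tau_2^1(z)=\tau_2^2(z)$ for all regular $z$ near $0$ (with the same normalization of $\tau_2$). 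Hence $\sigma_1^1=\sigma_1^2$ and $\sigma_2^1=\sigma_2^2$, so the closed one\--forms $\sigma^1,\sigma^2$ of (\ref{sigma}) coincide, so do the primitives $S^1,S^2$ fixed by (\ref{xx}), and in particular $(S^1)^\infty=(S^2)^\infty$. Finally, by \cite{vungoc0} the series $(S_i^j)^\infty$ is independent of all the auxiliary choices, so the equality just obtained for the particular choice $\Phi^1=\Phi^2\circ\varphi$ is the equality of the invariants at $m^1$ and $m^2$; running over the bijection of focus\--focus leaves yields the asserted equality of tuples.

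I expect the main obstacle to be conceptual bookkeeping rather than computation: one must check that a semitoric isomorphism genuinely intertwines \emph{every} structure entering V\~u Ng\d oc's construction --- the singular foliation, the $S^1$\--action, the germ of the Eliasson normal form, and the orientation and periodicity conventions fixed before Definition \ref{ffinv} --- so that the pulled\--back data on $M_1$ is admissible. Once this naturality is established, invariance follows at once from the well\--definedness of $(S_i)^\infty$, which we are quoting from \cite{vungoc0}.
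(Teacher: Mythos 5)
Your proposal is correct and gives the argument that the paper itself omits: the paper's ``proof'' of this lemma is the single line ``This result was proven in \cite{vungoc},'' with no further detail. You instead supply the expected naturality argument, quoting from \cite{vungoc0} only the well-definedness of $(S_i)^\infty$ modulo the auxiliary choices in Definition~\ref{ffinv}, and then transporting all of those choices through the symplectomorphism $\varphi$.

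The mechanism you use is the right one: $\varphi$ preserves the symplectic form and pulls back $(J_2,H_2)$ to $(J_1,f(J_1,H_1))$, so it conjugates Hamiltonian vector fields, carries the singular foliation $\mathcal{F}^1$ onto $\mathcal{F}^2$, sends focus\--focus fibres to focus\--focus fibres, and intertwines the circle actions. Pushing the Eliasson coordinates, the transversal, the momentum map $\Phi^2$, the determination of $\ln$, and the lift of $\tau_2$ through $\varphi$ produces admissible data on the $M_1$ side; since $\varphi$ conjugates the flows of $H_i^1$ and $H_i^2$ and identifies the leaves $\Lambda_z^1$ with $\Lambda_z^2$, the return times $\tau_1,\tau_2$, hence $\sigma_1,\sigma_2$, the one\--form $\sigma$, and the primitive $S$ agree. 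Invoking independence from choices then yields the equality of the tuples. This is sound, and I would only flag one point worth making explicit: you need $(j,h)\mapsto(j,f(j,h))$ to be a fibrewise diffeomorphism (equivalently $\partial f/\partial h$ nowhere vanishing, with the customary positivity convention) for the leaves of $F_1$ and of $(J_1,f(J_1,H_1))$ to coincide and for the sign conventions in Eliasson's normal form to be transported coherently; this is implicit in the paper's notion of isomorphism and is exactly what makes your ``admissibility of pulled\--back data'' step go through, but it deserves one sentence rather than being left tacit.
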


This result was proven
in \cite{vungoc}.

\section{Combinatorial invariants of a semitoric system}
\label{sec:polytope}

The Atiyah\--Guillemin\--Sternberg and Delzant
theorems tell us that a lot of the information of some completely integrable
systems {\em coming from Hamiltonian torus actions} is encoded
combinatorially by polytopes.

\begin{figure}[htb]
\begin{center}
\includegraphics{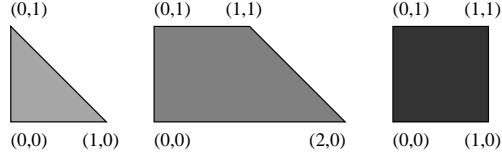}
\caption{Momentum polytope of $\mathbb{CP}^2$ (left),
a Hirzebruch surface (center) and $(\mathbb{CP}^1)^2$ (right), all
of which determine the isomorphism type of the manifold.}
\end{center}
\label{AF4}
\end{figure}

Although $4$\--dimensional semitoric systems are
not induced by torus actions, some of the information of
the system may be combinatorially encoded by
 a certain equivalence class of rational convex polygons
endowed with a collection of vertical weighted lines.
This is in fact a way of encoding the \emph{affine
 structure} induced by the integrable system.
 Throughout this section
 $(M,\, \omega,\, (J,\,H))$ is a $4$\--dimensional semitoric integrable system
 with $m_f$ isolated focus\--focus singular values.

\subsection{Affine Structures}
Recall that a map $X \subset \mathbb{R}^m \to Y \subset \mathbb{R}^m$
is \emph{integral\--affine on $X$} if it is of the form $
A_{ij}(\cdot)+b_{ij}, $ where $A_{ij} \in \textup{GL}(m,\,
\mathbb{Z})$ and $b_{ij} \in \mathbb{R}^m$.

An \emph{integral\--affine smooth $m$\--dimensional manifold} is a
smooth $m$\--dimensional manifold $X$ for which the coordinate changes
are integral\--affine, i.e. if $\varphi_i \colon U_i \subset
\mathbb{R}^m \to X$ are the charts associated to $X$, for all $i,\, j$
we have that $\varphi_i \circ \varphi_j^{-1}$, whereever defined, is
an integral affine map. We allow manifolds with boundary and corners,
in which case the charts take their values in $[0,+\infty)^k\times
\R^{m-k}$ for some integer $k\in\{0,\dots,m\}$.

A map $f \colon X \to Y$ between integral affine manifolds is
\emph{integral\--affine} if for each point $x \in X$ there are charts
$\varphi_x \colon U_x \to X$ around $x$ and $\psi_y \colon V_y \to Y$
around $y:=f(x)$ such that $\psi_y^{-1} \circ f \circ \varphi_x$ is
integral\--affine.

Any Lagrangian fibration $F:M\to B$ naturally defines an
integral-affine structure on the base $B$. This affine structure can
be characterized by the following fact~: a local diffeomorphism
$g:(B,\,b)\to(\R^n,\,0)$ is integral-affine if and only if the Hamiltonian
flows of the $n$ coordinate functions of $g\circ F$ are periodic of
primitive period equal to $2\pi$. Thus, an integrable system with
momentum map $F=(J,\,H)$ defines an integral-affine structure on the set
$B_r$ of regular values of $F$. In our case, this structure will in
fact extend to the boundary of $B_r$. Although $B_r$ is a subset of
$\R^2$, the integral\--affine structure of $B_r$ is in general different
from the induced canonical integral-affine structure of $\R^2$.

The integral\--affine structure of $B_r$ encodes much of the topology of
the integrable system (see~\cite{ntz1}) but, as we will see, is far
from encoding all its symplectic geometry.

\subsection{Generalized toric map}

We start with two definitions that we shall need.
Let $\mathfrak{I}$ be the subgroup of the affine group
$\op{Aff}(2,\,\Z)$ in dimension 2 of those transformations which leave
a vertical line invariant, or equivalently, an element of
$\mathfrak{I}$ is a vertical translation composed with a matrix $T^k$, where $k \in \Z$ and
\begin{eqnarray}
  T^k:=\left(
    \begin{array}{cc}
      1 & 0\\ k & 1
    \end{array}
  \right) \in \op{GL}(2,\, \Z).
  \label{equ:Tk}
\end{eqnarray}
Let $\ell\subset\R^2$ be a vertical line in the plane, not
necessarily through the origin, which splits it into two
half\--spaces, and let $n\in\Z$. Fix an origin in $\ell$.  Let
$\op{t}^n_{\ell} \colon \R^2 \to \R^2$ be the identity on the
left half\--space, and $T^n$ on the right half\--space. By definition
$\op{t}^n_{\ell}$ is piecewise affine.
Let $\ell_i$ be a vertical line through the focus\--focus
value $c_i=(x_i,\,y_i)$, where $1 \le i \le m_f$, and for any tuple $\vec
n:=(n_1,\,\dots,\,n_{m_f})\in\Z^{m_f}$ we set
\begin{eqnarray} \label{tn:eq}
\op{t}_{\vec n}:=\op{t}^{n_1}_{\ell_1}\circ\, \cdots\, \circ
\op{t}^{n_{m_f}}_{\ell_{m_f}}.
\end{eqnarray}
The map $t_{\vec n}$ is piecewise affine.

In \cite[Th.~3.8]{vungoc}
V\~u Ng\d oc describes how to associate to $(M,\, \omega, \,
F=(J, \, H))$ a rational convex polygon: the image
of a certain almost everywhere integral\--affine homeomorphism
$f \colon F(M) \subset  \R^2 \to
  \Delta \subset \R^2$.
Here, $B:=F(M)$ is equipped with the natural
integral\--affine structure induced by the system, while $\R^2$ on the
right hand-side is endowed with its canonical integral-affine
structure.  

Given a sign $\epsilon_i\in\{-1,+1\}$, let $\ell_i^{\epsilon_i}\subset\ell_i$ be
the vertical half line starting at $c_i$ at extending in the direction
of $\epsilon_i$~: upwards if $\epsilon_i=1$, downwards if
$\epsilon_i=-1$. Let
\[
\ell^{\vec\epsilon}:= \bigcup_{i=1}^{m_f}\ell_i^{\epsilon_i}.
\]
\begin{theorem}[Th.~3.8 in \cite{vungoc}] \label{keytheorem}
  \label{thm3.8advances}
  For $\vec\epsilon\in\{-1,+1\}^{m_f}$ there is a
  homeomorphism $f = f_\epsilon\colon B \to \R^2$
  such that
  \begin{enumerate}
  \item[(1)] $f|_{(B\setminus \ell^{\vec\epsilon})}$ is a
    diffeomorphism into its image $\Delta:=f(B)$.
  \item[(2)] $f|_{(B_r\setminus \ell^{\vec\epsilon})}$ is affine: it
    sends the integral affine structure of $B_r$ to the standard
    structure of $\R^2$.
  \item[(3)] $f$ preserves $J$: i.e. $f(x,\,y)=(x,\,f^{(2)}(x,\,y))$.
  \item[(4)]
  For any $i\in \{1,\,\ldots,\,m_f\}$ and any $c \in \ell_i^{\epsilon_i}\setminus\{c_i\}$ there is 
  an open ball $D$ around $c$ such that 
 $f|_{(B_r\setminus l^{\vec\epsilon})}$
has a smooth extension on each domain 
$\{(x,\, y) \in D \, \mid \, \le x_i\}$ and $\{(x,\,y) \in D\, \mid \,  x \ge x_i\}$. One 
has the formula: 
       $$\lim_{\substack{(x,y)\to c\\x<x_i}}\op{d}f(x,\,y) =
      T^{k(c)}\lim_{\substack{(x,y)\to
          c\\x>x_i}}\op{d}f(x,\,y),
      $$
    where $k(c)$ is the multiplicity of $c$.
  \item[(5)] The image of $f$ is a rational convex polygon.
  \end{enumerate}
  Such an $f$ is unique modulo a left composition by a transformation
  in $\mathfrak{I}$. 
\end{theorem}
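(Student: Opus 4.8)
The plan is to construct $f$ as the (integral\--affine) developing map of the affine structure that the system induces on the set $B_r$ of regular values, after cutting $B$ open along $\ell^{\vec\epsilon}$, and then to show that this developing map extends to a homeomorphism of $B$ onto a convex rational polygon. The starting observation is that the integral covectors of the affine structure on $B_r$ are exactly the closed $1$\--forms whose local primitives generate a flow of primitive period $2\pi$; since $J$ generates an $S^1$\--action, $\DD{}J$ is such a covector at every regular value, so $J$ is an affine function in every integral\--affine chart. Fixing the orientation of $B_r$ (coming from $\omega$) and the sign of the $S^1$\--action, one may then normalize charts so that $J$ is literally the first coordinate. This normalization will produce property~(3) and will cut the residual ambiguity of the construction down to those elements of $\op{Aff}(2,\,\Z)$ that preserve $J$ (i.e.\ have the form $(x,y)\mapsto(x,\cdot)$) and the orientation, which is precisely the group $\mathfrak{I}$ of a vertical translation composed with some $T^{k}$.

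Next one computes the linear holonomy of this affine structure around each focus\--focus value $c_i$: by V\~u Ng\d oc's description of the focus\--focus affine singularity in \cite{vungoc0} (equivalently, by Zung's monodromy results \cite{ntz1}) it is conjugate to $T^{1}$, and in the $J$\--adapted basis it equals $T^{1}$. Hence the holonomy of $B_r$ is generated by the $m_f$ small loops around the $c_i$. Now $B\setminus\ell^{\vec\epsilon}$ is simply connected: $B$ itself is --- because each vertical slice $\{J=x\}\cap B$ is the continuous image of the level set $J^{-1}(x)$, which is connected by Atiyah's connectedness theorem \cite{atiyah} ($J$ being a proper $S^1$\--momentum map), so $B$ is a region of the form $\{a(x)\le y\le b(x)\}$ over an interval --- and each $\ell_i^{\epsilon_i}$ is an arc joining the interior point $c_i$ to $\partial B$, so removing these disjoint arcs preserves simple connectedness. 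Consequently the affine structure on $B_r\setminus\ell^{\vec\epsilon}$ has a developing map respecting the integral lattices, unique up to post\--composition by $\op{Aff}(2,\,\Z)$; the $J$\--normalization above pins it down up to $\mathfrak{I}$ and yields $f$ on $B_r\setminus\ell^{\vec\epsilon}$ with properties~(1), (2) and the asserted uniqueness.

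It remains to extend $f$ continuously to $\ell^{\vec\epsilon}$ and to $\partial B$. At a point $c\in\ell_i^{\epsilon_i}\setminus\{c_i\}$ the affine structure is regular, so the one\--sided developments of $f$ are smooth and, by the holonomy computation, differ by $T^{k(c)}$ with $k(c)$ the multiplicity of $c$ (the number of cut half\--lines through $c$); choosing global constants consistently makes $f$ continuous across each cut, which is property~(4). At $c_i$ itself the explicit form of the focus\--focus affine singularity --- carried by the functions $\sigma_1,\sigma_2$ of Definition~\ref{ffinv} --- shows that the two one\--sided limits of $f$ agree, so $f$ extends continuously, collapsing $c_i$ to a point and sending $\ell_i^{\epsilon_i}$ to a vertical segment. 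Near $\partial B$ one uses Eliasson's normal form \cite{eliasson-these}: a transversally elliptic value has a toric local model in which the affine structure is that of a half\--plane bounded by an integral\--affine line, and an elliptic\--elliptic value has the local model of a convex (Delzant) corner; hence $f$ extends to a homeomorphism near $\partial B$ carrying $\partial B$ to a piecewise\--linear curve with rational (possibly vertical) edges and vertices only over elliptic\--elliptic values. Since the cut segments are vertical, this already gives a homeomorphism $f\colon B\to\Delta:=f(B)$ whose boundary is a rational polygon, i.e.\ all of~(5) except convexity.

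The substantive point is convexity of $\Delta$. Because $f$ preserves $J$, every vertical slice $\Delta\cap\{x=\textup{const}\}$ is the image of the corresponding slice of $B$, hence a compact interval by Proposition~\ref{integer:prop}; it is therefore enough to show that the upper boundary function $x\mapsto\max\{y:(x,y)\in\Delta\}$ is concave and the lower one convex. This is a local\--to\--global argument of Atiyah\--Guillemin\--Sternberg type: these boundary functions are locally concave/convex over interior regular values trivially, over boundary values by the toric local models just described (a half\--plane or a convex corner), and over points of the cut lines because the focus\--focus holonomy $T^{1}$, with the orientation convention fixed at the outset, bends the developed boundary towards the convex side independently of the sign $\epsilon_i$. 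Local concavity/convexity together with connectedness of the slices then forces it globally, which finishes the proof. I expect this convexity step to be the main obstacle --- especially checking that the three kinds of local behaviour (regular interior, elliptic boundary, focus\--focus cut) patch together convexly and that the monodromy sign always cooperates; the remaining steps are either soft (developing a flat structure over a simply connected region) or reduce to the known local normal forms of Eliasson and V\~u Ng\d oc.
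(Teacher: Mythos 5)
The paper does not prove Theorem~\ref{keytheorem}: it is quoted verbatim from \cite{vungoc} (V\~u Ng\d oc, \emph{Adv.\ Math.} 2007), and only the statement plus a discussion of the choices (a)--(b) appear here. Your attempt must therefore be weighed against the published proof rather than anything in this text.

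That said, your plan is structurally the same as the one in \cite{vungoc}: treat $B_r$ as an integral--affine surface, use $2\pi$\--periodicity of the $J$\--flow to force $J$ to be the first affine coordinate (which explains both property~(3) and the residual $\mathfrak{I}$\--ambiguity), compute the linear holonomy around each focus--focus value to be conjugate to $T^{1}$, cut along the $\ell_i^{\epsilon_i}$ to kill the holonomy and develop over the resulting simply connected region, and finally patch in the elliptic local models at $\partial B$. The topological observations --- simple connectedness of $B$ via connectedness of $J$\--fibers, and that removing the disjoint cut arcs preserves simple connectedness --- are correct and are also what the cited proof uses. Property~(4) comes out exactly as you say, with $k(c)$ the number of cut half\--lines through $c$.

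Where your sketch leaves a genuine gap is in the convexity claim (5), and I would push back on the phrase ``these boundary functions are locally concave/convex over interior regular values trivially.'' The boundary of $\Delta$ over any interior $x$\--value consists of images of \emph{elliptic} singular values (Proposition~\ref{integer:prop}(b)), so that part of the argument must be carried by the elliptic local models, not triviality; and the decisive point --- that the holonomy $T^{1}$ necessarily bends the developed upper boundary downward and the developed lower boundary upward, for either choice of $\epsilon_i$ --- is precisely where the sign conventions fixed at the start of your proof have to be invoked and checked against the orientation of $\omega$ and the sign of the $S^{1}$\--action. You flag this yourself as the ``main obstacle,'' and indeed in \cite{vungoc} this is carried out carefully (using the normal form for the affine structure near a focus--focus point and the elliptic corner models, followed by a Tietze\--type local\--to\--global convexity argument). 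As written, your proposal identifies the right obstruction but does not close it: it asserts the sign ``cooperates'' without the computation that makes that true. A secondary, smaller gap is the extension of $f$ across $c_i$ itself: the statement that ``the two one\--sided limits of $f$ agree'' needs the explicit asymptotics of the action integrals (the $\sigma_1,\sigma_2$ of Section~\ref{focussection}), not just the closedness of $\sigma$; this is handled in \cite{vungoc0} and should be cited in that form rather than left implicit.

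In short: correct architecture, same route as the reference, but the convexity step (5) --- the substantive content of the theorem --- is asserted rather than proved, and the monodromy--sign compatibility is the concrete missing lemma.
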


In order to arrive at the rational convex polygon $\Delta$ in the proof of Theorem \ref{keytheorem}
one cuts the image $(J,\,H)(M) \subset \R^2$, which is in general not
convex, along each of the vertical lines
$\ell_i$, $i \in \{1,\,\ldots,\,m_f\}$. One must make a choice
of ``cut direction'' for each vertical line $\ell_i$, that is to say that one has to choose
whether to cut the set $(J,\,H)(M)$ along the half\--vertical\--line $\ell_i^{+1}$ which starts at $c_i$ going upwards,
or along the half\--vertical\--line $\ell_i^{-1}$ which starts at $c_i$ going downwards.
Precisely, 
the definitions of  $f$ and $\Delta$ 
in Theorem \ref{keytheorem} depend on two
choices in the proof~:
\begin{itemize}
\item[(a)] {\em an initial set of action variables $f_0$ of the form $(J,\,K)$} near a
  regular Liouville torus in \cite[Step 2, pf. of
  Th.~3.8]{vungoc}.
  If we choose $f_1$ instead of $f_0$
  we get a polytope $\Delta'$
  by left composition with an element of $\mathfrak{I}$.  Similarly
  instead instead of $f$ we obtain $f$ composed on the left with an
  element of $\mathfrak{I}$;
\item[(b)]  {\em a tuple $\vec{\epsilon}$ of $1$ and $-1$}.
If we choose $\vec{\epsilon'}$ instead of $\vec{\epsilon}$
  we get $ \Delta'=\op{t}_{\vec{u}}(\Delta) $ with
  $u_i=(\epsilon_i-\epsilon'_i)/2$, by \cite[Prop. 4.1,
  expr. (11)]{vungoc}.  Similarly instead of $f$ we obtain
  $f'=\op{t}_{\vec{u}} \circ f$.
\end{itemize}

\begin{definition} \label{toricmap} 
Let $(M,\, \omega,\, (J,\,H))$ be a semitoric integrable system and let $f$ a
choice of homeomorphism as in Theorem \ref{keytheorem}.
  We say that: 
  \begin{itemize}
  \item[(i)]
the map  $f\circ(J,\,H)$ is a \emph{generalized toric momentum map
    for $(M,\, \omega,\, (J, \, H))$};
 \item[(ii)]
the rational convex polygon
 $\Delta:=f\big((J,\,H)(M)\big)$ is a a \emph{generalized toric momentum polygon
    for $(M,\, \omega,\, (J, \, H))$}.
    \end{itemize}
    \end{definition}

For simplicity sometimes we omit the word ``generalized'' in
Definition \ref{toricmap}.

\subsection{Semitoric polygon invariant}
Let $\op{Polyg}(\R^2)$ be the space of rational
convex polygons in $\R^2$. Let $\op{Vert}(\R^2)$
be the set of vertical lines in $\R^2$, i.e.
$$\op{Vert}(\R^2)=\Big\{\ell_{\lambda}:=\{(x,\,y) \in \R^2 \, | \, x=\lambda\} \,\, | \,\, \lambda \in \R\Big\}.$$
\begin{definition} \label{semitoric-pol:def}
A {\em weighted polygon of complexity $s$} is a triple of the form
$$
\Delta_{\scriptop{weight}}=\Big(\Delta,\,
(\ell_{\lambda_j})_{j=1}^s,\, (\epsilon_j)_{j=1}^s\Big)
$$
where $s$ is a non\--negative integer and:
\begin{itemize}
\item
$\Delta \in \op{Polyg}(\R^2)$;
\item
$\ell_{\lambda_j} \in \op{Vert}(\R^2)$ for every $j \in \{1,\ldots,s\}$;
\item
$\epsilon_j \in \{-1,\,1\}$ for every $j \in \{1,\ldots,s\}$;
\item
$
\op{min}_{s \in \Delta}\pi_1(s)<\lambda_1<\ldots<\lambda_s<
\op{max}_{s \in \Delta}\pi_1(s),
$
where $\pi_1 \colon \R^2 \to \R$ is the canonical projection $\pi_1(x,\,y)=x$.
\end{itemize}
We denote by $\mathcal{W}\op{Polyg}_s(\R^2)$ the space
of all weighted polygons of complexity $s$.
\end{definition}

\begin{figure}[htb]
\begin{center}
\includegraphics{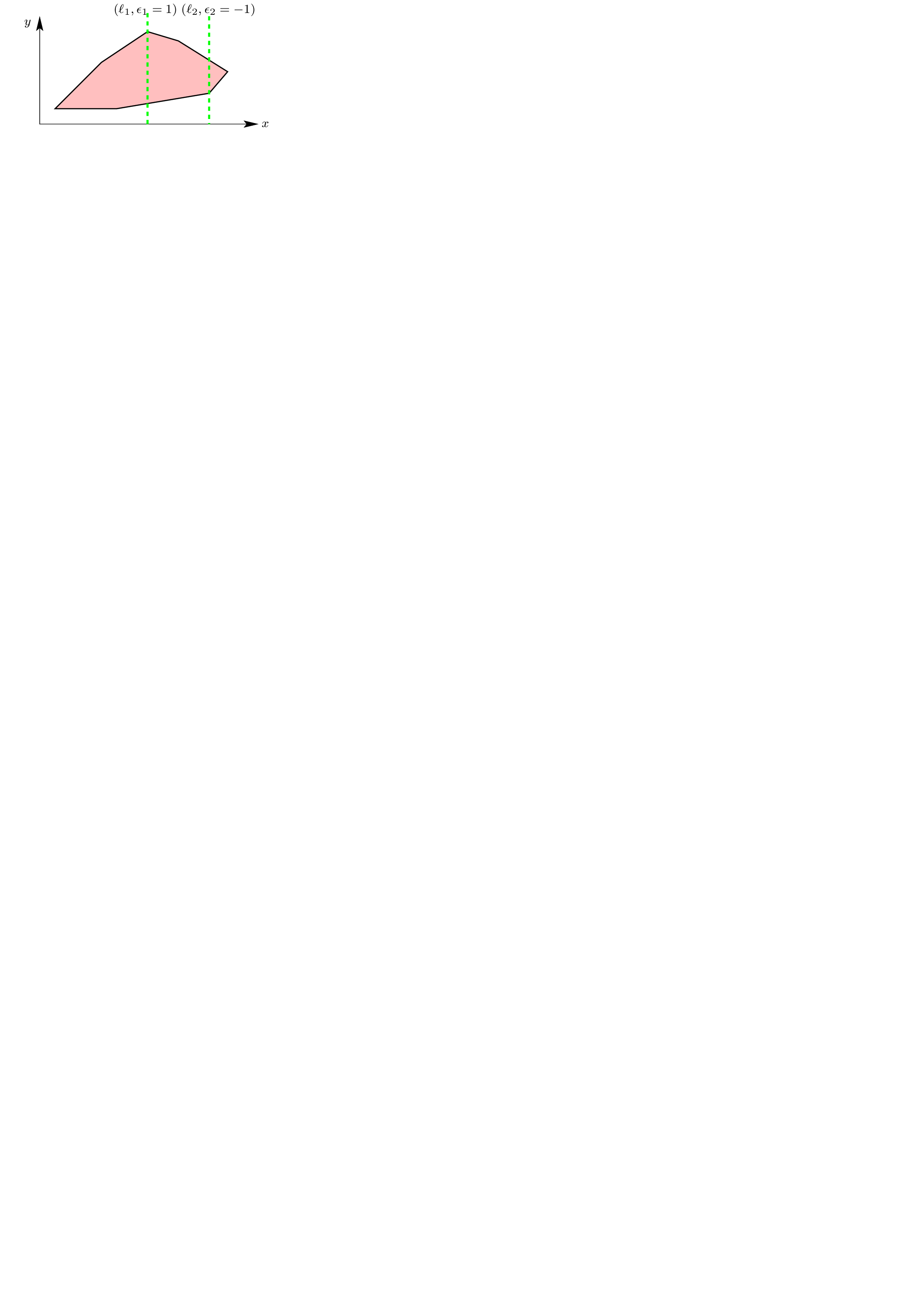}
\caption{A weighted polygon of complexity $2$.}
\end{center}
\label{AF3}
\end{figure}

For any $s \in \N$, let 
\begin{eqnarray} \label{Gs:eq}
G_s:=\{-1,\,+1\}^s
\end{eqnarray}
 and let 
 \begin{eqnarray} \label{calG:eq}
 \mathcal{G}:=\{T^k\,\, | \,\, k \in \Z\}, 
 \end{eqnarray}
 where $T$ is the $2$ by $2$ matrix (\ref{equ:Tk}).
Consider the action
of the product group $G _s\times \mathcal{G}$
on the space  $\mathcal{W}\op{Polyg}_s(\R^2)$:
the product
$$
((\epsilon'_j)_{j=1}^s,\,T^k) 
\cdot \Big( \Delta,\,(\ell_{\lambda_j})_{j=1}^s,\, (\epsilon_j)_{j=1}^s\Big)
$$
is defined to be
\begin{eqnarray} \label{productaction}
\Big(\op{t}_{\vec u}(T^k(\Delta)),\,(\ell_{\lambda_j})_{j=1}^s,\, (\epsilon'_j\,\epsilon_j)_{j=1}^s\Big),
\end{eqnarray}
where $\vec u=((\epsilon_i-\epsilon'_i)/2)_{i=1}^s$,
and $\op{t}_{\vec{u}}$ is a map of the form (\ref{tn:eq}).

\begin{definition} \label{generalizedpolytope}
Let $\Delta$ be a rational convex polygon 
obtained from the momentum image $(J,\,H)(M)$
according to the proof of Theorem \ref{keytheorem}
by cutting along the vertical half-lines $\ell_1^{\epsilon_1},
\ldots,\ell_{m_f}^{\epsilon_{m_f}}$.
The {\em semitoric polygon invariant} of $(M,\, \omega,\, (J,\,H))$
is the $(G_{m_f} \times \mathcal{G})$\--orbit 
\begin{eqnarray} \label{polin}
(G_{m_f} \times \mathcal{G})\cdot \Big(\Delta,\, (\ell_j)_{j=1}^{m_f},\,
(\epsilon_j)_{j=1}^{m_f}\Big) \in \mathcal{W}\op{Polyg}_{m_f}(\R^2)/(G_{m_f} \times \mathcal{G}),
\end{eqnarray}
where $\mathcal{W}\op{Polyg}_{m_f}(\R^2)$ is as in 
Definition \ref{semitoric-pol:def}
and the action of $G_{m_f} \times \mathcal{G}$ on $\mathcal{W}\op{Polyg}_{m_f}(\R^2)$
is given by  (\ref{productaction}).
\end{definition}

It follows now from Theorem~\ref{thm3.8advances} that the semitoric polygon invariant
does not depend on the isomorphism class of the system.
\begin{lemma} \label{iii)}
Let $(M_1, \, \omega_1, (J_1,\,H_1)),\, (M_2, \, \omega_2, (J_2,\,H_2))$ be
isomorphic $4$\--dimensional semitoric integrable systems. Then the semitoric polygon invariant
of  $(M_1, \, \omega_1, (J_1,\,H_1))$ is equal to the semitoric 
polygon invariant of $(M_2, \, \omega_2, (J_2,\,H_2))$.
\end{lemma}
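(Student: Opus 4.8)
The plan is to show that an isomorphism $\varphi\colon M_1\to M_2$ descends to a homeomorphism $g$ of the two bases which transports all the data entering the construction of Theorem~\ref{keytheorem}, and then to combine the uniqueness clause of that theorem with the analysis of the two choices (a), (b) recorded after it. First I would produce $g$. Since $\varphi$ is a symplectomorphism with $\varphi^*(J_2,H_2)=(J_1,f(J_1,H_1))$, it carries the singular Lagrangian (Liouville) foliation of $(J_1,H_1)$ to that of $(J_2,H_2)$; as the fibres of $F_i=(J_i,H_i)$ are connected (Proposition~\ref{integer:prop}(c)), $\varphi$ descends to a homeomorphism $g\colon B_1\to B_2$ with $F_2\circ\varphi=g\circ F_1$, namely $g(x,y)=(x,f(x,y))$. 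In particular $g$ preserves the first coordinate, because $\varphi^*J_2=J_1$; on each vertical slice it is therefore a monotone homeomorphism of an interval, with a sign $\delta\in\{\pm1\}$ that is constant by connectedness of $M_1$.

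Next I would check that $g$ transports the relevant structure. The integral-affine structure on $B_{i,r}$ is intrinsic to the fibration --- a local chart is integral-affine precisely when the Hamiltonian flows of its components are periodic of primitive period $2\pi$ --- so, $\varphi$ being symplectic, $g$ restricts to an integral-affine isomorphism $B_{1,r}\to B_{2,r}$, extending across the elliptic boundary (Proposition~\ref{integer:prop}(b)) to a homeomorphism that is smooth on the regular set. Moreover $g$ sends the focus-focus values of the first system bijectively onto those of the second (so $m^1_f=m^2_f=:m_f$, which re-proves Lemma~\ref{i)} along the way), and since it preserves $x$-coordinates it matches the vertical lines $\ell^1_i$ with $\ell^2_i$ (same abscissa $\lambda_i=x_i$) and preserves the multiplicities $k(c)$ occurring in Theorem~\ref{keytheorem}(4); if $\delta=-1$ it interchanges the upward and downward half-lines $\ell_i^{+1}$ and $\ell_i^{-1}$.

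Now fix $\vec\epsilon\in\{-1,+1\}^{m_f}$ and a homeomorphism $f_1$ as in Theorem~\ref{keytheorem} for the first system, with polygon $\Delta_1=f_1(B_1)$. I claim $f_1\circ g^{-1}\colon B_2\to\Delta_1$ is an admissible choice for the second system with cut directions $\delta\vec\epsilon$: conditions (1), (3), (5) are clear from the previous paragraphs, (2) holds because $g^{-1}$ is integral-affine on the regular set, and (4) follows by composing the jump relation for $f_1$ at $g^{-1}(c)$ with the (locally constant, $\op{GL}(2,\Z)$-valued) differential of $g^{-1}$ near $c$ and using $k(g^{-1}(c))=k(c)$. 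By the uniqueness clause of Theorem~\ref{keytheorem}, $f_1\circ g^{-1}$ differs from any choice $f_2$ made for the second system (with cuts $\delta\vec\epsilon$) by left composition with an element $A\in\mathfrak I$, whence $\Delta_1=A(\Delta_2)$ with $\Delta_2=f_2(B_2)$. Since $A$ has linear part $T^k$ for some $k\in\Z$, and since passing from cut directions $\delta\vec\epsilon$ to $\vec\epsilon$ for the second system alters its weighted polygon by a map $\op{t}_{\vec u}$ exactly as in dependence (b), the weighted polygons $(\Delta_1,(\ell_{\lambda_i})_{i=1}^{m_f},\vec\epsilon)$ and $(\Delta_2,(\ell_{\lambda_i})_{i=1}^{m_f},\vec\epsilon)$ lie in one and the same $(G_{m_f}\times\mathcal G)$-orbit under the action (\ref{productaction}); by Definition~\ref{generalizedpolytope} this says exactly that the semitoric polygon invariants agree.

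The main obstacle is the middle step: verifying carefully that $g$ is an integral-affine isomorphism up to and including the elliptic boundary of $B_r$, and that it preserves the full combinatorial package (number of focus-focus values, their vertical lines, the multiplicities $k(c)$, and the half-line data up to the global sign $\delta$). Once that is in place, the last paragraph is essentially bookkeeping with the uniqueness statement of Theorem~\ref{keytheorem} and the two choices (a), (b).
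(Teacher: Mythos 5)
Your argument follows exactly the route the paper takes — the paper's entire justification for Lemma~\ref{iii)} is the one-sentence appeal to Theorem~\ref{keytheorem} — and you fill in the details sensibly: produce the descended homeomorphism $g$, show it is integral-affine, observe that $f_1\circ g^{-1}$ satisfies conditions (1)--(5), and invoke the uniqueness clause. Two points deserve correction, one cosmetic and one substantive. The cosmetic one: in verifying condition (4) you do not need $\op{d}g^{-1}$ to be ``locally constant'' (for a general smooth $g$ it is not, in the ambient $\R^2$ coordinates in which condition (4) is phrased); all that matters is that $\op{d}g^{-1}$ is \emph{continuous} at the point $c$, so that the jump matrix $T^{k(g^{-1}(c))}$ passes through the composition unchanged.

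The substantive one concerns your handling of $\delta$. If $\delta=-1$ (i.e.\ the reparametrization $f$ in the isomorphism reverses the $H$-direction), then what you actually produce are the weighted polygons $\bigl(\Delta_1,\ (\ell_j),\ \vec\epsilon\bigr)$ for system $1$ and $\bigl(\Delta_1,\ (\ell_j),\ -\vec\epsilon\bigr)$ for system $2$ (same underlying polygon, opposite signs). These do \emph{not} in general lie in the same $(G_{m_f}\times\mathcal{G})$-orbit under (\ref{productaction}): applying the sign-flipping element of $G_{m_f}$ to the second one produces $\bigl(\op{t}_{\vec u} T^{k}\Delta_1,\ (\ell_j),\ \vec\epsilon\bigr)$ with $\vec u\neq 0$, and $\op{t}_{\vec u}T^k\Delta_1$ is a genuinely different convex polygon from $\Delta_1$ when the cuts are nontrivial (neither $\mathcal G$ nor the $G_{m_f}$-action contains the vertical reflection). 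Thus the statement as written would actually be false for $\delta=-1$. The resolution is that the paper's notion of isomorphism is implicitly taken, as in the cited \cite{vungoc}, to require $\partial f/\partial H>0$, so that $\delta=+1$ always; once that convention is made explicit, your proof closes cleanly and the $\delta$-bookkeeping in the last two paragraphs is unnecessary.
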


\section{Geometric invariants of a semitoric system}
\label{sec:geometric}
 
The invariants we have described so far are not
enough to determine whether two $4$\--dimensional
semitoric systems are isomorphic.  In this
section we introduce two global geometric invariants,
which encode a mixture of information about 
local and global behavior.
Throughout,
$(M, \, \omega,\, (J,\,H))$ is a $4$\--dimensional semitoric integrable system
with $m_f$ isolated focus\--focus singular values.

\subsection{The Volume Invariant}
\label{volumesection}

The invariant we introduce next is easy to define 
using the combinatorial ingredients we have by now introduced.
Consider a focus\--focus critical point $m_i$ whose image by $(J,\,H)$
is $c_i$ for $i \in \{1,\, \ldots,\, m_f\}$, and let $\Delta$ be a rational convex polygon
corresponding to the system $(M,\, \omega,\, (J,\,H))$,
c.f. Definition \ref{generalizedpolytope}.

\begin{lemma} \label{height:lemma}
If $\mu$ is a toric momentum map for the semitoric system 
$(M, \, \omega,\,(J, \, H))$ corresponding to $\Delta$, c.f. Definition \ref{toricmap}, 
then the image $\mu(m_i)$, where $i \in \{1,\,\ldots,\,m_f\}$,  is a point lying in the
interior of the polygon $\Delta$, along the line $\ell_i$.
  The vertical distance 
  \begin{eqnarray} \label{height:eq}
  h_i:=\mu(m_i)-\min_{s \in \ell_i \cap \Delta} \pi_2(s)>0
  \end{eqnarray}
  between $\mu(m_i)$ and the point of
  intersection of $\ell_i$ with the image polytope with
  lowest $y$\--coordinate, is independent of the choice of 
  momentum map $\mu$. Here
  $\pi_2 \colon \R^2 \to \R$ is the canonical projection
  $\pi_2(x,\,y)=y$.
   \end{lemma}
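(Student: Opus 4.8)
The plan is to verify the two assertions separately: first that $\mu(m_i)$ lies on $\ell_i$ in the interior of $\Delta$, and then that the quantity $h_i$ in \eqref{height:eq} does not depend on the choice of toric momentum map $\mu$ (equivalently, on the choice of $f$ as in Theorem~\ref{keytheorem}). For the first assertion I would argue that since $f$ preserves $J$ (property (3) of Theorem~\ref{keytheorem}), the toric momentum map $\mu = f\circ(J,H)$ has first component $J$, so $\mu(m_i)$ has first coordinate $x_i$, i.e.\ it lies on the vertical line $\ell_i$. That $\mu(m_i)$ is interior to $\Delta$ follows from Proposition~\ref{integer:prop}(a)--(b): the focus\--focus value $c_i$ is in $\op{Int} B$, and $f$ restricted to $B\setminus\ell^{\vec\epsilon}$ is a homeomorphism onto $\Delta$ sending interior points to interior points (the boundary of $\Delta$ corresponds under $f$ to the boundary of $B$, which by (b) consists of elliptic values only). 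A small amount of care is needed because $c_i$ itself lies on the cut $\ell_i^{\epsilon_i}$ where $f$ is only a homeomorphism and not a diffeomorphism, but $f$ is still continuous and injective there, so $\mu(m_i)=f(c_i)$ is a well\--defined point and it is interior because it is a limit of images of interior regular values.

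For the independence statement, recall from the discussion after Theorem~\ref{keytheorem} that any two admissible $f$'s differ by (a) left composition with an element of $\mathfrak{I}$ — a vertical translation composed with some $T^k$ — and (b) a change of cut tuple $\vec\epsilon\rightsquigarrow\vec\epsilon'$, which replaces $f$ by $\op{t}_{\vec u}\circ f$ with $u_j=(\epsilon_j-\epsilon'_j)/2$. I would check that $h_i$ is invariant under each of these operations. Elements of $\mathfrak{I}$ preserve vertical lines and act on the second coordinate by an affine map $y\mapsto y + kx + c$ that is constant along each vertical line; hence along $\ell_i$ it is a single translation $y \mapsto y + (kx_i+c)$, which shifts $\mu(m_i)$ and $\min_{s\in\ell_i\cap\Delta}\pi_2(s)$ by the same amount, leaving their difference $h_i$ unchanged. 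For the piecewise\--linear maps $\op{t}_{\vec u}$: the map $\op{t}^{u_j}_{\ell_j}$ is the identity to the left of $\ell_j$ and $T^{u_j}$ to the right, so along the line $\ell_i$ the composite $\op{t}_{\vec u}$ acts either as the identity or as a power of $T$ depending on the position of $x_i$ relative to the $x_j$'s; in either case it is affine on $\ell_i$ and, being a power of $T$ up to the translation part (which is zero here since $\ell_j$ passes through $c_j$ on the new polygon), restricts on the vertical line $\{x=x_i\}$ to $y\mapsto y + (\text{integer})\cdot x_i$, again a pure translation along $\ell_i$. So once more $\mu(m_i)$ and the minimum of $\pi_2$ over $\ell_i\cap\Delta$ move together and $h_i$ is unchanged. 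One subtlety: changing $\vec\epsilon$ also changes which leaf the cut is along, so one should check $\mu(m_i)$ is computed on the correct side; but since $f(c_i)$ is a single point (the cut emanates from $c_i$, it is not cut at $c_i$ itself) this is not an issue.

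Finally I would address the strict positivity $h_i>0$. Since $\mu(m_i)$ lies in the interior of $\Delta$, it lies strictly above the lowest point of $\Delta$ on the line $\ell_i$: the segment $\ell_i\cap\Delta$ is a genuine (nondegenerate) segment because $\ell_i$ passes through an interior point, and $\mu(m_i)$ is interior to $\Delta$ hence interior to that segment, so it is strictly above the bottom endpoint. This gives $h_i>0$.

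\textbf{Main obstacle.} The routine part is the bookkeeping of how $\mathfrak{I}$ and $\op{t}_{\vec u}$ act along a vertical line; the conceptual point that requires the most care is justifying that $\mu(m_i)=f(c_i)$ is a well\--defined interior point despite $f$ failing to be a diffeomorphism precisely along the cut $\ell_i^{\epsilon_i}$ through $c_i$ — i.e.\ disentangling the behavior of $f$ at the focus\--focus value from its behavior on the adjacent half\--line. I expect this to be handled by appealing to the continuity and injectivity of $f$ on all of $B$ together with Proposition~\ref{integer:prop}, rather than to any differentiability at $c_i$.
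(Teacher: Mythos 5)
Your proposal is correct and uses essentially the same key idea as the paper: the paper's (one-sentence) proof simply notes that two toric momentum maps associated to the same semitoric system differ by piecewise affine transformations, all of which act on any fixed vertical line as translations, so the vertical distance $h_i$ is unchanged. Your write-up spells out the two sources of ambiguity (composition by $\mathfrak{I}$ and change of cut tuple $\vec\epsilon$), verifies the translation property along $\ell_i$ in each case, and separately handles interiority and strict positivity of $h_i$; this is the same argument with the bookkeeping made explicit.
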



 Lemma \ref{height:lemma} follows  from the fact that two different toric
 momentum maps only differ by piecewise affine transformations, which
 all act on any fixed vertical line as translations.

\begin{definition}
We say that  the vertical distance (\ref{height:eq}) bewteen $\mu(m_i)$ and the point of
  intersection of $\ell_i$ with the image polytope that has the
  lowest $y$\--coordinate is {\em the height of the focus\--focus
  critical value} $c_i$, where $i \in \{1,\,\ldots,\,m_f\}$.
\end{definition}

\begin{remark}
 One can give a geometrical meaning to the height of the focus-focus
  critical values. Let $Y_i=J^{-1}(c_i)$. This singular manifold
  splits into two parts, $Y^+_i$ and $Y^-_i$ defined as
  $Y_i\cap\{H>H(m_i)\}$ and $Y_i\cap\{H<H(m_i)\}$, respectively. 
  The height of the focus-focus critical value $c_i$ is simply the
  Liouville volume of $Y^-_i$.
\end{remark}

Since isomorphic systems share the same set of momentum polygons, we
have the following result.
\begin{lemma} \label{iv)}
Let $(M_1, \, \omega_1, (J_1,\,H_1)),\, (M_2, \, \omega_2, (J_2,\,H_2))$ be
isomorphic $4$\--dimensional
semitoric integrable systems and let $(h^j_i)_{i=1}^{m^i_f}$ be the tuple
of heights of focus\--focus critical values of of $(M_j, \, \omega_j, (J_j,\,H_j))$, $j
\in \{1,\,2\}$. Then 
the tuple $(h^1_i)_{i=1}^{m^1_f}$ is equal to the tuple $(h^2_i)_{i=1}^{m^2_f}$.
\end{lemma}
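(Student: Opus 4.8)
\textbf{Proof proposal for Lemma~\ref{iv)}.} The plan is to reduce the statement to the already-established invariance of the other two objects on which the heights depend: the family of momentum polygons (Lemma~\ref{iii)}) and the intrinsic geometry near the focus\--focus fibers. First I would recall the geometric description of the height given in the Remark preceding the Lemma: if $Y_i := J^{-1}(c_i)$ (more precisely the connected component of $J^{-1}(J(c_i))$ through the focus\--focus fiber, split by $\{H \gtrless H(m_i)\}$ into $Y_i^+$ and $Y_i^-$), then $h_i^j$ equals the Liouville volume of $Y_i^{j,-}$. The point of using this characterization is that the Liouville volume of a subset of $M_j$ cut out by level sets of $J_j$ and $H_j$ is manifestly a symplectic quantity attached to the pair $(M_j,\omega_j,(J_j,H_j))$, so it is a good candidate for something preserved under isomorphism.

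Next I would unwind the definition of isomorphism. By hypothesis there is a symplectomorphism $\varphi\colon M_1\to M_2$ with $\varphi^*(J_2,H_2)=(J_1,f(J_1,H_1))$ for some smooth $f$. Then $\varphi$ carries the singular foliation of $(J_1,H_1)$ to that of $(J_2,H_2)$, hence focus\--focus points to focus\--focus points, and it identifies the $i$\--th focus\--focus fiber of the first system with a focus\--focus fiber of the second; relabel so that $m_i^1\mapsto m_i^2$ and $c_i^1\mapsto$ the corresponding critical value. Because $\varphi^*J_2=J_1$, the level set $J_1^{-1}(J_1(m_i^1))$ is sent onto $J_2^{-1}(J_2(m_i^2))$, so $\varphi(Y_i^1)=Y_i^2$. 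Moreover $\varphi$ intertwines the $S^1$\--actions generated by $J_1$ and $J_2$, and it sends the sublevel set $\{H_1<H_1(m_i^1)\}\cap Y_i^1$ onto $\{H_2 \lessgtr H_2(m_i^2)\}\cap Y_i^2$: the only subtlety is an orientation issue, since $f$ may reverse the sign of the second coordinate. If $\partial f/\partial H_1>0$ at the relevant point then ``below'' goes to ``below''; if $\partial f/\partial H_1<0$ it goes to ``above'', but then the convention in Theorem~\ref{keytheorem}(3)--(4) that $f_\epsilon$ preserves $J$ together with the normalization forces the matching polygon to be reflected accordingly, so the quantity $h_i$ computed as ``distance up from the lowest intersection point of $\ell_i\cap\Delta$'' is unchanged. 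Since $\varphi$ is a symplectomorphism it preserves the Liouville measure $\omega\wedge\omega/2$, so $\mathrm{Vol}(Y_i^{1,-})=\mathrm{Vol}(\varphi(Y_i^{1,-}))=\mathrm{Vol}(Y_i^{2,-})$, i.e.\ $h_i^1=h_i^2$.

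Alternatively — and this is the route I would actually write out cleanly — I would avoid the volume reinterpretation and argue directly from the polygon. By Lemma~\ref{iii)} the two systems have the same semitoric polygon invariant, so there is a representative polygon $\Delta$ (with its weighted vertical lines $\ell_i$) common to both, and by Definition~\ref{toricmap} one may choose generalized toric momentum maps $\mu_1$ on $M_1$ and $\mu_2$ on $M_2$ with image $\Delta$, compatible under $\varphi$ in the sense that $\mu_2\circ\varphi$ and $\mu_1$ differ only by an element of $\mathfrak{I}$ (a vertical translation composed with some $T^k$). By Lemma~\ref{height:lemma} the height $h_i$ is independent of which such $\mu$ is used, and any element of $\mathfrak{I}$ acts on the fixed vertical line $\ell_i$ as a translation; such a translation shifts $\mu(m_i)$ and $\min_{s\in\ell_i\cap\Delta}\pi_2(s)$ by the same amount, hence leaves their difference $h_i$ invariant. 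Since $\varphi(m_i^1)=m_i^2$ and $\mu_2(\varphi(m_i^1))$ differs from $\mu_1(m_i^1)$ by such a transformation, we get $h_i^1=h_i^2$ for every $i$, which is the claim.

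The main obstacle I anticipate is the bookkeeping around the labeling of focus\--focus points and the sign of $\partial f/\partial H$: one must check that the identification of the $i$\--th focus\--focus value of the first system with that of the second respects the ordering used in forming the tuples, and that the up/down splitting of $Y_i$ is matched correctly under a possible orientation reversal of the second coordinate. Both points are resolved by invoking that the isomorphism preserves the singular Liouville foliation (so the combinatorics of critical values, already an invariant by Lemma~\ref{i)} and its refinements, is preserved) and that $\varphi$ preserves the $S^1$\--action generated by $J$; once those are in place, the equality of heights is immediate from symplectic (hence Liouville\--volume) invariance, or equivalently from the translation\--invariance of $h_i$ along $\ell_i$.
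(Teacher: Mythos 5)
Your second (polygon-based) argument is essentially the paper's proof: the paper simply combines Lemma~\ref{height:lemma} — that $h_i$ is unchanged when $\mu$ is replaced by any other toric momentum map, because the ambiguity group $\mathfrak{I}$ acts on each fixed vertical line $\ell_i$ by translation — with the fact that isomorphic systems share the same semitoric polygon invariant (Lemma~\ref{iii)}). Your first (Liouville-volume) route is a legitimate alternative reformulation of the same fact, stated in the Remark following Lemma~\ref{height:lemma}, but the paper does not use it in the proof; in either case the orientation worry you raise is already absorbed into the construction, since every generalized toric momentum map has first component $J$ and $\mathfrak{I}$ is orientation-preserving, so no vertical flip can occur.
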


The volume invariant is very easy to compute from a weighted polygon, and hence it
is a quick way to rule out that two semitoric integrable systems are not
isomorphic.

\subsection{The Twisting\--Index Invariant}
\label{indexsection}

For clarity,
we divide the construction of the twisting index invariant into five steps.
Let 
\begin{eqnarray} \label{pol:eq}
\Delta_{\scriptop{weight}}:=\Big(\Delta,\, (\ell_j)_{j=1}^{m_f},\,
(\epsilon_j)_{j=1}^{m_f}\Big) \in \mathcal{W}\op{Polyg}_{m_f}(\R^2),
\end{eqnarray}
be a weighted polygon as in 
expression (\ref{polin}), representing the orbit given
by the semitoric polygon invariant of the system
$(M,\, \omega,\, (J,\,H))$, c.f. Definition \ref{generalizedpolytope},
where recall that the polygon $\Delta$
is obtained from the momentum image $(J,\,H)(M)$
according to the proof of Theorem \ref{keytheorem}
by cutting along the vertical lines $\ell_1,
\ldots,\ell_{m_f}$ in the direction of
$\epsilon_1,\ldots,\epsilon_{m_f}$, i.e. upwards
if $\epsilon_i$ is $+1$ and downwards otherwise.
Write $F=(J,\,H)$, $c_1,\ldots,c_{m_f}$ for 
the focus\--focus critical values. 

In the first three steps we define for each
$i \in \{1,\,\dots,\,m_f\}$, an integer $k_i$ that we shall call the \emph{twisting
  index of} the focus\--focus value $c_i$, on which
  we built to construct the actual invariant associated to
$(M,\, \omega,\, (J,\,H))$ in Step 5.
\\
\\
{\em Step 1: an application of Eliasson's theorem}. 
Let $(e_1,\,e_2)$ be the canonical basis of $\R^2$.  Let
$\ell=\ell_i^{+1}\subset\R^2$ be the vertical half-line starting at $c_i$ and
pointing in the direction of $\epsilon_i\, e_2$.

Let us apply Eliasson's theorem in a small neighbourhood $W=W_i$ of
the focus-focus critical point $m_i=F^{-1}(c_i)$~: there exists a
local symplectomorphism $\phi:(\R^4,\,0)\to W$, and a local
diffeomorphism $g$ of $(\R^2,\,0)$ such that
\begin{equation}
  F \circ \phi= g\circ q,
  \label{equ:eliasson} 
\end{equation}
where $q$ is the quadratic momentum map given
by~\eqref{equ:cartan}. Since the second component, $q_2\circ\phi^{-1}$
has a $2\pi$\--periodic Hamiltonian flow, it must be equal to $J$ in
$W$, up to a sign.  Composing if necessary $\phi$ by the canonical
transformation $(x,\xi)\mapsto(-x,-\xi)$, one can always assume that
$q_2=J\circ\phi$ in $W$. This means that $g$ is of the form
\begin{equation}
  g(q_1,\,q_2)=(q_2,\,g_2(q_1,\,q_2)).
  \label{equ:g} 
\end{equation}

Moreover, upon composing $\phi$ by the canonical transformation
$(x,\,y,\,\xi,\,\eta)\mapsto(-\xi,\,-\eta,\,x,\,y)$, which changes $(q_1,\,q_2)$
into $(-q_1,\,q_2)$, one can always assume that
\begin{equation}
  \frac{\partial g_2}{\partial q_1}(0)>0.
  \label{equ:sign}
\end{equation}
In particular, near the origin $\ell$ is transformed by $g^{-1}$ into
the positive real axis if $\epsilon_i=1$, or the negative real axis if
$\epsilon_i=-1$.
\\
\\
{\em Step 2: the smooth vector field $\mathcal{X}_p$}. 
Let us now fix the origin of angular polar coordinates in $\R^2$ on
the \emph{positive} real axis. Let $V=F(W)$ and define
$\tilde{F}=(H_1,\,H_2)=g^{-1}\circ F$ on $F^{-1}(V)$ (notice that
$H_2=J$). Now recall from Section~\ref{focussection} that near any
regular torus there exists a Hamiltonian vector field $\mathcal{X}_p$,
whose flow is $2\pi$\--periodic, defined by
\begin{equation}
  2\pi\mathcal{X}_p =
  (\tau_1\circ\tilde{F})\mathcal{X}_{H_1}+(\tau_2\circ\tilde{F})\mathcal{X}_J,
  \label{equ:Xp} 
\end{equation}
where $\tau_1$ and $\tau_2$ are functions on $\R^2\setminus\{0\}$
satisfying~\eqref{equ:sigma}, with $\sigma_1(0)>0$. In fact $\tau_2$
is multivalued, but we determine it completely in polar coordinates
with angle in $[0,\,2\pi)$ by requiring continuity in the angle variable
and $\sigma_2(0)\in[0,\,2\pi)$. In case $\epsilon_i=1$, this defines
$\mathcal{X}_p$ as a smooth vector field on
$F^{-1}(V\setminus\ell)$. In case $\epsilon_i=-1$ we keep the same
$\tau_2$\--value on the negative real axis, but extend it by continuity
in the angular interval $[\pi,\,3\pi)$. In this way $\mathcal{X}_p$ is
again a smooth vector field on $F^{-1}(V\setminus\ell)$.
\\
\\
{\em Step 3: twisting index of a weighted polygon at a focus\--focus singularity}. 
Let $\mu$ be the generalized toric momentum map, c.f. Definition
\ref{toricmap}, associated to the polygon $\Delta$. On
$F^{-1}(V\setminus \ell$), $\mu$ is smooth, and its components
$(\mu_1,\,\mu_2)=(J,\,\mu_2)$ are smooth Hamiltonians, whose vector fields
$(\mathcal{X}_J,\mathcal{X}_{\mu_2})$ are tangent to the foliation,
have a $2\pi$-periodic flow, and are \emph{a.e.}  independent. Since
the couple $(\mathcal{X}_J,\mathcal{X}_p)$ shares the same properties,
there must be a matrix $A\in\textup{GL}(2,\Z)$ such that
$(\mathcal{X}_J,\mathcal{X}_{\mu_2}) = A
(\mathcal{X}_J,\mathcal{X}_p)$. This is equivalent to saying that
there exists an integer $k_i\in\Z$ such that
\begin{eqnarray} \label{ki:eq}
\mathcal{X}_{\mu_2} = k_i\mathcal{X}_{J} + \mathcal{X}_p.
\end{eqnarray}

\begin{prop}
For a fixed weighted polygon $\Delta_{\scriptop{weight}}$ as in equation (\ref{pol:eq}), 
the integer  $k_i$ in (\ref{ki:eq})
is well defined for each $i \in \{1,\,\ldots,\,m_f\}$, i.e. it does not depend on 
\begin{itemize}
\item[(a)] the choice of the periodic
  Hamiltonian $\mathcal{X}_p$;
\item[(b)]
  the transformations involved in Eliasson's normal
  form~(\ref{equ:eliasson}), with the sign constraints~(\ref{equ:g})
  and~(\ref{equ:sign}).
\end{itemize}
\end{prop}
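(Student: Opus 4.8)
The plan is to establish well-definedness by analyzing separately how $k_i$ responds to the two sources of ambiguity, showing in each case that the defining relation \eqref{ki:eq} is unaffected. The key observation is that $k_i$ is characterized intrinsically: it is the unique integer such that $\mathcal{X}_{\mu_2} - \mathcal{X}_p = k_i \mathcal{X}_J$ on $F^{-1}(V \setminus \ell)$, where $\mathcal{X}_{\mu_2}$ comes from the toric momentum map $\mu$ attached to $\Delta$, and $\mathcal{X}_p$ is the periodic vector field built in Step 2. So it suffices to prove that neither $\mathcal{X}_p$ (modulo its own admissible choices) nor the pair $(\mathcal{X}_J, \mathcal{X}_{\mu_2})$ actually changes when we vary the choices in (a) and (b).

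\medskip\noindent\textbf{Independence from the choice of $\mathcal{X}_p$.} First I would argue that $\mathcal{X}_p$ is in fact \emph{canonically} determined by the foliation together with the sign conventions fixed in Steps 1 and 2. Indeed, by V\~u Ng\d oc's result \cite{vungoc0} the functions $\sigma_1, \sigma_2$ in \eqref{equ:sigma} are intrinsic to the singular Lagrangian foliation near the focus\--focus leaf once one fixes: the choice of action $H_1$ complementary to $J$ (which enters only through $\tilde F = g^{-1} \circ F$), the determination of $\tau_2$ by continuity in the angle on $[0,2\pi)$ (resp.\ $[\pi, 3\pi)$ when $\epsilon_i = -1$), and the normalization $\sigma_2(0) \in [0, 2\pi)$. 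Formula \eqref{equ:Xp} then produces the same vector field $\mathcal{X}_p$. More precisely, any other periodic Hamiltonian vector field tangent to the foliation with the stated $2\pi$\--periodicity differs from $\mathcal{X}_p$ by an integer multiple of $\mathcal{X}_J$ (this is the same $\mathrm{GL}(2,\Z)$ argument used to define $k_i$ in the first place, applied to the regular part), but the continuity-in-angle requirement on $\tau_2$ together with $\sigma_2(0) \in [0, 2\pi)$ pins down that integer. Hence any admissible $\mathcal{X}_p$ is \emph{the} canonical one, and changing it does not change $k_i$.

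\medskip\noindent\textbf{Independence from the Eliasson data.} Next I would show that the remaining freedom in \eqref{equ:eliasson} — the local symplectomorphism $\phi$ and the diffeomorphism $g$, subject to $g(q_1,q_2) = (q_2, g_2(q_1,q_2))$ from \eqref{equ:g} and $\partial g_2/\partial q_1(0) > 0$ from \eqref{equ:sign} — does not affect $\mathcal{X}_p$ either. The point is that these constraints already fix $g$ uniquely up to the identity at the level of relevant data: the condition $q_2 = J \circ \phi$ forces the first component of $g$ to be $q_2$, and the sign condition \eqref{equ:sign} selects which half\--line $\ell$ corresponds to the positive real axis, hence which determination of the polar angle to use. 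Any two choices of $(\phi, g)$ satisfying all constraints differ by a symplectomorphism of $(\R^4, 0)$ preserving the foliation and fixing $J$, and one checks — invoking the uniqueness part of Eliasson's normal form together with the linearization of such foliation\--preserving symplectomorphisms — that the induced change on $g_2$ preserves the sign in \eqref{equ:sign} and acts trivially on the germ of $(\tau_1, \tau_2)$ up to the already\--controlled ambiguity. Therefore $\mathcal{X}_p$, and with it $k_i$, is unchanged. Since $\mu$ (hence $\mathcal{X}_{\mu_2}$) is fixed once the weighted polygon $\Delta_{\scriptop{weight}}$ is fixed — up to a transformation in $\mathfrak{I}$ acting on vertical lines as translations, which leaves $\mathcal{X}_{\mu_2}$ literally unchanged — the integer $k_i$ in \eqref{ki:eq} is well defined.

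\medskip\noindent\textbf{Main obstacle.} I expect the delicate point to be the argument that the sign constraints \eqref{equ:g}--\eqref{equ:sign} genuinely eliminate \emph{all} the relevant ambiguity in $g$, and in particular that two competing choices of $\phi$ cannot produce Eliasson data whose $\tau_2$\--determinations differ by a ``wrong'' integer that $k_i$ would then absorb. This requires a careful bookkeeping of how the discrete group of symplectic linear maps preserving the focus\--focus quadratic model (generated by $(x,\xi)\mapsto(-x,-\xi)$, $(x,y,\xi,\eta)\mapsto(-\xi,-\eta,x,y)$, and rotations in the two symplectic planes) acts on $(q_1, q_2)$ and hence on the polar angle; the sign normalizations \eqref{equ:sign} and the convention $\sigma_2(0)\in[0,2\pi)$ must be shown jointly to cut this group down to the trivial action on the germ of $\mathcal{X}_p$. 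The rest of the proof is then bookkeeping of signs and the elementary $\mathrm{GL}(2,\Z)$\--rigidity of periodic Hamiltonian vector fields on a regular Lagrangian fibration.
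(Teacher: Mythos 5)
Your overall strategy is the right one --- reduce everything to showing that the vector field $\mathcal{X}_p$ is canonically determined by the sign conventions --- and it is the same strategy the paper uses. But two of the steps you leave as assertions are exactly where the work is, and as written they contain a gap.

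\smallskip
\emph{First}, in your ``independence from the choice of $\mathcal{X}_p$'' paragraph you jump straight to ``any other periodic Hamiltonian vector field $\ldots$ differs from $\mathcal{X}_p$ by an integer multiple of $\mathcal{X}_J$.'' That already builds in that the $\mathcal{X}_p$-coefficient is $1$. A priori all you know from the $\mathrm{GL}(2,\Z)$/period-lattice argument is that an admissible $\mathcal{X}_p'$ satisfies $\mathcal{X}_p' = a\,\mathcal{X}_p + b\,\mathcal{X}_J$ for some coprime integers $a,b$, and you still have to rule out $a\neq 1$. The paper does this by a concrete asymptotics argument: plugging into~(\ref{equ:Xp}) gives $\tau_1' = a\,\tau_1$ and $\tau_2' = a\,\tau_2 + 2\pi b$ up to flat corrections, and since $\tau_1$ has the forced logarithmic divergence $\tau_1(z) = -\Re(\ln z) + \mathrm{smooth}$ from~(\ref{equ:sigma}), the only possible value is $a=1$. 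Only then does the normalization $\sigma_2(0),\sigma_2'(0)\in[0,2\pi)$ give $b=0$. Your proof uses the $[0,2\pi)$ normalization but never uses the logarithmic asymptotics of $\tau_1$, so the scaling ambiguity is not actually eliminated.

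\smallskip
\emph{Second}, your treatment of the Eliasson ambiguity is a placeholder. You write that ``one checks --- invoking the uniqueness part of Eliasson's normal form together with the linearization of such foliation-preserving symplectomorphisms --- that the induced change on $g_2$ $\ldots$ acts trivially on the germ of $(\tau_1,\tau_2)$ up to the already-controlled ambiguity.'' This is precisely the nontrivial content, and your own ``Main obstacle'' paragraph concedes that the bookkeeping has not been carried out. The paper does not attempt this bookkeeping from scratch: it cites Lemma 4.1 of \cite{vungoc0}, which says that changing the Eliasson data~(\ref{equ:eliasson}) subject to~(\ref{equ:g})--(\ref{equ:sign}) can only alter $g_2$ (hence $H_1$) by a flat function at the origin. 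That is the single input that collapses the entire discrete-symmetry discussion you are worried about: a flat perturbation of $H_1$ perturbs $\tau_1,\tau_2$ only by flat functions, which feeds directly into the $a=1$, $b=0$ computation above. Without a substitute for this lemma, your proof has no control over how the choice of $\phi$ and $g$ interacts with the formulas defining $\tau_1,\tau_2$, so the independence in item (b) of the proposition is not established.

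\smallskip
In short: the skeleton matches the paper's, but the two loaded claims --- that the coefficient of $\mathcal{X}_p$ is $1$, and that the Eliasson data only move $g_2$ by a flat term --- are precisely where the paper has a real argument (log asymptotics of $\tau_1$; Lemma 4.1 of \cite{vungoc0}) and where your proposal has none.
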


\begin{proof}
  It follows from Lemma~4.1 in~\cite{vungoc0} that changing the
  transformations involved in Eliasson's normal form \cite{eliasson-these} can only modify
  $g_2$ --- and hence $H_1$ --- by a flat term. Suppose
  $\mathcal{X}_p'$ is another admissible choice for a Hamiltonian of
  the form
  \[
  2\pi\mathcal{X}_p' =
  (\tau_1'\circ\tilde{F}')\mathcal{X}_{H_1'}+(\tau_2'\circ\tilde{F}')\mathcal{X}_J.
  \]
  Since $\mathcal{X}_p'$ has a $2\pi$-periodic flow, there must be
  coprime integers $a,\,b$ in $\Z$ such that
  \begin{equation}
    \mathcal{X}_p' = a\mathcal{X}_p + b\mathcal{X}_J.
    \label{equ:period}
  \end{equation}
  Inserting~(\ref{equ:Xp}), we see that there exist functions $Z_1$
  and $Z_2$ that vanish at all orders at the origin such that
  $$
  2\pi\mathcal{X}_p' = (a\tau_1\circ\tilde{F}+Z_1)\mathcal{X}_{H_1'}
  + (a\tau_2\circ\tilde{F}+2\pi b+Z_2)\mathcal{X}_J. 
  $$
  From this we see
  that, up to a flat function, $\tau_1'=a\tau_1$ and
  $\tau_2'=a\tau_2+2\pi b$. Because of the logarithmic asymptotics
  required in~(\ref{equ:sigma}), the first equation requires
  $a=1$. But then, the second equation with the restriction that both
  $\sigma_2(0)$ and $\sigma_2'(0)$ must be in $[0,2\pi)$ implies that
  $b=0$. Recalling~(\ref{equ:period}) we obtain
  $\mathcal{X}_p'=\mathcal{X}_p$, which shows that $k_i$ is indeed
  well-defined.
\end{proof}

\begin{definition} \label{ki:def}
Let $\Delta_{\scriptop{weight}}$ be a fixed weighted 
polygon as in (\ref{pol:eq}).
For each $i \in \{1,\,\ldots,\,m_f\}$, 
the integer $k_i$ defined in equation (\ref{ki:eq}) is called the \emph{twisting index of 
  $\Delta_{\scriptop{weight}}$ at the focus\--focus critical value $c_i$.}
\end{definition}
The integer $k_i$ in Definition \ref{ki:def}
is still not the relevant object that we intend to associate to the semitoric
system, but we shall build on its definition to construct the actual invariant.
\\
\\
{\em Step 4: the privileged momentum map}.
We explain how there is a reasonable way to ``choose'' a momentum map
for $(M,\, \omega,\, (J,\,H))$.

\begin{lemma}
  \label{rema:priviledged}
  There exists a unique smooth function $H_p$ on $F^{-1}(V\setminus
  \ell)$ the  Hamiltonian vector field of which is $\mathcal{X}_p$ and such
  that $\lim_{m\to m_i}H_p=0.$ 
  \end{lemma}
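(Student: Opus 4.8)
The plan is to obtain $H_p$ by integrating the $1$-form dual to $\mathcal{X}_p$ along the foliation, using the fact that $\mathcal{X}_p$ is, by construction, a Hamiltonian vector field for the Liouville foliation near the regular torus; the only issue is to pin down the additive constant by the normalization $\lim_{m\to m_i}H_p=0$, and to check that this limit makes sense. First I would recall from Step 2 that $\mathcal{X}_p$ is defined on $F^{-1}(V\setminus\ell)$ by $2\pi\mathcal{X}_p=(\tau_1\circ\tilde F)\mathcal{X}_{H_1}+(\tau_2\circ\tilde F)\mathcal{X}_J$, and that $\tau_1,\tau_2$ are functions of $\tilde F$ alone; hence on each fiber $\mathcal{X}_p$ is a constant-coefficient combination of $\mathcal{X}_{H_1}$ and $\mathcal{X}_J$, so its flow preserves the foliation and its contraction with $\omega$ is a closed $1$-form. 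Locally, then, $\iota_{\mathcal{X}_p}\omega=\DD{}H_p$ for some smooth function $H_p$, determined up to a locally constant function; since $2\pi\mathcal{X}_p=\tau_1(\tilde F)\mathcal{X}_{H_1}+\tau_2(\tilde F)\mathcal{X}_J$ and $\tau_1,\tau_2$ depend only on $\tilde F=(H_1,J)$, one checks directly that $2\pi H_p=G(H_1,J)$ where $\partial_1 G=\tau_1$, $\partial_2 G=\tau_2$; this is exactly solvable because the $1$-form $\tau_1\,\DD{}H_1+\tau_2\,\DD{}J$ is closed — this is the content of $\sigma$ being closed in~\eqref{sigma}, after the change of variables from $z$ to $\tilde F$. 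In particular $H_p$ is globally well defined on $F^{-1}(V\setminus\ell)$ up to a single additive constant, because $V\setminus\ell$ is connected (it is a punctured disk with a slit, hence connected and simply connected) and $F^{-1}$ of it is connected by Proposition~\ref{integer:prop}(c).

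Next I would fix the constant. Using the primitive $S$ of $\sigma$ from Definition~\ref{ffinv} and the relations~\eqref{equ:sigma} between $(\sigma_1,\sigma_2)$ and $(\tau_1,\tau_2)$, one has, up to the logarithmic correction, $2\pi H_p = S(\tilde F) - \Re\big((\op{ln} c)\,(H_1+\op{i}J)\big)$-type expression; more cleanly, $\tau_1\,\DD{}H_1+\tau_2\,\DD{}J = \DD{}S - \DD{}\big(\Re(z\,\op{ln}z - z)\big)$ where $z$ is the coordinate $\varphi\circ F$. Since $S(0)=0$ and $z\op{ln}z-z\to 0$ as $z\to 0$, the natural primitive of $\tau_1\,\DD{}H_1+\tau_2\,\DD{}J$ extends continuously by $0$ at the critical value, and hence $H_p$ — normalized to be this primitive divided by $2\pi$ — satisfies $\lim_{m\to m_i}H_p=0$. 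Uniqueness of the constant is then immediate: two functions with Hamiltonian vector field $\mathcal{X}_p$ on the connected set $F^{-1}(V\setminus\ell)$ differ by a constant, and the limit condition at $m_i$ forces that constant to be zero.

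The main obstacle is the regularity and limit statement at the singular leaf: $H_p$ is a priori defined only away from $m_i$, and one must show the primitive of $\tau_1\,\DD{}H_1+\tau_2\,\DD{}J$ extends continuously (with value $0$) across $c_i$ despite the logarithmic singularities of $\tau_1,\tau_2$ separately. This is precisely handled by V\~u Ng\d oc's result~\cite[Prop.~3.1]{vungoc0} quoted before~\eqref{sigma}: the combinations $\sigma_1,\sigma_2$ — equivalently the primitive $S$ — are smooth and single-valued at $0$, so the logarithmic terms assemble into the globally smooth quantity $\Re(z\op{ln}z-z)$ which vanishes at the origin. Granting that, the existence, smoothness on $F^{-1}(V\setminus\ell)$, the vanishing limit, and the uniqueness all follow as above.
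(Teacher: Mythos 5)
Your proposal is correct and follows essentially the same route as the paper: identify $H_p$ locally as a function $f(H_1,J)$ with $2\pi\,\partial_i f=\tau_i$, use the relations~\eqref{equ:sigma} to write $2\pi f(z)=S(z)-\Re(z\ln z - z)+\mathrm{Const}$, and observe that $S(0)=0$ and $z\ln z - z\to 0$ force the limit at the critical value to exist (and equal zero once the constant is set), with uniqueness coming from connectedness of $F^{-1}(V\setminus\ell)$. The paper's proof is just a compressed version of the same argument.
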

  
  \begin{proof}
  Near a regular torus
  $\mathcal{X}_p$ is a Hamiltonian vector field of a function of the
  form $f(H_1,\,J)$, and by construction 
  $
  \partial_i f = \tau_i/2\pi,
  $
  $i \in \{1,\,2\}$.  Therefore, using~(\ref{equ:sigma}) we can check that 
  $
  2\pi
  f(z)=S(c)-\Re(z\ln z -z) + \textup{Const}, 
  $
  where $S$ is smooth at
  the origin, which shows that $f$ has a limit as $z\in
  \R^2\setminus([0,\,\infty)\times\{0\})$ tends to the origin.
  In fact,
  $f$ has a continuous extension to $\R^2$, entailing that $H_p$
  extends to a continuous function on $F^{-1}(V)$.
\end{proof}

  \begin{definition} \label{def:priviledged}
  Let $(M,\, \omega,\, (J,\,H))$ be a $4$\--dimensional semitoric integrable system,
  and let $H_p$ be the unique smooth function defined in
  Lemma \ref{rema:priviledged}.
  We say that the toric momentum map 
  $\nu:=(J,\,H_p)$
  is {\em the
  privileged momentum map for $(J,\,H)$} around the focus\--focus value $c_i$,
  for each $i \in \{1,\,\ldots,\,m_f\}$.
  \end{definition}
  
\begin{remark}  
The map $\nu$  in Definition \ref{def:priviledged} depends
on the cut $\ell$, that is to say, on the sign
  $\epsilon_i$. Moreover, we have the following.
  \begin{itemize}
  \item[(a)]
  If $k_i$ is the twisting index of $c_i$, one has
  \begin{equation}
    \mu = T^{k_i} \nu \qquad \text{ on } F^{-1}(V).
    \label{equ:twist}
  \end{equation}
\item[(b)] If we transform the polygon $\Delta$ by a global affine
  transformation in $T^r\in\mathfrak{I}$ this has no effect on the
  privileged momentum map $\nu$, whereas it changes $\mu$ into $T^r
  \mu$.  
  \end{itemize}
    From the characterization~(\ref{equ:twist}), it follows that
  all the twisting indices $k_i$ are replaced by $k_i+r$.
\end{remark}
With this preparation we are now ready to define the twisting index invariant.
\\
\\
{\em Step 5: the twisting index invariant}. 
We give the definition of the twisting index invariant as an equivalence
class of weighted polygons pondered by a collection of integers.

\begin{prop} \label{k_i:lemma}
If two weighted polygons $\Delta_{\scriptop{weight}}$ and $\Delta'_{\scriptop{weight}}$ lie in the same
$G_{m_f}$\--orbit, for the $G_{m_f}$\--action induced by (\ref{productaction}), then
the twisting indexes $k_i,\,k'_i$ associated to $\Delta_{\scriptop{weight}}$
and $\Delta'_{\scriptop{weight}}$ at their respective focus\--focus
critical values $c_i,\,c'_i$ are equal,
for each $i \in \{1,\,\ldots,\,m_f\}$. 
\end{prop}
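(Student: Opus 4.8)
The plan is to recall that two weighted polygons in the same $G_{m_f}$-orbit differ by the action \eqref{productaction} of an element $((\epsilon'_j)_{j=1}^{m_f},\,\mathrm{Id})$, i.e.\ by a finite sequence of elementary moves in which a single cut direction $\epsilon_i$ is flipped to $-\epsilon_i$, accompanied by the piecewise-affine map $\op{t}_{\vec u}$ with $\vec u=((\epsilon_i-\epsilon'_i)/2)$. By composing one flip at a time, it suffices to treat the case $\vec\epsilon'=\vec\epsilon$ except in a single coordinate $i_0$. What I would then argue is that this single flip has no effect whatsoever on the construction of Steps 1--3 at {\em any} focus-focus value $c_i$: the integer $k_i$ is defined intrinsically in a small neighbourhood $F^{-1}(V_i)$ of the fibre $\mathcal F_{m_i}$ through the vector fields $\mathcal X_J$, $\mathcal X_p$ and $\mathcal X_{\mu_2}$, via $\mathcal X_{\mu_2}=k_i\mathcal X_J+\mathcal X_p$.

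First I would handle the indices $i\neq i_0$. For such $i$, shrinking $V_i$ if necessary we may assume $V_i$ does not meet the cut line $\ell_{i_0}$, so that both $\mu$ and $\mu'=\op{t}_{\vec u}\circ\mu$ agree on $F^{-1}(V_i)$ after the elementary transformation $\op{t}_{\vec u}$ — which acts there either as the identity or as a global $T^{\pm1}$ — ; but by part (b) of the Remark following Definition~\ref{def:priviledged} a global element of $\mathfrak I$ changes $\mu$ to $T^r\mu$ and leaves the privileged momentum map $\nu$ and hence, by \eqref{equ:twist}, the twisting index unchanged only up to the global shift $r$. Here $r=0$ on $V_i$ since the cut $\ell_{i_0}$ lies entirely on one side, so $k_i'=k_i$.

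The real content is the index $i=i_0$ itself, where the cut $\ell_{i_0}$ is replaced by its opposite half-line. Here I would use Step~2: the vector field $\mathcal X_p$ was defined by choosing the angular determination of the multivalued $\tau_2$ so that it is continuous on $V\setminus\ell$, i.e.\ with angle in $[0,2\pi)$ when $\epsilon_{i_0}=1$ and in $[\pi,3\pi)$ when $\epsilon_{i_0}=-1$; in {\em both} cases the resulting $\mathcal X_p$ is the {\em same} smooth vector field on $F^{-1}(V\setminus\{0\})$ minus the relevant half-line, because $\tau_2$ is genuinely well-defined modulo $2\pi\Z$ and the flow of $\mathcal X_p$ only depends on $\tau_2$ mod $2\pi\Z$ together with the smoothness/periodicity requirement, which both choices satisfy. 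Likewise $H_p$ from Lemma~\ref{rema:priviledged} and hence the privileged momentum map $\nu$ are unaffected by the flip. Finally one checks that the generalized toric momentum map $\mu'$ associated to $\op{t}_{\vec u}(\Delta)$ equals $\mu$ near $c_{i_0}$: by Theorem~\ref{keytheorem}(3)--(4) the difference between $\mu$ and $\mu'$ across $\ell_{i_0}$ is exactly absorbed by the definition of $\op{t}^{u_{i_0}}_{\ell_{i_0}}$, so that $\mu'_2=\mu_2$ in a neighbourhood of $c_{i_0}$ on the appropriate side; plugging into \eqref{ki:eq} gives $k'_{i_0}\mathcal X_J+\mathcal X_p=\mathcal X_{\mu'_2}=\mathcal X_{\mu_2}=k_{i_0}\mathcal X_J+\mathcal X_p$, whence $k'_{i_0}=k_{i_0}$.

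I expect the main obstacle to be the bookkeeping at $i=i_0$: one must verify carefully that the two angular conventions for $\tau_2$ in Step~2 produce literally the same Hamiltonian vector field $\mathcal X_p$ (not just vector fields with the same flow), and that the piecewise-affine correction $\op{t}_{\vec u}$ matches the change of $\mu$ precisely on the nose rather than up to an undetermined element of $\mathfrak I$ — this is where the normalization $\sigma_2(0)\in[0,2\pi)$ and the sign conventions \eqref{equ:g}, \eqref{equ:sign} of Step~1 are used, exactly as in the proof of the previous Proposition. Everything else is a localization argument plus the invariance statements already recorded in the Remarks.
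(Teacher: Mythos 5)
Your key claim at $i=i_0$ --- that the two angular conventions for $\tau_2$ ``produce literally the same Hamiltonian vector field $\mathcal X_p$'' because the flow only depends on $\tau_2$ mod $2\pi\Z$ --- is false, and the paper's proof hinges on exactly the opposite observation. The vector field $\mathcal X_p$ is defined via equation~(\ref{equ:Xp}) with $\tau_2$ taken as a genuine real-valued lift, not a class mod $2\pi$; changing the lift by $2\pi$ changes $\mathcal X_p$ by a nonzero multiple of $\mathcal X_J$. Since the determination $[0,2\pi)$ (for $\epsilon_i=1$) and the determination $[\pi,3\pi)$ (for $\epsilon_i=-1$) coincide only on the half where $J<0$ and differ by $2\pi$ on the half where $J>0$, one gets $\mathcal X_{p,-}=\mathcal X_{p,+}$ on the left but $\mathcal X_{p,-}=\mathcal X_{p,+}+2\pi\mathcal X_J$ on the right, so $\nu_-\ne\nu_+$; rather $\nu_-=\op{t}_{\ell_i}\nu_+$. (It is the time-$2\pi$ flows, i.e.\ the periodicity, that are insensitive to the choice of lift, not the vector fields themselves.) The paper's argument therefore does not claim $\nu$ is unchanged; it shows that $\nu$ and $\mu$ each pick up the \emph{same} piecewise-affine factor $\op{t}_{\ell_i}$, and then uses that $\op{t}_{\ell_i}$ commutes with $T$ to conclude from $\mu_\pm=T^{k_{i,\pm}}\nu_\pm$ that $k_{i,+}=k_{i,-}$. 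Your argument, restricted to the $J<0$ side where everything really does agree, would reach the right conclusion, but as written the step you flag as ``the main obstacle'' is exactly where the reasoning breaks.

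A secondary issue: in the $i\ne i_0$ case you assert $r=0$ because $\ell_{i_0}$ lies on one side of $V_i$, but this only gives $r=0$ when $V_i$ is to the \emph{left} of $\ell_{i_0}$; if $V_i$ is to the right, $\op{t}^{u_{i_0}}_{\ell_{i_0}}$ acts there as $T^{u_{i_0}}$ with $u_{i_0}\ne 0$, so the reasoning via Remark (b) is not as immediate as you suggest. (The paper's written proof, for its part, treats only the flip at $c_i$ itself, so the bookkeeping across other cuts is left implicit there as well, but your justification as stated would not close the gap.)
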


\begin{proof}
For $\epsilon_i=\pm 1$, we denote by
$\mu_\pm$ and $\nu_\pm$, as in equation~(\ref{equ:twist}) above, the
generalized toric momentum map and the privileged momentum map, c.f.
Definition \ref{def:priviledged} at
$c_i$. 

With the notations of Section~\ref{sec:polytope}, we have
$$
\mu_-=\op{t}_{\ell_i}\mu_+. 
$$

On the other hand, from the definition
of $\tau_2$ in each case, we see that
$\mathcal{X}_{p,-}=\mathcal{X}_{p,+}$ on the left-hand side of $\ell$
(that is to say, $J<0$), while
$$
\mathcal{X}_{p,-}=\mathcal{X}_{p,+}+2\pi\mathcal{X}_J
$$
on the right hand side ($J>0$). This means that
$\nu_-=\op{t}_{\ell_i}\nu_+$. From the characterization of the
twisting index by~(\ref{equ:twist}), using that $\op{t}_{\ell_i}$
commutes with $T$, we see that $k_{i,+}=k_{i,-}$.
\end{proof}

Recall the groups $G_s$ and $\mathcal{G}$ given by (\ref{Gs:eq}) and
(\ref{calG:eq}) respectively,
and the action of $G_s \times \mathcal{G}$ on $\mathcal{W}\op{Polyg}_s(\R^2)$,
c.f. Definition \ref{productaction}.
Consider the action
of the product group $G_s \times \mathcal{G}$
on the space  $\mathcal{W}\op{Polyg}_s(\R^2) \times \Z^{m_s} $:
the product
$$
((\epsilon'_j)_{j=1}^s),\,T^k) 
\star \Big( \Delta,\,(\ell_{\lambda_j})_{j=1}^s,\, (\epsilon_j)_{j=1}^s,\, (k_i)_{i=1}^{s}\Big)
$$
is defined to be
\begin{eqnarray} \label{newproductaction}
\Big(\op{t}_{\vec{u}}(T^k(\Delta)),\,(\ell_{\lambda_j})_{j=1}^s,\, (\epsilon'_j\,\epsilon_j)_{j=1}^s,
\, (k_i+k)_{i=1}^{s}\Big). 
\end{eqnarray}
where $\vec u=(\epsilon_i-\epsilon'_i)/2)_{i=1}^s$. Here
$T$ is the $2$ by $2$ matrix (\ref{equ:Tk}) and $\op{t}_{\vec{u}}$ is of the form
(\ref{tn:eq}).

\begin{definition} \label{indexinv}
The {\em twisting\--index invariant} of $(M,\, \omega,\, (J,\,H))$
is the $(G_{m_f} \times \mathcal{G})$\--orbit of weighted polygon pondered
by twisting indexes at the focus\--focus singularities of the system given by
\begin{eqnarray} 
(G_{m_f} \times \mathcal{G})\star \Big(\Delta,\, (\ell_j)_{j=1}^{m_f},\,
(\epsilon_j)_{j=1}^{m_f},\, (k_i)_{i=1}^{m_f}\Big) \in (\mathcal{W}\op{Polyg}_{m_f}(\R^2) \times \Z^{m_f})/(G_{m_f} \times \mathcal{G}),
\end{eqnarray}
where $\mathcal{W}\op{Polyg}_{m_f}(\R^2)$ is defined in Definition \ref{semitoric-pol:def}
and the action of $G_{m_f} \times \mathcal{G}$ on $\mathcal{W}\op{Polyg}_{m_f}(\R^2) \times \Z^{m_f}$
is given by  (\ref{newproductaction}).
\end{definition}

Here again, our definition is invariant under isomorphism.
\begin{lemma} \label{v)}
Let $(M_1, \, \omega_1, (J_1,\,H_1)),\, (M_2, \, \omega_2, (J_2,\,H_2))$ be isomorphic
$4$\--dimensional
semitoric integrable systems. Then
their corresponding twisting\--index invariants are equal.
\end{lemma}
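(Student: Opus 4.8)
The statement to prove is Lemma~\ref{v)}: isomorphic semitoric systems have equal twisting-index invariants. The plan is to reduce this to the analogous statement for the polygon invariant (Lemma~\ref{iii)}, already known) together with a check that, for a \emph{fixed} choice of homeomorphism $f$ from Theorem~\ref{keytheorem}, an isomorphism transports the twisting indices $k_i$ correctly. Recall the isomorphism is a symplectomorphism $\varphi\colon M_1\to M_2$ with $\varphi^*(J_2,H_2)=(J_1,f(J_1,H_1))$; in particular $\varphi$ conjugates the two $S^1$\--actions (it sends $J_1$ to $J_2$, hence $\mathcal{X}_{J_1}$ to $\mathcal{X}_{J_2}$) and it maps the focus\--focus fiber $\mathcal{F}_{m_i}$ of the first system to a focus\--focus fiber of the second, matching up the focus\--focus critical values after the affine identification of the bases coming from $\varphi$. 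So there is a bijection of the focus\--focus values $c_i^1\leftrightarrow c_i^2$, and $m_f^1=m_f^2$ by Lemma~\ref{i)}.

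First I would fix, for system $1$, a sign vector $\vec\epsilon$, a choice of initial action variables $f_0$, and the resulting homeomorphism $f_1$ and polygon $\Delta^1$; I would then \emph{transport all of this through $\varphi$} to obtain a legitimate choice $f_2$, $\Delta^2$ for system $2$ with the \emph{same} $\vec\epsilon$ and corresponding data. The key point is that all the ingredients entering the definition of $k_i$ in Step~1--3 are intrinsic to the singular Lagrangian foliation near $m_i$ together with the $S^1$\--action generated by $J$: Eliasson's normal form~(\ref{equ:eliasson}) with the sign normalizations~(\ref{equ:g})--(\ref{equ:sign}), the periodic field $\mathcal{X}_p$ built from $\tau_1,\tau_2$ as in~(\ref{equ:Xp}), and the generalized toric momentum map $\mu$ attached to $\Delta$. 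Since $\varphi$ is a symplectomorphism intertwining $(J_1,H_1)$ with $(J_2,f(J_1,H_1))$ and the reparametrization $f$ acts only on the $H$\--component, it sends an Eliasson chart $\phi_1$ for $m_i$ to an Eliasson chart $\phi_2$ for $\varphi(m_i)$ satisfying the same sign constraints (the constraints~(\ref{equ:g}),(\ref{equ:sign}) are phrased purely in terms of $J$ and the orientation of the cut $\ell_i$, both preserved), it sends $\mathcal{X}_{p,1}$ to $\mathcal{X}_{p,2}$ because $\tau_1,\tau_2$ are defined by the period--and--logarithm conditions~(\ref{equ:sigma}) which $\varphi$ respects, and it sends the toric momentum map $\mu_1$ (for $\Delta^1$) to $\mu_2$ (for the transported $\Delta^2$). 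Pulling back the defining relation~(\ref{ki:eq}), $\mathcal{X}_{\mu_2}=k_i\mathcal{X}_J+\mathcal{X}_p$, through $\varphi$ then yields $k_i^1=k_i^2$ for each $i$.

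Second, I would assemble this into the orbit statement. Lemma~\ref{iii)} gives that the semitoric polygon invariants agree: the $(G_{m_f}\times\mathcal{G})$\--orbit of $(\Delta^1,(\ell_j)_j,(\epsilon_j)_j)$ equals that of $(\Delta^2,(\ell_j)_j,(\epsilon_j)_j)$. By the previous paragraph, for the \emph{specific} representatives obtained by transporting through $\varphi$ the integers $k_i$ coincide as well, so the augmented tuples $(\Delta^1,(\ell_j)_j,(\epsilon_j)_j,(k_i)_i)$ and $(\Delta^2,(\ell_j)_j,(\epsilon_j)_j,(k_i)_i)$ are literally equal, hence certainly in the same $(G_{m_f}\times\mathcal{G})$\--orbit under the action~(\ref{newproductaction}). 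It remains only to observe that a \emph{different} initial choice (of $f_0$ or of $\vec\epsilon$) for system $1$ changes the representative of the twisting\--index invariant by an element of $G_{m_f}\times\mathcal{G}$: a change of $f_0$ replaces $\Delta$ by $T^r\Delta$ and, by~(\ref{equ:twist}) and remark~(a) following Definition~\ref{def:priviledged}, replaces every $k_i$ by $k_i+r$ while leaving $\nu$ and the $\epsilon_j$ alone --- exactly the $\mathcal{G}$\--part of~(\ref{newproductaction}); a change of $\vec\epsilon$ is handled by Proposition~\ref{k_i:lemma}, which says the $k_i$ are unchanged under the $G_{m_f}$\--action, matching the $G_{m_f}$\--part of~(\ref{newproductaction}). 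Therefore the orbit is independent of all choices and is an isomorphism invariant.

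\textbf{Main obstacle.} The delicate point is verifying that the transported Eliasson chart $\phi_2$ really satisfies the sign normalizations~(\ref{equ:g})--(\ref{equ:sign}) and that the correspondence of cuts $\ell_i$ is the right one; equivalently, that the auxiliary reparametrization $f$ (which is the identity in the $J$\--direction but nontrivial in $H$) does not disturb the data used to pin down $k_i$. Concretely one must check that $\varphi$ maps the vertical half\--line $\ell_i$ of system~$1$, viewed inside the base via the generalized toric chart, to the corresponding $\ell_i$ of system~$2$ with the same orientation $\epsilon_i$ --- this is where one uses that $f$ preserves $J$ (Theorem~\ref{keytheorem}(3)) so that ``vertical'' is intrinsic, and that the sign normalizations are expressed only through $J$ and this orientation. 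Once this compatibility is in hand, the pull\--back of~(\ref{ki:eq}) is a formality.
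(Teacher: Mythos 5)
The paper gives no explicit proof of Lemma~\ref{v)}; it is stated immediately after the phrase ``our definition is invariant under isomorphism,'' with the justification left implicit in the construction (Proposition~\ref{k_i:lemma} plus the two remarks following Definition~\ref{def:priviledged}). Your proposal supplies exactly the argument that is implicit, and it is consistent with how the paper actually uses these facts later: in Step~1 of the proof of Theorem~\ref{mainthm} the authors invoke, in precisely your spirit, that $\nu_1$ and $\nu_2$ are semi-global symplectic invariants so that $\nu_1=\nu_2\circ\psi$ under a foliation-intertwining symplectomorphism. Pairing that with the fact that a generalized toric momentum map transports to a generalized toric momentum map for the same polygon (your computation that $f_2 = f_1\circ h^{-1}$ with $h(x,y)=(x,f(x,y))$ an integral-affine isomorphism $B_1\to B_2$), pulling back $\mathcal{X}_{\mu_2} = k_i\mathcal{X}_J + \mathcal{X}_p$ does give $k_i^1=k_i^2$, and the choice-independence is then Proposition~\ref{k_i:lemma} and remark~(a)--(b). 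So the approach is right and matches what the paper would say if it spelled it out.

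One point to tighten: you say you transport the data to system~$2$ ``with the same $\vec\epsilon$,'' but if the reparametrization $f$ satisfies $\partial f/\partial H < 0$ then the induced map $h(x,y)=(x,f(x,y))$ reverses the $y$-direction on each vertical line, so the transported cut orientations are $-\vec\epsilon$, not $\vec\epsilon$, and correspondingly the transported Eliasson chart fails the normalization~(\ref{equ:sign}) until you compose with $(x,y,\xi,\eta)\mapsto(-\xi,-\eta,x,y)$. You flag this as the ``main obstacle'' but the resolution you sketch --- that the sign constraints are ``phrased purely in terms of $J$ and the orientation of the cut, both preserved'' --- is not quite accurate as stated, since the cut orientation need \emph{not} be preserved. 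The correct closing move is the one already available in your second paragraph: a sign-reversing $f$ simply produces the representative $(\Delta, (\ell_j)_j, (-\epsilon_j)_j, (k_i)_i)$, and Proposition~\ref{k_i:lemma} guarantees the $k_i$ are unchanged under flipping the $\epsilon_j$, so the two augmented tuples still lie in the same $(G_{m_f}\times\mathcal{G})$-orbit. With that sentence added the argument is complete.
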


\begin{remark}
  We would like to emphasize again 
  that the twisting index is
  \emph{not} a semiglobal invariant of the singular fibration in a
  neighbourhood of the focus-focus fibre. Such semiglobal fibrations
  are completely classified in~\cite{vungoc0}, and the twisting index
  does not play any role there. It is instead a \emph{global}
  invariant characterizing the way the fibers near a particular
  focus\--focus point stand with respect to the rest of the fibration.
\end{remark}

\section{Main Theorem: Statement}
\label{sec:main}

To each semitoric system we assign a list of invariants as above.

\begin{definition} \label{listofinvariants}
  Let $(M, \, \omega, \, (J, \, H))$ be a $4$\--dimensional semitoric integrable
  system.  The {\em list of invariants of  $(M, \, \omega,
    \, (J, \, H))$} consists of the following items.
  \begin{itemize}
  \item[(i)] The integer number $0 \le m_f<\infty$ of focus\--focus singular points, see
    Section \ref{mfsection}.
  \item[(ii)] The $m_{f}$\--tuple 
    $((S_i)^{\infty})_{i=1}^{m_{f}}$, where
    $(S_i)^{\infty}$ is the Taylor series of the
    $i^{\scriptop{th}}$ focus\--focus point, see Section
    \ref{focussection}.
  \item[(iii)] The semitoric polygon invariant: the
  $(G_{m_f} \times \mathcal{G})$\--orbit 
  $$
  (G_{m_f} \times \mathcal{G}) \cdot \Delta_{\scriptop{weight}}
 \,\, \in \,\, \mathcal{W}\op{Polyg}(\R^2)/ (G_{m_f} \times \mathcal{G})
  $$
  of the weighted polygon
  $\Delta_{\scriptop{weight}}:=\Big(\Delta,\, (\ell_j)_{j=1}^{m_f},\,
(\epsilon_j)_{j=1}^{m_f}\Big)$, c.f. Definition \ref{generalizedpolytope}.  
 
   \item[(iv)] The $m_f$\--tuple of integers
    $(h_i)_{i=1}^{m_{f}}$, where $h_i$ is the height of
    the $i^{\scriptop{th}}$ focus\--focus point, see
    Section \ref{volumesection}.
  \item[(v)] 
 The twisting\--index invariant: the
$(G_{m_f} \times \mathcal{G})$\--orbit
$$
(G_{m_f} \times \mathcal{G}) \star \Big(\Delta_{\scriptop{weight}},\, (k_i)_{i=1}^{m_f}\Big)  \,\,\in\,\,
(\mathcal{W}\op{Polyg}(\R^2) \times \Z^{m_f})/ (G_{m_f} \times \mathcal{G})
$$
of the weighted polygon
pondered by the twisting\--indexes
$\Big(\Delta_{\scriptop{weight}},\, (k_i)_{i=1}^{m_f}\Big)$,  
 c.f. Definition \ref{indexinv}.  
\end{itemize}
\end{definition}

In the above list invariant (v)
determines invariant (iii), so we could have ignored the latter.  We
have kept this list as it appears naturally in the construction of the
invariants. Indeed the definition of invariant (iii) is needed to
construct invariant (v). One may also argue that it is worthwhile for
practical purposes to list (iii), as it is easier to compute than (v)
and hence if two systems do not have the same invariant (iii) we know
they are not isomorphic without having to compute (v).

 Recall that if
$(M_1,\, \omega_1,\, (J_1,\,H_1))$ 
and
$(M_2,\, \omega_2,\, (J_2,\,H_2))$
are $4$\--dimensional
semitoric integrable systems,
we say that they
  are isomorphic  if there exists a symplectomorphism
$$
\varphi \colon M_1 \to M_2,\,\,\,\,\,\,\, \textup{such that}\,\,\,\,\,
\varphi^*(J_2,\,H_2)=(J_1,\,f(J_1,\,H_1)).
$$
for some smooth function $f$.
Our main theorem is the following.

\begin{theorem} \label{mainthm}
Two $4$\--dimensional semitoric integrable systems $(M_1, \, \omega_1, (J_1,\,H_1)),\, (M_2, \, \omega_2, (J_2,\,H_2))$ 
are
 isomorphic if and only if the list of
  invariants (i)\--(v), as in Definition \ref{listofinvariants}, of $(M_1, \, \omega_1, (J_1,\,H_1))$  is equal to the list of invariants (i)\--(v)
  of $(M_2, \, \omega_2, (J_2,\,H_2))$.
\end{theorem}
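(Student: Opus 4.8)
The plan is to prove the two implications separately, with the forward direction ($\Rightarrow$) already handled by Lemmas \ref{i)}--\ref{v)}, so the real content is the converse. Assume therefore that $(M_1,\omega_1,(J_1,H_1))$ and $(M_2,\omega_2,(J_2,H_2))$ have identical invariants (i)--(v). The strategy is to build the symplectomorphism $\varphi\colon M_1\to M_2$ by a cut-and-paste argument: decompose each $M_j$ into the union of (a) a neighborhood of each focus-focus fiber $\mathcal F_{m_i^j}$, (b) neighborhoods of the elliptic (boundary) strata, and (c) the ``bulk'' over the set of regular values, then construct $\varphi$ piece by piece and glue. First I would use the equality of invariant (i) to match up the focus-focus points $c_i^1\leftrightarrow c_i^2$, and invariant (iii) to fix, once and for all, a common representative weighted polygon $\Delta_{\mathrm{weight}}$ (after choosing the same cut directions $\vec\epsilon$ and the same initial action variables, which is legitimate since both are only defined up to the $G_{m_f}\times\mathcal G$-action). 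This yields generalized toric momentum maps $\mu^j$ on $M_j$ with the same image $\Delta$, and hence a canonical identification of the affine bases $B_r^1\cong B_r^2$ intertwining $\mu^1$ and $\mu^2$.

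Next I would reconstruct the bulk. Over $\mathrm{Int}(\Delta)\setminus\ell^{\vec\epsilon}$ the generalized toric momentum map exhibits $M_j$ (minus the cut fibers) as a Lagrangian torus bundle, and the action coordinates supplied by $\mu^j$ fix the symplectic structure there up to the choice of angle coordinates; the ambiguity in the angle coordinates is exactly a choice of Lagrangian section, i.e.\ a $1$-cocycle with values in the sheaf of fiberwise-constant closed $1$-forms. Invoking connectedness of the fibers (Proposition \ref{integer:prop}(c)) together with the vanishing of the relevant Čech $H^1$ on the contractible pieces, one trivializes these bundles compatibly; the obstruction to gluing across a cut $\ell_i^{\epsilon_i}$ and around the focus-focus point is precisely measured by the monodromy/twisting data. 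Here is where invariants (ii), (iv), (v) enter: the Taylor series invariant $(S_i)^\infty$ determines, via V\~u Ng\d oc's semiglobal classification \cite{vungoc0}, a symplectomorphism between neighborhoods of $\mathcal F_{m_i^1}$ and $\mathcal F_{m_i^2}$ (up to the leftover discrete freedom); the height $h_i$ pins down where this local model sits vertically inside the polygon (by the remark identifying $h_i$ with the Liouville volume of $Y_i^-$), so the local pieces can be positioned to agree with the bulk identification; and the twisting index $k_i$ measures exactly the integer ambiguity in matching the periodic Hamiltonian $\mathcal X_p$ of the local model to the global generalized toric vector field $\mathcal X_{\mu_2}$ (equation (\ref{ki:eq})), so equality of the twisting-index invariant is what makes the local symplectomorphisms extend across the cuts and glue to the bulk symplectomorphism. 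Finally, near the elliptic strata one uses the (equivariant) Morse--Bott normal form for codimension-one elliptic and elliptic-elliptic singularities: these are rigid once the affine structure at the boundary is fixed, so the boundary pieces are determined by $\Delta$ alone and glue automatically.

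The remaining step is to assemble these local symplectomorphisms into a single global $\varphi$. I would cover $M_j$ by the three families of open sets above, note that on all overlaps the locally-constructed maps differ by a symplectomorphism preserving the fibration and the momentum map, hence by a fiberwise translation, and then run a partition-of-unity / Čech patching argument in the sheaf of such translations, whose obstruction class lives in an $H^1$ that vanishes because the polygon $\Delta$ is contractible and the only potentially nontrivial loops encircle the focus-focus points — and those have been killed by the agreement of invariants (ii), (iv), (v). The resulting $\varphi$ satisfies $\varphi^*J_2=J_1$ by construction (all our momentum maps preserve $J$, Theorem \ref{keytheorem}(3)), and $\varphi^*H_2=f(J_1,H_1)$ for the smooth $f$ recording the change of second action coordinate, which is exactly the required notion of isomorphism. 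The main obstacle I expect is the gluing near the focus-focus fibers: reconciling V\~u Ng\d oc's semiglobal model (which carries its own residual discrete and flat ambiguities) with the global toric picture while simultaneously respecting the polygon, the height, and the twisting index requires showing that these finitely many invariants exhaust \emph{all} the gluing freedom — in other words, that no further hidden invariant is lurking at the focus-focus fibers. Verifying this exhaustiveness, i.e.\ that the cocycle obstruction is fully captured by (ii), (iv), (v), is the technical heart of the argument.
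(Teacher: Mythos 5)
Your overall plan matches the paper's: reduce to a common affine base, produce local symplectomorphisms via the appropriate normal-form theorems (Liouville--Mineur--Arnold, Dufour--Molino/Eliasson for elliptic, V\~u Ng\d oc for focus--focus), and glue. You also correctly assign the role of each invariant. But two technical points that your sketch leaves open are precisely where the paper does most of its work, and both are genuine gaps rather than routine bookkeeping.

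First, you conclude by asserting that $\varphi^*H_2=f(J_1,H_1)$ ``for the smooth $f$ recording the change of second action coordinate,'' but smoothness of $f$ does not come for free. If you intertwine the generalized toric maps $\mu_1$ and $\mu_2$, then $F_2\circ\varphi=(J_1,g(J_1,H_1))$ with $g$ built from the homeomorphisms $f_1,f_2$ of Theorem~\ref{keytheorem}, which are \emph{not} diffeomorphisms across $\ell^{\vec\epsilon}$ or at the boundary. On $B_r$ the implicit function theorem rescues you, but at elliptic boundary values and at the focus--focus values $c_i$ it does not. The paper's Step~1 (Claim~\ref{claim2}) is exactly the proof that the induced reparametrization $h=g_1\circ g_2^{-1}$ of the images extends smoothly across the cuts and at $c_i$, and this proof already consumes invariants (ii), (iv), (v) --- notably the equality of the Taylor series invariants via \cite[Th.~2.1]{vungoc0} and the equality of the twisting indices via~(\ref{equ:twist}). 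Skipping this step leaves the output only a homeomorphism-conjugacy, not an isomorphism in the stated sense.

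Second, your gluing step relies on the transition maps on overlaps being ``fiberwise translations'' patched by a vanishing \v{C}ech $H^1$. That description is fine over the regular locus, but the cover necessarily includes open sets whose preimage contains rank-1 (elliptic) fibers, where the fiber is not a torus and the transition map is not a translation. The paper handles this by showing, via the homogeneous extension of Miranda--Zung (Claim~\ref{claim:flow} and Section~\ref{sec:miranda}), that every transition symplectomorphism preserving the momentum map is the time-1 flow of a Hamiltonian of the form $h_{\alpha\beta}\circ F_2$ (even in the elliptic model $(q_1,\xi_2)$), so that it can be cut off by a bump function in the base and extended across the singular fiber. This is a concrete replacement for the sheaf-theoretic vanishing you invoke; without it, the assertion that elliptic pieces ``glue automatically'' is unjustified. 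You flagged the gluing as ``the technical heart,'' which is the right instinct --- the missing ingredient is precisely this Hamiltonian realization of the transition maps, together with the exhaustion-by-finite-subcovers / projective-limit argument needed when $M$ is non-compact.
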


The proof of Theorem \ref{mainthm} is sufficiently involved that is better
organized in an independent section. In the proof we combine notable
results of several authors, in particular
Eliasson, Duistermaat, Dufour\--Molino, Liouville\--Mineur\--Arnold
and  V\~u Ng\d oc. We combine these results with new ideas
to construct explicitly an isomorphism between two semitoric integrable
systems that have the same invariants, in the spirit of  Delzant's proof 
\cite{delzant} for the case when the system defines a Hamiltonian
$2$\--torus action. Because in our context we have focus\--focus
singularities a number of delicate problems arise that one has to deal with
to construct such an isomorphism. 
As a matter of fact,
it is remarkable how the behavior of the system 
near a particular singularity
has a subtle global effect on the system.

\section{Proof of Main Theorem}
\label{sec:proof}

The left\--to\--right implication 
follows from putting together lemmas \ref{i)}, \ref{ii)}, \ref{iii)}, \ref{iv)} and \ref{v)}.
The proof of the right\--to\--left implication breaks into three steps.
 Let $F_1=(J_1,\,H_1)$ and let $F_2=(J_2,\,H_2)$. 
\begin{itemize}
\item
First, we reduce to a case where the images $F_1(M_1)$ and $F_2(M_2)$
are equal. 
\item
Second, we prove that this common image can be covered by
open sets $\Omega_\alpha$, above each of which $F_1$ and $F_2$ are
symplectically interwined. 
\item
The last step is to glue together these
local symplectomorphisms, in this way constructing a global
symplectomorphism $\phi:M_1\to M_2$ such that 
$F_1=F_2\circ\phi$.
\end{itemize}
{\em Step 1}: \emph{first reduction}.  The goal of this step is to
reduce to a particular case where the images $F_1(M_1)$ and $F_2(M_2)$
are equal. 
For simplicity we assume that the invariants of $F_1$ are indexed as in Definition
\ref{listofinvariants} with an additional upper index $1$, and similarly for $F_2$.

\begin{figure}[htbp]
 \begin{center}
    \includegraphics{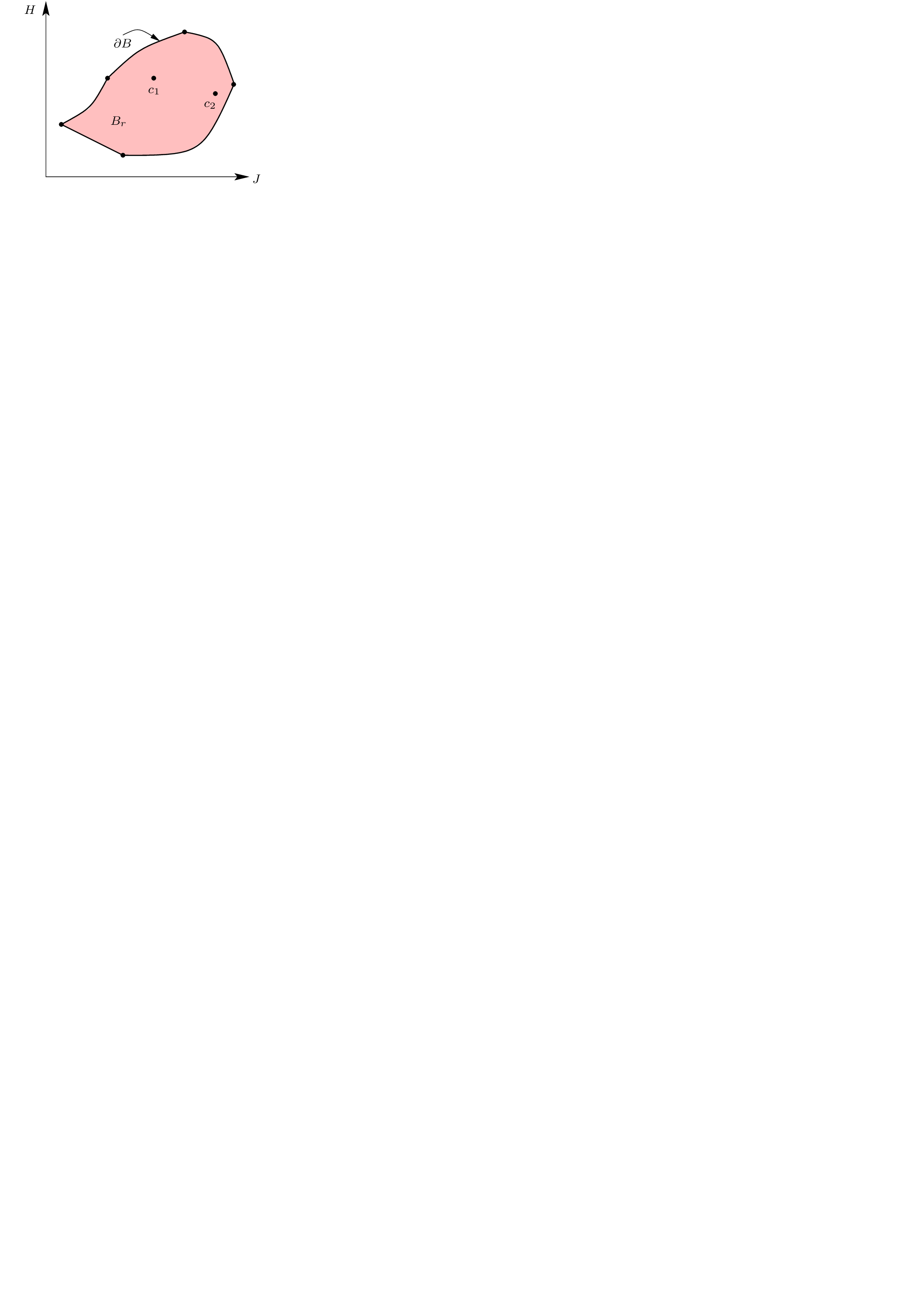}
    \caption{In Step 1 we prove that we can assume that the ``momentum'' images
    $F_1(M_1)$ and $F_2(M_2)$  are
    equal to the same curved polygon $B$. To emphasize this we index the axes
    as $J$ and $H$ without lower indexes.}
    \label{fig:image}
  \end{center}
\end{figure}

Because both systems have the same invariants (i), (iii) and (v),
we may choose a weighted polygon pondered by the twisting
indexes
$
\Big( \Delta_{\scriptop{weight}},\, (k_i)_{i=1}^{m_f}\Big),
$
where
$
\Delta_{\scriptop{weight}}=\Big(\Delta,\, (\ell_j)_{j=1}^{m_f},\,
(\epsilon_j)_{j=1}^{m_f} \Big),
$
and which is inside of the
$(\mathcal{W}\op{Polyg}(\R^2) \times \Z^{m^1_f})/(G_{m^1_f} \times \mathcal{G})=
(\mathcal{W}\op{Polyg}(\R^2) \times \Z^{m_f^2})/(G_{m^2_f} \times \mathcal{G})$\--orbit
of weighted polygons pondered by twisting indexes:
\begin{eqnarray} \label{for2:eq}
&&
(G_{m^1_f} \times \mathcal{G})\star \Big( \Delta^1_{\scriptop{weight}}\, (k^1_i)_{i=1}^{m^1_f}\Big)=
(G_{m^2_f} \times \mathcal{G})\star \Big(\Delta^2_{\scriptop{weight}},\, (k^2_i)_{i=1}^{m^2_f} \Big), 
\end{eqnarray}
where we are writing
$\Delta^i_{\scriptop{weight}}=(\Delta^i,\, (\ell^i_j)_{j=1}^{m^i_f},\,
(\epsilon^i_j)_{j=1}^{m^i_f}\Big)$, for $i \in \{1,\, 2\}$.

Let
$\mu_1,\,\mu_2$ respectively be associated toric momentum maps to $F_1$ and $F_2$
for the polygon $\Delta$ in Theorem \ref{keytheorem} and Definition \ref{toricmap}.
There are homeomorphisms $g_1,\,g_2 \colon
\Delta \to \Delta$ such that
$$
F_1=g_1 \circ \mu_1, \, \, \, \, F_2=g_2
  \circ \mu_2.
 $$
Consider the map $ h:=g_1 \circ g_2^{-1}$. We wish to replace $F_2$ by
$\widetilde{F}_2=h \circ F_2$. Then, obviously, 
$$\op{Image}(\widetilde{F}_2)=
g_1(\Delta)=\op{Image}({F}_1).
$$
In order for $\widetilde{F}_2$ to define a semi\--toric completely
integrable system isomorphic to $F_2$, we need to prove that
$h(x,\,y)=(x,\,f(x,y))$ for some smooth function $f$. In fact, it follows
from Theorem \ref{keytheorem} that $h$ has this form, but for some $f$ which is \emph{a
  priori} not smooth. The crucial point here is to show that, because
$F_1$ and $F_2$ have the same invariants, $h$ is in fact smooth.

\begin{claim}
  \label{claim2}
  The map $h$
  extends to an $S^1$\--equivariant diffeomorphism of a neighborhood of $F_2(M_2)$
  into a neighborhood of $F_1(M_1)$.
\end{claim}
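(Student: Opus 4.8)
The plan is to build $h$ piece by piece, matching the local models provided by the invariants, and to check smoothness across the cuts $\ell_i^{\epsilon_i}$ using the equality of twisting indices. First I would recall that by Theorem~\ref{keytheorem}(2)--(3), on $B_r\setminus\ell^{\vec\epsilon}$ both $\mu_1$ and $\mu_2$ are integral-affine charts for the \emph{same} affine structure (the one induced by $J$ together with the regular Lagrangian fibration), hence on each connected piece $h=g_1\circ g_2^{-1}$ is affine of the form $(x,y)\mapsto(x,\alpha x+\beta y+\gamma)$; moreover Theorem~\ref{keytheorem}(3) forces the first component to be exactly $x$ and (since both maps send the structure to the \emph{standard} one) $\beta=\pm1$, and orientation considerations give $\beta=1$. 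So on $B_r\setminus\ell^{\vec\epsilon}$ the map $h$ is locally of the form $(x,y)\mapsto(x,\,y+(\text{affine in }x))$, in particular smooth there, with an a.e.-locally-constant derivative $\op{d}h=T^{m(x,y)}$ for some locally constant integer-valued function $m$.

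Next I would analyze $h$ across a cut. Fix a focus-focus value $c_i$ and the half-line $\ell_i^{\epsilon_i}$. By Theorem~\ref{keytheorem}(4) applied to both $F_1$ and $F_2$, the jump of $\op{d}\mu_j$ across $\ell_i$ at a point $c\neq c_i$ is multiplication by $T^{k(c)}$ where $k(c)$ is the multiplicity of $c$ — and this multiplicity is a property of the common image $B$ and the common polygon $\Delta$, hence is the same for both systems. Therefore the jumps cancel in $h=g_1\circ g_2^{-1}$ and $h$ extends smoothly across $\ell_i^{\epsilon_i}\setminus\{c_i\}$; that is, the locally constant integer $m(x,y)$ above actually does not jump across the cuts away from the focus-focus values. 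This already shows $h$ is smooth on a neighborhood of $B\setminus\{c_1,\dots,c_{m_f}\}$ intersected with the interior, and the $S^1$-equivariance (i.e. the $(x,y)\mapsto(x,\dots)$ form) is automatic from Theorem~\ref{keytheorem}(3). What remains, and what I expect to be the genuine content, is smoothness of $h$ in a full neighborhood of each focus-focus value $c_i$ itself.

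Near $c_i$ I would pass to the privileged momentum map. By Definition~\ref{def:priviledged} and equation~(\ref{equ:twist}), $\mu_j = T^{k_i^j}\nu_j$ near $c_i$, where $\nu_j=(J,\,H_p^j)$ and $H_p^j$ is the canonical function of Lemma~\ref{rema:priviledged}. The point is that the privileged momentum maps $\nu_1,\nu_2$ are constructed entirely from the semiglobal data near the focus-focus fibre: the functions $\tau_1,\tau_2$, equivalently the one-form $\sigma$, equivalently the Taylor invariant $(S_i)^\infty$ of Definition~\ref{ffinv} — together with, possibly, a flat correction. Since $F_1$ and $F_2$ have the same invariant~(ii), the maps $g_j$ relating $F_j$ to $\nu_j$ differ only by a diffeomorphism of $\R^2$ fixing $J$ whose $\infty$-jet at $c_i$ is determined, so $\nu_1\circ\nu_2^{-1}$ is smooth at $c_i$ (here one invokes V\~u Ng\d oc's result \cite{vungoc0} that $\sigma$, hence $H_p$, is smooth and single-valued near $c_i$, so that $\nu_j$ genuinely extends smoothly across $c_i$ up to the chosen cut). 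Finally, since by hypothesis~(\ref{for2:eq}) the twisting indices agree, $k_i^1=k_i^2=:k_i$, and therefore near $c_i$
\[
h=g_1\circ g_2^{-1}=(T^{k_i}\nu_1)\circ(T^{k_i}\nu_2)^{-1}=T^{k_i}\circ(\nu_1\circ\nu_2^{-1})\circ T^{-k_i},
\]
which is smooth. Patching the smooth extension near each $c_i$ with the smooth extension near $B\setminus\{c_1,\dots,c_{m_f}\}$ obtained above yields the desired $S^1$-equivariant diffeomorphism $h$ of a neighborhood of $F_2(M_2)$ onto a neighborhood of $F_1(M_1)$.

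\textbf{Main obstacle.} The delicate step is the one near $c_i$: one must show that the equality of Taylor invariants is enough to make $\nu_1\circ\nu_2^{-1}$ \emph{smooth} at $c_i$ — not merely having the same $\infty$-jet — and this is where the single-valuedness and smoothness of $\sigma$ from \cite{vungoc0}, plus the normalization $\sigma_2(0)\in[0,2\pi)$, are essential; the twisting-index equality $k_i^1=k_i^2$ is precisely what is needed to kill the remaining discrepancy, which is why invariant~(v) enters here rather than only in the later gluing step.
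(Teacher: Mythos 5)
Your handling of $h$ away from the focus-focus values is fine and essentially matches the paper's own route: the paper carries this out with explicit affine charts $\varphi_z^\pm,\psi_y^\pm$ and an appeal to continuity of $\op{d}h$, but the underlying idea --- that the jumps of $\op{d}f_1$ and $\op{d}f_2$ across a cut are governed by the same multiplicity $k(c)$, determined by the shared invariants (i), (iii), (iv) --- is the same as yours. (Note only that at this stage one cannot yet speak of ``the common image $B$,'' since equality of the momentum images is precisely what Claim~\ref{claim2} is being used to arrange; the equality of the weighted polygon and heights is what actually pins down $k(c)$.)

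The genuine gap is at the focus-focus value $c_i$ itself, and you correctly flag the obstacle but do not overcome it. The formulas $g_1=T^{k_i}\nu_1$ and $h=T^{k_i}\circ(\nu_1\circ\nu_2^{-1})\circ T^{-k_i}$ do not type-check: $\nu_1,\nu_2$ are momentum maps on the two \emph{different} manifolds $M_1,M_2$, whereas $g_1,g_2$ are homeomorphisms between subsets of $\R^2$; one cannot compose or equate them without an intervening map between $M_1$ and $M_2$. Even after replacing $\nu_j$ by its image-level version $N_j$ (so that $h=N_1^{-1}\circ N_2$, with the $T^{k_i}$-conjugation cancelling, not merely commuting), each $N_j$ is only known to be \emph{continuous} at $c_i$ (that is what Lemma~\ref{rema:priviledged} gives), so smoothness of $h$ at $c_i$ does not follow. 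The result from~\cite{vungoc0} that you invoke --- smoothness and single-valuedness of $\sigma$, i.e.~Prop.~3.1 of that paper --- only yields the regularity of $H_p$ away from the cut that was already used to define the privileged momentum map. What is missing is V\~u Ng\d oc's \emph{semiglobal classification theorem} \cite[Th.~2.1]{vungoc0}, which converts equality of the Taylor series $(S_i)^\infty$ into an actual symplectomorphism $\phi:F_1^{-1}(V(c^1))\to F_2^{-1}(V(c^2))$ together with a \emph{smooth} local diffeomorphism $g:V(c^2)\to V(c^1)$ with $F_1=g\circ F_2\circ\phi$. The paper then uses the equality of twisting indices to get $\mu_1=\mu_2\circ\phi$, deduces $h\circ g^{-1}=\textup{Id}$ on regular values, and concludes by continuity that $h=g$ is smooth. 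Without the classification theorem there is no bridge between $M_1$ and $M_2$ along which the privileged momentum maps can be compared, and the smoothness question is left unresolved exactly at the point where it matters.
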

The map $h$ is a already a homeomorphism. We need to show that it is a
local diffeomorphism everywhere. Let us denote by $c_i^j$ for $j \in \{1,\,2\}$
and $i \in \{1,\,\dots,\,m_f\}$ the focus-focus critical values of $F_j$. Again we
let $(e_1,\,e_2)$ be the canonical basis of $\R^2$ and let
$\ell_i^j\subset\R^2$ be the vertical half\--line starting at $c_i^j$
and pointing in the direction of $\epsilon_je_2$.

Since the tuple of heights of the focus\--focus points given by invariant (iv) are the same for both
systems, $g_1^{-1}(c_j^1)=g_2^{-1}(c_j^2)$ and
$g_1^{-1}(\ell_i^1)=g_2^{-1}(\ell_i^2)$. Hence $h$ is smooth away from
the union of all $\ell_i^2$, $i \in \{1,\,\dots,\,m_f\}$.

Let us now fix some $i\in\{1,\dots, m_f\}$ and let $\widetilde{U}_z$
be a small ball around a point $z\in\ell_i^2\setminus\{c_i^2\}$.  For
simplifying notations, we shall drop the various subscripts $i$.
Recall that $\widetilde{U}_z$ inherits from $F_2(M_2)$ an
integral-affine structure. Let $\phy_z:U_z\to\widetilde{U}_z$ be an
oriented affine chart, with $U_z$ a neighborhood of the origin in
$\R^2$, sending the vertical axis to $\ell^2$.
In order to show that $h$ is smooth on $\widetilde{U}_z$, we consider
the two halves of the ball $\widetilde{U}_z$~:
$$\widetilde{U}_z^+=\widetilde{U}_z\cap\{x\geq x_c\}\,\,\,\textup{ and}\,\,\,
\widetilde{U}_z^-=\widetilde{U}_z\cap\{x\leq x_c\}, 
$$
where $x_c$ is
the abscissa of $c^2$. Of course, the restrictions of $\phy_z$ to each
half $U_z\cap\{x\geq 0\}$ and $U_z\cap\{x\leq 0\}$ are admissible
affine charts for $\widetilde{U}_z^+$ and $\widetilde{U}_z^-$,
respectively.  Let us call these restrictions $\varphi^\pm_z$. Let
$y=h(z)$, $\widetilde{V}_y=h(\widetilde{U}_z)$ and
$\widetilde{V}_y^\pm=h(\widetilde{U}_z^\pm)$. Using the natural
integral\--affine structure on $F_1(M_1)$, we can similarly introduce an
affine chart $\psi_y$ for $\widetilde{V}_y$ and the corresponding
restrictions $\psi_y^\pm$.
We are now going to use the following facts~:
\begin{enumerate}
\item On each half $\widetilde{U}_z^+$ and $\widetilde{U}_z^-$, $h$ is
  an integral\--affine isomorphism~:
  $\widetilde{U}_z^\pm\to\widetilde{V}_y^\pm$.
\item The differential $\op{d}h$ is continuous on $\widetilde{U}$.
\end{enumerate}
The first fact implies, by definition, that the map
\[
\nu^\pm:=(\psi^\pm_y)^{-1} \circ h \circ \varphi^\pm_z,
\]
wherever defined, is of the form $ A^\pm(\cdot)+b^\pm, $ for some
matrix $A^\pm \in \textup{GL}(2,\, \mathbb{Z})$ and some constant
$b^\pm \in \mathbb{R}^2$. Evaluating the differentials at the origin,
we immediately deduce from the second fact that $A^+=A^-$. So,
$\nu^\pm$ should be just a translation. But $h$ itself being
continuous on the line segment $\ell^2$, we must have $b^+=b^-$. It
follows that, on $\widetilde{U}_z$, $h$ is equal to $\psi_y\circ
L\circ\phy_z^{-1}$, where $L$ is the affine transformation
$A^+(\cdot)+b^+=A^-(\cdot)+b^-$. So $h$ is indeed smooth on
$\widetilde{U}_z$.

We have left to show that $h$ is smooth at a focus\--focus critical
value $c^2$. The fact that we are assuming that invariant (ii) is the same
for both systems means that the corresponding 
symplectic invariants power series $(S)^{\infty}$ are the same
for both systems implies, by the semi\--global result of V\~u Ng\d oc \cite[Th.~2.1]{vungoc0},
that there exist a neighborhood $V(c^1)$ of $c^1$, a
neighborhood $V(c^2)$ of $c^2$, a semi\--global symplectomorphism $\phy:
F_1^{-1}(V(c^1)))\to F_2^{-1}(V(c^2))$ and a local diffeomorphism
$g:V(c^2)\to V(c^1)$ such that $F_1 = g\circ F_2\circ \phy$.
Now, from Lemma~\ref{rema:priviledged} we know that there exists a
privileged toric momentum map, c.f. Definition \ref{def:priviledged}, for each system above
the domain $V(c^j)\setminus\ell^j$, $j \in \{1,\,2\}$, c.f. Definition \ref{def:priviledged}. 
We denote by $\nu_1$ this momentum
map for the system induced by $F_1$, and $\nu_2$ for the system
induced by $F_2$.  Since $\nu_1$ and $\nu_2$ are semi-global
symplectic invariants, one has $\nu_1=\nu_2\circ\phy$.

By equation (\ref{for2:eq}) 
the focus\--focus critical values $c^1$ of
the semitoric systems
$F_1$ and and $c^2$ of $F_2$ have the same twisting\--index
invariant $k$ with
respect to the common polygon $\Delta$. In view of the
characterization of expresion (\ref{equ:twist}), we get
\begin{equation}
  \mu_1 = T^{k} \nu_1\qquad \text{ and } \qquad \mu_2=T^{k} \nu_2,
  \label{equ:mu}
\end{equation}
and therefore
\[
\mu_1=\mu_2\circ\phy.
\]
Thus we can write $g_1^{-1}F_1 = g_2^{-1}F_2\circ \phy$, or
\[
F_1=h\circ g^{-1} \circ F_1.
\]
Using that $F_1$ is a submersion on a neighborhood of any regular
torus, and the fact that $h \circ g^{-1}$ is smooth at the
corresponding regular values of $F_1$, we get that
\begin{equation}
  h \circ g^{-1} = \textup{Id} \qquad \text{ on } V(c^1).
  \label{equ:h}
\end{equation}
By continuity, this also holds at $c^1$. Hence $h= g$ is smooth at
$c^2$, which proves the claim.
\\~\\
{\em Step 2}: \emph{Local symplectomorphisms.} From step 1 we can
assume that
\begin{equation}
  F_1= g_1\circ \mu_1 \quad F_2 = g_1\circ\mu_2
  \label{equ:F}
\end{equation}
and hence $F_1(M_1)=F_2(M_2)$. In this second step we prove that this
common image can be covered by open sets $\Omega_\alpha$, above each
of which $F_1$ and $F_2$ are symplectically interwined.
  
  \begin{claim}
  \label{claim3}
  There exists a locally finite open cover
  $(\Omega_{\alpha})_{\alpha\in A}$ of $F_1(M_1)=F_2(M_2)$ such that
  \begin{enumerate}
  \item all $\Omega_{\alpha}$, $\alpha\in A$ are simply connected, and
    all intersections are simply connected;
  \item for each $\alpha\in A$, $\Omega_{\alpha}$ contains at most one
    critical value of rank $0$ of $F_i$, for any $i \in \{1,\,2\}$;
  \item for each $\alpha\in A$, there exists a symplectomorphism
    $\phy_\alpha: F_1^{-1}(\Omega_{\alpha})\to
    {F}_2^{-1}(\Omega_{\alpha})$ such that
    \[
    F_1 = {F}_2\circ \phy_\alpha \quad \text{ on }
    F_1^{-1}(\Omega_{\alpha}).
    \]
  \end{enumerate}
\end{claim}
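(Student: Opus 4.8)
The plan is to build the cover in three flavors --- small neighborhoods of the focus\--focus values, small neighborhoods of the elliptic\--elliptic values, and pieces that meet only regular and transversally elliptic values --- and to produce $\phi_\alpha$ on each flavor from an appropriate (semi\--)local normal form, arranging that the momentum maps are intertwined \emph{exactly} by exploiting the invariants that Step~1 has already forced to agree. For the cover itself, note first that the rank $0$ critical values of $F_1$ and of $F_2$ form the same finite subset of $B:=F_1(M_1)=F_2(M_2)$: the focus\--focus values coincide by the identification $g_1^{-1}(c^1_i)=g_2^{-1}(c^2_i)$ of Step~1, and the elliptic\--elliptic values are the $g_1$\--images of the vertices of $\Delta$ in both cases. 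Around each of these finitely many points I would pick a simply connected ball small enough to contain no other rank $0$ value, and cover the rest of $B$ by simply connected open sets meeting only regular and transversally elliptic (rank $1$, boundary) values; a standard good\--cover argument --- geodesic convexity for an auxiliary metric on the manifold\--with\--corners $B$ away from the rank $0$ points, together with the chosen balls --- makes the cover locally finite with all finite intersections simply connected, giving (1) and (2).

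For a piece $\Omega_\alpha$ meeting only regular and transversally elliptic values, the key observation is that the integral\--affine structures induced on $B_r$ by $F_1$ and by $F_2$ coincide: by part (2) of Theorem~\ref{keytheorem} each of them equals the push\--forward by $g_1$ of the standard structure off the vertical cut set $\ell^{\vec\epsilon}$, hence they agree on the dense connected open set $B_r\setminus\ell^{\vec\epsilon}$, hence on all of $B_r$ by smoothness; the artificial cuts are irrelevant here because we work with $F_i$ rather than with $\mu_i$. On the simply connected $\Omega_\alpha$ this common structure is monodromy\--free, so it develops through a single chart $\delta\colon\Omega_\alpha\to\R^2$, and $\delta\circ F_i$ is a momentum map inducing the standard affine structure. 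Over the regular part, Duistermaat's action\--angle theorem, together with the vanishing of the Chern and Lagrangian classes over the contractible $\delta(\Omega_\alpha)$, identifies $F_i^{-1}(\Omega_\alpha)$ with the same standard model $\delta(\Omega_\alpha)\times\T^2$; near the transversally elliptic boundary I would invoke the semi\--local elliptic normal form of Eliasson and Dufour\--Molino, in the sharpened form recorded in the appendix, Section~\ref{sec:miranda}, to the effect that the semi\--local symplectic model of the fibration over a simply connected base with transversally elliptic boundary is rigid once its integral\--affine structure is fixed. Matching these identifications along the regular overlap yields a symplectomorphism $\phi_\alpha$ with $\delta\circ F_1=\delta\circ F_2\circ\phi_\alpha$, that is, $F_1=F_2\circ\phi_\alpha$.

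For a piece $\Omega_\alpha$ around an elliptic\--elliptic value, Eliasson's theorem gives symplectic charts in which $F_j=g_j\circ q$ with $q=(x^2+\xi^2,\,y^2+\eta^2)$; near such a corner both commuting flows are $2\pi$\--periodic, so this is a genuine local toric model whose momentum map, after composition with $g_j$, is the generalized toric momentum map $\mu_j$, smooth at the corresponding vertex of $\Delta$. Since $\mu_1$ and $\mu_2$ have the same image near that vertex, the two local toric models are isomorphic and provide $\phi_\alpha$ with $\mu_1=\mu_2\circ\phi_\alpha$, equivalently $F_1=F_2\circ\phi_\alpha$ by~(\ref{equ:F}). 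For a piece $\Omega_\alpha$ around a focus\--focus value $c_i$, this is essentially the computation already carried out at the end of Step~1: V\~u Ng\d oc's semi\--global classification~\cite[Th.~2.1]{vungoc0} combined with the equality of the Taylor series invariants yields a symplectomorphism $\phi_\alpha$ and a local diffeomorphism $g$ with $F_1=g\circ F_2\circ\phi_\alpha$; the privileged momentum maps then satisfy $\nu_1=\nu_2\circ\phi_\alpha$, and the equality of the twisting indices $k_i$ promotes this, via~(\ref{equ:twist}), to $\mu_1=\mu_2\circ\phi_\alpha$ and hence to $F_1=F_2\circ\phi_\alpha$.

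The main obstacle I anticipate is the second flavor of pieces: securing a clean, rigid semi\--local normal form for the fibration over a simply connected base that contains transversally elliptic boundary values --- this is precisely why the modification of Miranda\--Zung in the appendix is needed --- and, simultaneously, the bookkeeping ensuring that one locally finite cover can fulfil (1), (2) and (3) at once. In particular the simply connected pieces must be chosen so that their pairwise and triple overlaps also avoid the rank $0$ values, so that the developing map $\delta$ is defined on each overlap and the local identifications of the previous paragraphs can be compared there.
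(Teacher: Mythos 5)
Your strategy parallels the paper's, but you've reorganized it and in one place invoked the wrong tool. The paper constructs the cover by choosing balls $\tilde\Omega_\alpha$ in the polygon $\Delta$ (centered at rank-$0$ values, with rank-$1$ values along a diameter) and pushing forward by $g_1$; your geodesic-convexity good-cover is an equivalent but less streamlined device. The paper then splits point (3) into four cases --- no critical value, rank-$1$ elliptic only, elliptic--elliptic, focus--focus --- while you merge the first two into a single developing-map argument. Your treatments of the elliptic--elliptic and focus--focus pieces are essentially identical to the paper's (Eliasson local normal form with the Delzant-corner rigidity, and V\~u Ng\d oc's semi-global theorem combined with the twisting-index equality to upgrade $\nu_1=\nu_2\circ\phi_\alpha$ to $F_1=F_2\circ\phi_\alpha$).

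The substantive issue is your appeal to ``the sharpened form recorded in the appendix, Section~\ref{sec:miranda}.'' The appendix proves Claim~\ref{claim:flow}, a statement about the exponential map on germs of fibration-preserving Hamiltonian vector fields and the linear part of a symplectomorphism preserving a homogeneous momentum map. It is used exclusively in Step~3 (Claim~\ref{claim4}), to fill in the cocycle-type correction when gluing the $\phi_\alpha$'s; it is \emph{not} a semi-local rigidity theorem for fibrations with transversally elliptic boundary, and plays no role in the paper's proof of Claim~\ref{claim3}. The normal form you actually need near the boundary is the Eliasson / Dufour--Molino / Miranda--Zung \cite[Th.~2.1]{miranda-zung} statement (the ``action--angle with elliptic singularities'' theorem), which is indeed what the paper cites. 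Moreover, that normal form only hands you $F_i=h_i(q_1,\xi_2)\circ\tilde\phi_{i,\alpha}$ with \emph{a priori distinct} diffeomorphisms $h_i$; the assertion that ``the semi-local symplectic model is rigid once its integral-affine structure is fixed'' is exactly the non-trivial point. The paper earns it concretely: it pins down a base point $c_\alpha$, observes that $g_1^{-1}h_i$ is affine with linear part in $\op{GL}(2,\Z)$ fixing $e_2$ (hence a power $T^k$), and constructs an explicit correcting symplectomorphism --- $S_i$ block-diagonal in case~(a), and the map~(\ref{equ:S}) built from $\textup{e}^{ikx_2}$ in case~(b) --- whose effect on $(q_1,\xi_2)$ is precisely $T^k$. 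Without this explicit correction (or an argument replacing it, e.g.\ a genuine reference for the claimed rigidity over a simply connected base with elliptic boundary), the step from ``same affine structure'' to ``$F_1=F_2\circ\phi_\alpha$ on the nose'' is a gap, since the normal-form symplectomorphisms a priori only intertwine the two $F_i$ up to a diffeomorphism of the base.

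Two smaller points: first, your claim that the two integral-affine structures agree on $B_r$ is correct, but it follows already from equation~(\ref{equ:F}) of Step~1 (both structures are $g_1$-pullbacks of the same toric structure); invoking ``agreement off the cut set plus smoothness'' is unnecessary once $F_1=g_1\circ\mu_1$ and $F_2=g_1\circ\mu_2$. Second, the locally-finite-cover bookkeeping you worry about at the end is handled automatically by the paper's choice: balls around distinct rank-$0$ values can be taken disjoint, and convex balls have simply connected pairwise intersections, which is all that Claim~\ref{claim4} ultimately requires.
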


We prove this claim next.  Recall that the toric momentum maps $\mu_1$
and $\mu_2$ have by hypothesis the same image, which is the polygon
$\Delta$. We can define an open cover $\tilde{\Omega}_\alpha$ of
$\Delta$ with open balls, satisfying points $(1)$ and $(2)$. When
the ball $\tilde{\Omega}_\alpha$ contains critical value of rank 0, we
may assume that this critical value is located at its
center. Similarly, when a ball contains critical values of rank 1, we
may assume that the set of rank 1 critical values in this ball is a
diameter.  Then we just define
\[
\Omega_{\alpha} = g_1(\tilde{\Omega}_\alpha).
\]
Notice that in doing so we ensure that the number and type of critical
values of $F_i$ in $\Omega_{\alpha}$ are the same for $i=1$ and $i=2$.
For proving point $(3)$ we distinguish four cases~:
\begin{itemize}
\item[(a)] $\Omega_{i,\alpha}$ contains no critical point of $F_i$;
\item [(b)] $\Omega_{i,\alpha}$ contains critical points of rank 1, but
  not of rank 0;
\item [(c)] $\Omega_{i,\alpha}$ contains a rank 0 critical point, of
  elliptic-elliptic type;
\item [(d)] $\Omega_{i,\alpha}$ contains a rank 0 critical point, of
  focus-focus type.
\end{itemize}
The reasoning for all cases follows the same lines, but we keep these
cases separated for the sake of clarity.

\paragraph{Case (a).} Let us fix a point $c_\alpha\in\Omega_\alpha$. By
Liouville\--Mineur\--Arnold theorem \cite{duistermaat}, applied for each momentum map $F_i$
over the simply connected open set $\Omega_\alpha$, there exists a
symplectomorphism $\tilde{\phy}_{i,\alpha}:F_i^{-1}(\Omega_\alpha)\to
\op{T}^*\T^{2}$ and a local diffeomorphism $h_i:(\R^2,\,0)\to(\R^2,\,c_\alpha)$
such that
\[
F_i=h_i(\xi_1,\xi_2)\circ\tilde{\phy}_{i,\alpha}.
\]
Here we use the notation $(x_1,\,x_2,\,\xi_1,\,\xi_2)$ for canonical
coordinates in $\op{T}^*\T^2$, where the zero section is given by
$\{\xi_1=\xi_2=0\}$.

In fact because of equation~\eqref{equ:F},
$\mu_i=g_1^{-1}h_i(\xi_1,\,\xi_2)\circ\tilde{\phy}_{i,\alpha}$. Since
both $\mu_i$ and $(\xi_1,\,\xi_2)$ are toric momentum map, this implies
that $g_1^{-1}h_i$ is an affine map with a linear part
$B_i\in\op{GL}(2,\,\Z)$. Now we can define a linear
symplectomorphism in a block\--diagonal way 
$$S_i=
\begin{pmatrix}
  \trsp B_i&0\\0& B_i^{-1}.
\end{pmatrix}
$$
Obviously $(\xi_1,\xi_2)\circ S_i=B_i^{-1}\circ(\xi_1,\xi_2)$. From
now on we replace $\tilde{\phy}_{i,\alpha}$ by $S_i\circ
\tilde{\phy}_{i,\alpha}$, which reduces us to the case
$B_i=\textup{Id}$.

Now, let
$\phy_\alpha:=\tilde{\phy}_{2,\alpha}^{-1}\circ\tilde{\phy}_{1,\alpha}$. We
have the relation
\[
F_1 = (h_1 h_2^{-1})\circ
F_2\circ\tilde{\phy}_{2,\alpha}^{-1}\circ\tilde{\phy}_{1,\alpha} = g_1
(g_1^{-1}h_1)(g_1^{-1}h_2)^{-1} g_1^{-1}\circ\phy_\alpha.
\]
The affine diffeomorphism $(g_1^{-1}h_1)(g_1^{-1}h_2)^{-1}$ is tangent
to the identity and fixes the point $c_\alpha$; hence it is the
identity, and we obtain, as required~:
\[
F_1 = F_2\circ\phy_\alpha, \quad \text{ on } F_1^{-1}(\Omega_\alpha).
\]

\paragraph{Case (b).} Above $\Omega_\alpha$, the momentum map has
singularities, so we cannot apply the action-angle theorem. However,
there is still a $\T^2$-action, and it is well known that an
``action-angle with elliptic singularities'' theorem holds
(see~\cite{dufour-molino} or ~\cite{miranda-zung}).  Precisely, we fix
a point $c_\alpha\in\Omega_\alpha$ that is a critical value of $F_1$
and $F_2$, and then for each $i \in \{1,\,2\}$, there exists a symplectomorphism
$\tilde{\phy}_{i,\alpha}: F_i^{-1}(\Omega_{\alpha})\to \op{T}^*\R\times
\op{T}^*\T^1$ and a local, orientation preserving diffeomorphism
$h_i:(\R^2,\,0)\to(\R^2,\,c_\alpha)$ such that
\[
F_i=h_i(q_1,\xi_2)\circ\tilde{\phy}_{i,\alpha}.
\]
Here $\op{T}^*\R$ has canonical coordinates $(x_1,\,\xi_1)$,
$q_1=(x_1^2+\xi_1^2)/2$, and $\op{T}^*\T^1$ has canonical coordinates
$(x_2,\, \xi_2)$. As before, one has
$$
\mu_i=g_1^{-1}h_i(q_1,\,\xi_2)\circ\tilde{\phy}_{i,\alpha}, 
$$
and
$g_1^{-1}h_i$ is an affine map with a linear part in
$\textup{SL}(2,\Z)$.

Since $h_i$ must preserve the set of critical values, it must send the
vertical axis ``$q_1=0$''$\subset\R^2$ to the set of critical values
in $\Omega_\alpha$. Hence $g_1^{-1}h_i$ sends the vertical axis to the
corresponding diameter in $\tilde{\Omega}_\alpha$. Moreover, since by
hypothesis the images of $\mu_1$ and $\mu_2$ are the same, the set
``$q_1>0$'' corresponds via $g_1^{-1}h_1$ and $g_1^{-1}h_2$ to the
same half of the ball $\tilde{\Omega}_\alpha$. In other words, the
vector $e_2=(0,1)$ is an eigenvector for the linear part $B$ of
$(g_1^{-1}h_1)^{-1}g_1^{-1}h_2$, with eigenvalue 1.

Therefore, $B$ is of the form $
\begin{pmatrix}
  1 & 0\\k & 1
\end{pmatrix}
$, for some integer $k\in\Z$. Now, consider the map
$S(x_1,\, x_2,\,\xi_1,\,\xi_2)=(x_1',\,x_2',\,\xi_1',\,\xi_2')$ given by
\begin{equation}
  \label{equ:S}
  \left\{
    \begin{aligned}
      (x_1'+i\xi_1')&=\textup{e}^{ikx_2}(x_1+i\xi_1)\\
      x_2'&=x_2\\
      \xi_2'&= \xi_2+k q_1.
    \end{aligned}\right.
\end{equation}
In complex coordinates, 
$$\DD{}\xi_1\wedge \DD{}
x_1=\frac{1}{2i}\DD{} z_1\wedge \DD{}\bar{z}_1,$$
so it is easy to
check that $S$ is symplectic. Moreover,
\[
(q_1,\xi_2)\circ S =
\begin{pmatrix}
  1 & 0\\k & 1
\end{pmatrix} (q_1,\xi_2) = B(q_1,\xi_2).
\]
We can write
\[
F_1 = h_1 B^{-1}(q_1,\xi_2)\circ S \circ \tilde{\phy}_{1,\alpha},
\]
and hence, letting $\phy_\alpha:=\tilde{\phy}_{2,\alpha}^{-1}\circ
S\circ \tilde{\phy}_{\alpha}$,
\[
F_1 = (h_1 B^{-1} h_2^{-1})\circ F_2\circ\phy_\alpha.
\]

Consider the affine map $(g_1^{-1}h_1)^{-1}g_1^{-1}h_2 B^{-1}$. Its
linear part is the identity, and it fixed the origin; thus it is the
identity. This implies that $h_1 B^{-1} h_2^{-1}=\textup{Id}$.

\paragraph{Case (c).} Using Eliasson's local normal form for
elliptic-elliptic singularities, we have the existence of a
symplectomorphism $\tilde{\phy}_\alpha$ and a local diffeomorphism $h$
such that
\[
F_1=h\circ F_2\circ\tilde{\phy}_\alpha.
\]
Again, because of equation~\eqref{equ:F}, $\mu_2=g_1^{-1}hg_1\circ
\mu_1\circ\tilde{\phy}_\alpha$, and
$g_1^{-1}hg_1\in\textup{GL}(2,\Z)$. But since the image of $\mu_1$ and
the image of $\mu_2$ are the same, then $g_1^{-1}hg_1$ must send the
corner of the polygon to itself. Since it is a Delzant corner,
$g_1^{-1}hg_1$ must be the identity.

\paragraph{Case (d).} From~(\ref{equ:F}), the momentum maps $F_1$ and
$F_2$ have the same focus\--focus critical values $c_1,\dots,c_{m_f}$.  We
wish here to interwine both systems above a small neighborhood of each
$c_i$. In order to ease notations, let us drop the subscript $i$, as
the construction can be repeated for each focus-focus point.

The behaviour of the system in a neighborhood of $c$ is given by V\~u
Ng\d oc's theorem, which we already used for the proof of
Claim~\ref{claim2}.  Precisely \cite[Th.~2.1]{vungoc} gives the
existence of a neighborhood $V(c)$ of $c$ together with an equivariant
symplectomorphism
  \begin{eqnarray} \label{symplecto1} \psi \colon F_1^{-1}(V(c)) \to
    F_2^{-1}(V(c))
  \end{eqnarray}
  and a diffeomorphism $g$ such that
  \begin{eqnarray} \label{main} F_1=g \circ F_2 \circ \psi.
  \end{eqnarray}
Now, we may argue exactly as
in~(\ref{equ:mu})~--~(\ref{equ:h}), keeping in mind that we are now in
the case where $h=\textup{Id}$. Hence $g$ also must be the identity map.

This concludes the proof of Claim~\ref{claim3} and hence Step 2.
\\~\\
{\em Step 3:} \emph{Local to Global}.  In this last step is to glue
together the local symplectomorphisms of Step 2, thus constructing a
global symplectomorphism $\phi:M_1\to M_2$ such that
$F_1=F_2\circ\phi$.
For technical reasons, we introduce a slightly smaller open cover
that $(\Omega_\alpha)_{\alpha\in A}$.

\begin{claim}
\label{claim4}
There exists an open cover $(\Omega'_\alpha)_{\alpha\in A}$ of $F_1(M_1)=F_2(M_2)$ such that 
\begin{itemize}
\item[(i)]
$\Omega'_\alpha\Subset\Omega_\alpha$,
\item[(ii)]
$(\Omega'_\alpha)_{\alpha\in A}$
verifies the properties
of Claim~\ref{claim3}, i.e. if we replace $\Omega_{\alpha}$ by $\Omega'_{\alpha}$
therein, Claim~\ref{claim3} holds.
\item[(iii)]
If $\alpha,\, \beta \in A$ are such that $\Omega'_{\alpha}\cap\Omega'_{\beta}\neq\emptyset$, there
  exists a smooth symplectomorphism: 
  $$\phy_{(\alpha,\beta)}:
  F_1^{-1}(\Omega'_{\alpha}\cup\Omega'_{\beta})\to
  F_2^{-1}(\Omega'_{\alpha}\cup\Omega'_{\beta})
  $$
   such that
  \begin{enumerate}
  \item $(\phy_{(\alpha,\beta)})|_{F^{-1}(\Omega'_\alpha)}=\phy_\alpha$;
  \item $F_1 = \phy_{(\alpha,\beta)}^* F_2 \quad \text{ on }
    F_1^{-1}(\Omega'_{\alpha}\cup\Omega'_{\beta}).$
\end{enumerate}
\end{itemize}
\end{claim}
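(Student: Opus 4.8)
\emph{Plan.} I would obtain $(\Omega'_\alpha)_{\alpha\in A}$ by shrinking $(\Omega_\alpha)_{\alpha\in A}$ and, over each nonempty double overlap, define $\phy_{(\alpha,\beta)}$ to be $\phy_\alpha$ on $F_1^{-1}(\Omega'_\alpha)$ and $\phy_\beta$ corrected by a cut\--off of the fibre\--preserving symplectomorphism $\phy_\beta^{-1}\circ\phy_\alpha$ on $F_1^{-1}(\Omega'_\beta)$. Recall from the proof of Claim~\ref{claim3} that $\Omega_\alpha=g_1(\tilde\Omega_\alpha)$ for balls $\tilde\Omega_\alpha\subset\Delta$. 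A standard two\--step shrinking of locally finite covers produces open covers $(\tilde\Omega'_\alpha)$ and $(\tilde\Omega''_\alpha)$ of $\Delta$ by concentric balls with $\overline{\tilde\Omega'_\alpha}\subset\tilde\Omega''_\alpha$ and $\overline{\tilde\Omega''_\alpha}\subset\tilde\Omega_\alpha$; choosing the radii so that each ball centred at a rank\--$0$ critical value keeps that value, each ``critical diameter'' remains a diameter, and (after a possible refinement) each rank\--$0$ critical value has a neighbourhood meeting only one member of the cover, I set $\Omega'_\alpha:=g_1(\tilde\Omega'_\alpha)$ and $\Omega''_\alpha:=g_1(\tilde\Omega''_\alpha)$. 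Then (i) holds, and (ii) is clear: balls and their pairwise intersections are simply connected, shrinking adds no critical values, and over $\Omega'_\alpha$ one uses the restriction of $\phy_\alpha$. Furthermore, for $\alpha\neq\beta$ the compact set $\overline{\Omega'_\alpha\cap\Omega'_\beta}$ lies in $\Omega''_\alpha\cap\Omega''_\beta$ and avoids all rank\--$0$ critical values, so I may fix once and for all a simply connected open set $O=O_{\alpha\beta}$ with $\overline{\Omega'_\alpha\cap\Omega'_\beta}\subset O\subset\Omega''_\alpha\cap\Omega''_\beta$, $\overline O\subset\Omega_\alpha\cap\Omega_\beta$, and $\overline O$ free of rank\--$0$ critical values.

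\emph{The transition symplectomorphism.} Fix $\alpha\neq\beta$ with $\Omega'_\alpha\cap\Omega'_\beta\neq\emptyset$. Since $F_2\circ\phy_\alpha=F_1=F_2\circ\phy_\beta$, the map $g:=\phy_\beta^{-1}\circ\phy_\alpha$ is a symplectomorphism of $F_1^{-1}(\Omega_\alpha\cap\Omega_\beta)$ with $F_1\circ g=F_1$, that is, a fibre\--preserving symplectomorphism. Over $O$, which by construction contains only regular and rank\--$1$ critical values of $F_1$, the action\--angle theorem with elliptic singularities \cite{dufour-molino,miranda-zung} supplies global ``action'' coordinates; in these, a fibre\--preserving symplectomorphism preserving $F_1$ is a fibrewise translation by a closed $1$\--form on the base, which is exact because $O$ is simply connected. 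Hence over $F_1^{-1}(O)$ the map $g$ is the time\--$1$ flow of $\mathcal{X}_{a\circ F_1}$ for some $a\in\op{C}^{\infty}(O,\,\R)$; note that $\mathcal{X}_{a\circ F_1}$ is everywhere tangent to the fibres of $F_1$, since the components of $F_1$ Poisson\--commute.

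\emph{Cut\--off and gluing.} Pick $\rho\in\op{C}^{\infty}(\R^2,\,\R)$ with $\rho\equiv1$ near $\overline{\Omega'_\alpha\cap\Omega'_\beta}$ and $\mathrm{supp}\,\rho$ a compact subset of $O\cap\Omega''_\beta$ (possible because $\overline{\Omega'_\alpha\cap\Omega'_\beta}$ is a compact subset of the open set $O\cap\Omega''_\beta$). Then $(\rho\,a)\circ F_1$, extended by $0$, is a smooth function on $F_1^{-1}(\Omega''_\beta)$, and I let $\tilde g$ be the time\--$1$ flow of its Hamiltonian vector field: a fibre\--preserving symplectomorphism of $F_1^{-1}(\Omega''_\beta)$ that agrees with $g$ over $F_1^{-1}(\Omega'_\alpha\cap\Omega'_\beta)$, since each trajectory of the flow stays in a single fibre, over which $\rho\equiv1$. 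Set
\[
\phy_{(\alpha,\beta)}:=
\begin{cases}
\phy_\alpha & \text{on } F_1^{-1}(\Omega'_\alpha),\\
\phy_\beta\circ\tilde g & \text{on } F_1^{-1}(\Omega'_\beta).
\end{cases}
\]
On $F_1^{-1}(\Omega'_\alpha\cap\Omega'_\beta)$ both prescriptions yield $\phy_\beta\circ g=\phy_\alpha$, so they agree on an open set and, smoothness being local, patch to a smooth symplectomorphism $F_1^{-1}(\Omega'_\alpha\cup\Omega'_\beta)\to F_2^{-1}(\Omega'_\alpha\cup\Omega'_\beta)$. It restricts to $\phy_\alpha$ over $\Omega'_\alpha$, which is~(iii)(1), and it intertwines $F_1$ with $F_2$, which is~(iii)(2), because $\phy_\alpha$ and $\phy_\beta$ do and $\tilde g$ preserves $F_1$.

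\emph{Main obstacle.} Two points deserve attention. The first is the point\--set book\--keeping behind the shrinking: producing a cover with all the compact containments used above is routine but delicate, resting on local finiteness and on the concentric\--ball structure of the $\tilde\Omega_\alpha$ --- this is the ``technical reasons'' mentioned before the claim. The second, and the genuine mathematical content, is the identification of $g=\phy_\beta^{-1}\circ\phy_\alpha$ with the flow of $\mathcal{X}_{a\circ F_1}$ for $a$ smooth near $\overline{\Omega'_\alpha\cap\Omega'_\beta}$; here the argument was made easy by arranging that double overlaps contain no rank\--$0$ critical value, so that only the regular and elliptic normal forms are needed. Had a focus\--focus value been forced into a double overlap, the corresponding cut\--offable generating Hamiltonian would instead have to be extracted from V\~u Ng\d oc's semi\--global classification \cite{vungoc0} together with the smoothness of the privileged momentum map, Lemma~\ref{rema:priviledged}.
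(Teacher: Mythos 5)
Your proposal follows essentially the same approach as the paper: shrink the cover, write the transition map $g=\phy_\beta^{-1}\circ\phy_\alpha$ as the time\--$1$ flow of a Hamiltonian that is a pullback by $F_1$, cut it off, and glue. The one place where the paper works harder than your sketch is the identification of $g$ as such a flow when the overlap meets a rank\--$1$ elliptic stratum: you assert that in action coordinates ``a fibre\--preserving symplectomorphism preserving $F_1$ is a fibrewise translation by a closed $1$\--form,'' whereas the paper separates the regular and rank\--$1$ cases and, in the latter, applies the homogeneous version of Miranda\--Zung's linearization (Claim~\ref{claim:flow}, proved in the appendix) to connect $g$ to its linear part before observing that the linear part is itself a time\--$1$ Hamiltonian flow; your phrasing compresses precisely this step, which is the nontrivial content in the elliptic case. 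This is a matter of level of detail rather than a gap in the argument, and the references you cite are the right ones.
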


The proof of this claim uses the Hamiltonian structure of the group
of symplectomorphisms preserving homogeneous momentum maps, which we
state below. It is due to Miranda-Zung~\cite{miranda-zung}.

First we introduce some notation. Let $h_1,\dots h_n$ be $n$
Poisson-commuting functions$: \R^{2n}\to\R$.  Suppose that $\psi
\colon (\R^{2n},\,0) \to (\R^{2n},\,0)$ is a local symplectomorphism
of $\R^{2n}$ which preserves the smooth map $h=(h_1,\ldots,h_n)$.

Let $\op{Sympl}(\R^{2n})$ be the group of symplectomorphisms of $\R^{2n}$.
Consider the set
$$
\Gamma:=\{\phi \in \op{Sympl}(\R^{2n}) \, | \, \phi(0)=0,\,\,\, h
\circ \phi =h\},
$$
and let $\Gamma_0$ stand for the path\--connected component of the
identity of $\Gamma$. Let $\got{g}$ be the Lie algebra of germs
of Hamiltonian vector fields tangent to the fibration $\mathcal{F}$
given by $h$.

Let $\op{exp} \colon \got{g} \to \Gamma_0$ be the exponential
mapping determined by the time\--1 flow of a vector field $X \in
\got{g}$. More precisely, the time\--s flow $\phi^s_X$ of $X$
preserves $h$ because $X$ is tangent to $\mathcal{F}$, and it
preserves the symplectic form because $X$ is a Hamiltonian vector
field.  The mapping $\phi^s_{X}$ fixes the origin because $X$ vanishes
there.  Hence $\phi^s_{X_G}$ is contained in $\Gamma_0 \subset
\Gamma$ since $\phi^0_X$ is the identity map.
\begin{claim}
  \label{claim:flow}
  Suppose that each $h_i$ is a homogeneous function, meaning that for
  each $i \in \{1,\ldots,n\}$ there exists an integer $k_i \ge 0$ such
  that $h_i(t\,x)=t^{k_i}\,h_i(x)$ for all $x \in \R^n$. Then:
  \begin{enumerate}
  \item The linear part $\psi^{(1)}$ of $\psi$ is a symplectomorphism
    which preserves $h$.
  \item There is a vector field contained in $\mathfrak{g}$ such that
    its time\--1 map is $\psi^{(1)} \circ \psi^{-1}$. Moreover, for
    each vector field $X$ fulfilling this condition there is a unique
    local smooth function $\Psi: (\R^{2n},\,0) \rightarrow \R$ vanishing
    at $0$
    such that $X=X_{\Psi}$.
  \end{enumerate}
\end{claim}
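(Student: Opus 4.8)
\textbf{Proof plan for Claim~\ref{claim:flow}.}
The plan is to reduce everything to a homogeneous analogue of the classical ``Poincaré--Birkhoff''/averaging argument, exploiting the homogeneity of $h$ and the resulting $\R$\--action by dilations. First I would prove part $(1)$. Write $\psi=\psi^{(1)}+R$, where $\psi^{(1)}=\op{d}_0\psi$ and $R$ vanishes to second order at $0$. Since $\psi$ is symplectic and fixes the origin, $\psi^{(1)}\in\op{Sp}(2n,\,\R)$ by taking the differential at $0$ of $\psi^*\omega=\omega$. For the statement that $\psi^{(1)}$ preserves $h$, I would use the homogeneity hypothesis: for each $i$, homogeneity gives $h_i(t\,x)=t^{k_i}h_i(x)$, so expanding $h_i\circ\psi=h_i$ and comparing terms that are homogeneous of degree $k_i$ in $x$ (using the Taylor expansion of $\psi$ around $0$ and the fact that $R$ contributes only terms of degree $>1$ in each slot) shows that the lowest-order contribution forces $h_i\circ\psi^{(1)}=h_i$. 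Equivalently, rescale: $\psi_t(x):=t^{-1}\psi(t\,x)$ is symplectic for every $t>0$, satisfies $h_i\circ\psi_t = t^{-k_i}(h_i\circ\psi)(t\,x)=t^{-k_i}h_i(t\,x)=h_i$, and $\psi_t\to\psi^{(1)}$ as $t\to 0$ in the $C^\infty$ topology on a fixed neighborhood of $0$; passing to the limit gives $h\circ\psi^{(1)}=h$ and $\psi^{(1)}$ symplectic. This rescaling trick is the conceptual heart of part $(1)$.

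For part $(2)$, set $\chi:=\psi^{(1)}\circ\psi^{-1}$. By part $(1)$, $\chi$ is a symplectomorphism fixing $0$, preserving $h$, and tangent to the identity at $0$ (its linear part is $\psi^{(1)}\circ(\psi^{(1)})^{-1}=\op{Id}$). The goal is to realize $\chi$ as the time\--$1$ flow of a vector field $X\in\got g$. I would do this by the standard ``flow path'' construction: connect $\chi$ to the identity inside $\Gamma_0$ by the isotopy $\chi_t:=\psi_t^{(1)}\circ\psi_t^{-1}$ — or, more robustly, by the homogeneity\--rescaled path $t\mapsto (x\mapsto t^{-1}\chi(t\,x))$, which for $t\in(0,1]$ stays in $\Gamma$ (same computation as above) and limits to $\op{Id}$ at $t=0$. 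This gives a smooth isotopy $\{\chi_s\}_{s\in[0,1]}$ with $\chi_0=\op{Id}$, $\chi_1=\chi$, each $\chi_s$ symplectic and preserving $h$. Differentiating, $\frac{d}{ds}\chi_s = Y_s\circ\chi_s$ for a time\--dependent vector field $Y_s$ which is symplectic (so locally Hamiltonian) and tangent to $\mathcal F$ (since each $\chi_s$ preserves $h$). To convert the time\--dependent generator into a single autonomous $X\in\got g$ whose time\--$1$ flow is $\chi$, I would invoke precisely the statement the authors attribute to Miranda--Zung about the Hamiltonian structure of $\Gamma_0$ — i.e., that $\op{exp}\colon\got g\to\Gamma_0$ is onto (the slight modification proved in the appendix, Section~\ref{sec:miranda}), which is legitimately available since it is stated earlier in the paper.

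Finally, for the ``moreover'' clause: given $X\in\got g$ with time\--$1$ flow $\psi^{(1)}\circ\psi^{-1}$, I must produce a unique local smooth $\Psi$ with $\Psi(0)=0$ and $X=X_\Psi$. Existence is just the Poincaré lemma: on the simply connected germ $(\R^{2n},\,0)$, the $1$\--form $\iota_X\omega$ is closed (since $X$ is symplectic, i.e. $\mathcal L_X\omega=0$, so $\op{d}(\iota_X\omega)=\iota_X\op{d}\omega+\mathcal L_X\omega=0$), hence exact: $\iota_X\omega=\op{d}\Psi$ for some smooth $\Psi$, and we normalize $\Psi(0)=0$. Uniqueness: if $X_{\Psi_1}=X_{\Psi_2}$ then $\op{d}(\Psi_1-\Psi_2)=0$, so $\Psi_1-\Psi_2$ is locally constant, hence $0$ by the normalization. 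The fact that $X$ is tangent to $\mathcal F$ means this $\Psi$ Poisson\--commutes with each $h_i$, so $X_\Psi\in\got g$ consistently. I expect the main obstacle to be part $(2)$: making rigorous the passage from the rescaled isotopy to an \emph{autonomous} generator in $\got g$ — one has to be careful that the homogeneity rescaling keeps the path inside $\Gamma$ for all $s\in(0,1]$ and that the limiting behavior as $s\to 0$ is smooth enough to apply the surjectivity of $\op{exp}$; this is exactly where the appendix's refinement of Miranda--Zung is needed, and where one must check the hypotheses of that refinement are met in the homogeneous setting at hand.
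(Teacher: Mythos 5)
Your proposal is correct and essentially the same as the paper's proof: the rescaling $\psi_t(x)=t^{-1}\psi(tx)$ is exactly the Alexander-trick deformation $S^\psi_t$ used in Section~\ref{sec:miranda}, and your path $t\mapsto t^{-1}\chi(tx)$ coincides (since $\psi^{(1)}$ is linear and commutes with dilations) with the paper's $R_t=\psi^{(1)}\circ S^{(\psi^{-1})}_t$, after which both you and the paper invoke Miranda--Zung's surjectivity of $\exp\colon\got g\to\Gamma_0$. Your explicit Poincar\'e-lemma argument for the unique normalized primitive $\Psi$ with $X=X_\Psi$ is a small, correct elaboration of a step the paper absorbs into the citation of Miranda--Zung's Theorem~3.2.
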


Although not explicitely written in~\cite{miranda-zung}, the proof of
this claim is a minor extension of the case treated therein,
where all $h_i$ are quadratic functions.  For completeness, we have
included a proof as an appendix.
~\\~\\
We turn now to the proof of Claim~\ref{claim4}.  Because of
Claim~\ref{claim3}, there cannot be a critical value of $F_1$ of rank
zero in the intersection $\Omega_{\alpha}\cap\Omega_{\beta}$. Hence we
have two cases to consider~:
\begin{itemize}
\item[(1)] the intersection contains no critical value;
\item[(2)] the intersection contains critical values of rank one.
\end{itemize}

  \paragraph{Case (1).}
  Let $\varphi_{\alpha\beta}=\varphi_\alpha\varphi^{-1}_\beta$. It is
  well defined as a symplectomorphism of
  $M_{\alpha\beta}:=F_2^{-1}(\Omega_\alpha\cap\Omega_\beta)$ into
  itself. Moreover, $F_2^*\varphi_{\alpha\beta}=F_2$. Since $F_2$ is
  regular on $M_{\alpha\beta}$ (and $\Omega_\alpha\cap\Omega_\beta$ is
  simply connected), one can invoke the Liouville\--Mineur\--Arnold
  theorem~\cite{duistermaat} and assume that
  $M_{\alpha\beta}=\T^n\times D$, with corresponding angle-action
  coordinates $(x,\, \xi)$, where $D$ is some simply connected open
  subset of $\R^n$, in such a way that $F_2$ depends only the the
  $\xi$ variables. 
  
  The symplectomorphism $\phy_{\alpha\beta}$
  preserves the linear momentum map $\xi=(\xi_1,\, \xi_2)$, so we may
  apply Claim~\ref{claim:flow} and obtain
   a smooth function $h_{\alpha\beta}$ on
  $\Omega_\alpha\cap\Omega_\beta$ such that $\varphi_{\alpha\beta}$ is
  the time\--1 Hamiltonian flow of $h_{\alpha\beta}\circ F_2$. Let
  $\chi$ be a smooth function on $\Omega_\alpha\cup\Omega_\beta$
  vanishing outside $\Omega_\alpha\cap\Omega_\beta$ and identically
  equal to 1 in $\Omega'_\alpha\cap\Omega'_\beta$.  Thus we may
  construct a smooth function $\tilde{h}_{\alpha\beta}=\chi
  h_{\alpha\beta}$ on $\Omega_\alpha\cup\Omega_\beta$ whose
  restriction to a slightly smaller open set $\Omega'_\alpha\cap\Omega'_\beta$ is precisely
  $h_{\alpha\beta}$, where $\Omega'_{\alpha} \Subset \Omega_{\alpha}$
  and $\Omega'_{\beta} \Subset \Omega_{\beta}$ are chosen precisely
  so that this condition is satisfied. 
  Let $\tilde{\varphi}_{\alpha\beta}$ be the time-1
  Hamiltonian flow of $\tilde{h}_{\alpha\beta}\circ F_2$. It is
  defined on $F_2^{-1}(\Omega_\alpha\cup\Omega_\beta)$, and equal to
  $\phy_{\alpha\beta}$ on $F_2^{-1}(\Omega'_\alpha\cap\Omega'_\beta)$.

  Now consider the map $\psi$ defined on
  $F_1^{-1}(\Omega'_\alpha\cup\Omega'_\beta)$ by
$$
\psi(m) =
\begin{cases}
  \varphi_{\alpha} (m) & \textrm{if } m\in F_1^{-1}(\Omega'_\alpha)\\
  \tilde{\varphi}_{\alpha\beta}\circ\varphi_{\beta}(m) & \textrm{if }
  m \in F_1^{-1}(\Omega'_\beta).
\end{cases}
$$
It is well\--defined because on
$F_1^{-1}(\Omega'_\alpha\cap\Omega'_\beta)$ one has 
$$
\tilde{\varphi}_{\alpha\beta}\circ\varphi_{\beta}(m) =
{\varphi}_{\alpha\beta}\circ\varphi_{\beta}(m) = \varphi_\alpha(m).
$$
Then $\psi$ is a smooth symplectomorphism:
$F_1^{-1}(\Omega'_\alpha\cup\Omega'_\beta)\to
F_2^{-1}(\Omega'_\alpha\cup\Omega'_\beta)$.  Moreover, since
$F_2=\tilde{\varphi}_{\alpha\beta}^*F_2$, one has $ F_1 = \psi^* F_2$.
Thus, in this case, we may let $\phy_{(\alpha,\beta)}=\psi$.

\paragraph{Case (2).} Again we let
$\varphi_{\alpha\beta}=\varphi_\alpha\varphi^{-1}_\beta$, a
symplectomorphism of
$M_{\alpha\beta}:=F_2^{-1}(\Omega_{\alpha}\cap\Omega_{\beta})$ into
itself, such that $F_2\circ\varphi_{\alpha\beta}=F_2$.

By Miranda\--Zung's result \cite[Th.~ 2.1]{miranda-zung} 
(or Dufour\--Molino \cite{dufour-molino} or
Eliasson \cite{eliasson-these}), the foliation above $\Omega_{\alpha}\cap\Omega_{\beta}$
is symplectomorphic to the linear model $(q_1,\xi_2)$ on $\op{T}^*\R\times
\op{T}^*\T^1$, with 
$$q_1(x_1,\,\xi_1,\,x_2,\,\xi_2)=x_1^2+\xi_1^2.$$ 
This means
that there exists a symplectomorphism $\chi:M_{\alpha\beta}\to
\op{T}^*\R\times \op{T}^*\T^1$ and a diffeomorphism $h$ of
$\chi(\Omega_{\alpha}\cap\Omega_{\beta})$ such that
\[
F_2\circ\chi^{-1} = h(q_1,\,\xi_2).
\]

Hence we find that
\[
h(q_1,\,\xi_2)\circ\chi = h(q_1,\,\xi_2)\circ\chi\circ\phy_{\alpha\beta}.
\]
By Claim~\ref{claim:flow}, there exists a smooth function
$\hat{h}_{\alpha\beta}=\hat{h}_{\alpha\beta}(q_1,\xi_2)$ whose
Hamiltonian flow connects
$\hat{\phy_{\alpha\beta}}:=\chi\circ\phy_{\alpha\beta}\circ\chi^{-1}$
to its linear part of at the origin. Now, it is easy to see that any
linear symplectomorphism preserving the moment map $(q_1,\,\xi_2)$ is
the time-1 flow of some linear function
$q_{\alpha\beta}(q_1,\xi_2)$. Since any two functions of $(q_1,\,\xi_2)$
commute, the time\--1 Hamiltonian flow of the half sum
$(\hat{h}_{\alpha\beta} + q_{\alpha\beta})/2\circ(q_1,\, \xi_2)$ is
precisely $\hat{\phy_{\alpha\beta}}$. By naturality, the time-1
Hamiltonian flow of $$h_{\alpha\beta}\circ F_2=(\hat{h}_{\alpha\beta} +
q_{\alpha\beta})/2 \circ (q_1,\,\xi_2)\circ \chi,$$ defined on
$\Omega_\alpha\cap\Omega_\beta$, is precisely $\phy_{\alpha\beta}$.
We now conclude as in case a).
\\~\\
{\em Conclusion:}
It follows from Claim~\ref{claim3} and the second point of
Claim~\ref{claim4} that for any finite subset $A'\subset A$, there
exists a symplectomorphism $\phi_{A'}: F_1^{-1}(\Omega_{A'})\to
F_2^{-1}(\Omega_{A'})$, where
\[
\Omega_{A'}:=\bigcup_{\alpha\in A'}\Omega'_{\alpha},
\]
such that
\[
F_1 = F_2 \circ \phi_{A'}.
\]
Moreover, from the first point of Claim~\ref{claim4} we see that of
$A"\subset A$ is another finite subset containing $A'$, then one can
choose $\phi_{A"}$ such that $(\phi_{A"})|_{\Omega_{A'}}=\phi_{A'}$.

Let $(A_n)_{n\in\mathbb{N}}$ be an increasing sequence of finite
subsets of $A$ whose union is $A$. The projective limit of the
corresponding sequence $(\phi_{A_n})$ is a symplectomorphism
$\phi:M_1\to M_2$ such that $F_1=F_2\circ \phi$, which finally proves
the theorem.

\section{Proof of Miranda\--Zung's lemma for homogeneous maps}
\label{sec:miranda}

Let $\phi \in \Gamma$. Let $g_t \in \op{C}^{\infty}(\R^{2n})$
  the expansion mapping
  $g_t(x_1,\ldots,x_{2n})=t\,(x_1,\ldots,x_{2n})$ for each $t \in
  \R$. Consider the deformation given by the family
  $\{S^{\psi}_t(x)\}_{t \in [0,\,2)}$ defined by
  \begin{eqnarray}
    S_t^{\psi}(x)=\begin{cases}{\displaystyle{1/t\,\,(\psi\circ g_t})(x)} &\quad
      t\in(0,2]\\ \hfill \psi^{(1)}(x) & \quad t=0.
    \end{cases}
  \end{eqnarray}

  This deformation is usually called ``Alexander's trick" and it is
  well\--known to be smooth \cite{hirsch}. We have to check that the
  deformation takes place inside of $\Gamma$, which amounts to
  checking that $h \circ S^{\psi}_t=h$ for all $t \in [0,\,2]$, and
  that it is symplectic, i.e.  $(S^{\psi}_t)^*\omega=\omega$ for all
  $t \in [0,\,2]$.

  In order to check this, let us assume that $t \neq 0$ in what
  follows. Indeed, we have that\footnote{this elementary computation is the only difference with the
  proof of Corollary 3.4 in the article \cite{miranda-zung} of
  Miranda\--Zung, where the index $k_i$ equals $2$ therein because
  $h_i$ is a homogeneous quadratic polynomial of degree $2$. After we became
  aware of this, we asked E. Miranda for confirmation, and we thanks her for it.}

  \begin{eqnarray}
    h_i \circ \frac{1}{t}(\psi \circ g_t)(x)
    =\Big( \frac{1}{t} \Big)^{k_i}h_i\, (\psi \circ g_t)(x)   
    = \frac{1}{t^{k_i}}\,h_i(g_t(x)) = \frac{1}{t^{k_i}}\,h_i(t\,x)  = \frac{1}{t^{k_i}}\,t^{k_i}\,h_i(x), \label{eqa}
  \end{eqnarray}
  where in the first equality we have used that $h_i$ is homogeneous of degree $k_i$,
  in the second that $\psi \in \Gamma$ and hence $h_i \circ \psi=h$, and in the fourth 
  again that  $h_i$ is homogeneous of degree $k_i$.

  On the other hand, $g_t^*\omega=t^2\, \omega$ since $\omega$ is a
  $2$\--form, and therefore
  \begin{eqnarray} \label{eqb} (S^{\psi}_t)^*\omega=1/t \, (\psi \circ
    g_t)^*\omega=\omega.
  \end{eqnarray}
  It follows from (\ref{eqa}) and (\ref{eqb}) that $S^{\psi}_t \in
  \Gamma$ for all $t \in (0,\,2]$. Because the definition given
  by $\{S_t^{\psi}\}_{t \in (0,\,2]}$ is smooth, $S_0^{\psi} \in
  \Gamma$, and in particular $\psi^{(1)} \in \Gamma$, which
  proves (1).

  Let $\Gamma_0$ be be the path\--connected component of the
  identity of $\Gamma$.  To conclude the proof it suffices to show
  that $\psi^{(1)} \circ \psi^{-1} \in \Gamma_0$, because once we
  know this (2) will follow from Theorem 3.2 in Miranda\--Zung
\footnote{Miranda\--Zung's theorem is stated for $h$ with quadratic
components; however their proof works for any smooth $h$ so long as
$\got{g}$ is abelian; they prove that $\got{g}$ is abelian in Sublemma
3.1 for the case of $h$ quadratic in \cite{miranda-zung}; their proof
immediately applies to homogenous $h$, and it is actually true in much
greater generality.}
  \cite{miranda-zung}:
  {\em The exponential $\exp\colon\mathfrak{g}\longrightarrow \mathcal
    G_0$ is a surjective group homomorphism, and moreover there is an
    explicit right inverse given by
$\phi\in\mathcal G_0\longmapsto \int_0^1 X_t \, \DD{}t \in \mathfrak{g}$
\noindent where $X_t\in \mathfrak{g}$ is defined by
$X_t(R_t)=\frac{\DD{} R_t}{\DD{}t}$
for any $\op{C}^1$ path $R_t$ contained in $\Gamma_0$ connecting the
identity to $\phi$.}

As in Miranda\--Zung's proof for the case that $h$ is quadratic
homogeneous, we take $R_t=\psi^{(1)} \circ S_t^{(\psi^{-1})}$, $t \in
[0,\,1]$.  The path $\{R_t\}_{t \in [0,\,1]} \subset \Gamma_0$ is
connects the identity to $\psi^{(1)} \circ S_t^{\psi^{-1}}$.  Hence by
the result above there exists a vector field $X$ whose time\--1 map is
$\psi^{(1)} \circ \psi^{-1}$ and a unique Hamiltonian mapping $\Psi$
which vanishes at the origin such that $X=X_{\Psi}$, and (2) follows.
\\
\\
{\em Acknowledgements}. A. Pelayo thanks the Department of Mathematics 
at MIT and Chair Sipser for the excellent resources he was
provided with while he was a postdoc there.

\noindent
\\
\\
Alvaro Pelayo\\
Massachusetts Institute of Technology\\
MIT room 2\--282\\
77 Massachusetts Avenue\\
Cambridge MA 02139-4307 (USA)
\\
{\em and}
\\
University of California\---Berkeley \\
Mathematics Department \\
970 Evans Hall 3840 \\
Berkeley, CA 94720-3840, USA.\\
{\em E\--mail}: {apelayo@math.mit.edu}

\bigskip\noindent

\noindent
San V\~u Ng\d oc\\
Universit\'e de Rennes 1\\
Math\'ematiques Unit\'e\\
Campus de Beaulieu\\
35042 Rennes cedex (France)\\
{\em E-mail:} {san.vu-ngoc@univ-rennes1.fr}

\end{document}